\newenvironment{acknowledgements}{%
  \begin{abstract}
}{%
  \end{abstract}
}
\newtheorem{thm}{Theorem}[section]
\newtheorem*{thm*}{Theorem}
\newtheorem{definition}[thm]{Definition}
\newtheorem{cor}[thm]{Corollary}
\newtheorem{lem}[thm]{Lemma}
\newtheorem{prop}[thm]{Proposition}
\newcommand{\one}{\mathbf{1}}
\newcommand{\N}{\mathbb{N}}
\newcommand{\R}{\mathbb{R}}
\newcommand{\PP}{\mathbb{P}}
\newcommand{\norm}[1]{\left\|#1\right\|}
\newcommand{\dvg}{\mathord{{\rm div}}}
\renewcommand{\O}{\Omega}
\theoremstyle{remark}
\newtheorem{rmk}[thm]{Remark}
\numberwithin{equation}{section}
\title[Zero-noise and LDP for stochastic transport]{Zero-noise selection and Large Deviations in $L^\infty_t L^p_x$ for the stochastic transport equation beyond DiPerna-Lions}
\author[G. Crippa]{Gianluca Crippa}
\address{Departement Mathematik und Informatik, Universit\"at Basel, Spiegelgasse 1, CH-4051 Basel, Switzerland.} 
\email{gianluca.crippa@unibas.ch}
\author[E. Luongo]{Eliseo Luongo}
\address{Fakultät für Mathematik, Universität Bielefeld, 33501 Bielefeld, Germany} 
\email{eluongo@math.uni-bielefeld.de  }
\author[U. Pappalettera]{Umberto Pappalettera}
\address{Fakultät für Mathematik, Universität Bielefeld, 33501 Bielefeld, Germany} 
\email{upappale@math.uni-bielefeld.de}
\keywords{}
\date\today
\begin{document}

\begin{abstract}
We consider $L^\infty_t L^p_x$ solutions of the stochastic transport equation with drift in $L^\infty_t W^{1,q}_x$. We show strong existence and pathwise uniqueness of solutions in a regime of parameters $p,q$ for which non-unique weak solutions of the deterministic transport equation exist.
When the intensity of the noise goes to zero, we prove that the solutions of the stochastic transport equation converge to the unique renormalized solution of the transport equation in the sense of DiPerna-Lions.
Furthermore, we show that the convergence is governed by a Large Deviations Principle in the space $L^\infty_t L^p_x$.
Since the space $L^\infty_t L^p_x$ is not separable, the weak convergence approach to Large Deviations by Budhiraja, Dupuis, and Maroulas is not directly applicable.
\end{abstract}

\maketitle


\section{Introduction}
This work is devoted to the study of random perturbations of the transport equation 
\begin{align} \label{eq:transport}
\begin{cases}
\partial_t \rho
+
b \cdot \nabla \rho
=
0,
\\
\rho|_{t=0} = \rho_0 \in L^p_x,
\end{cases}
\end{align}
posed on the domain $[0,T] \times \R^d$, for some $T<\infty$ and $d \geq 2$.
Here we consider the case where the unknown $\rho$ belongs to the space $L^\infty_tL^p_x$, and $b$ is a given velocity field of class $L^\infty_t W^{1,q}_x$ for some exponents $p,q \in [1,\infty]$. Suitable assumptions on $\dvg \, b $ are also imposed.
The Cauchy problem \eqref{eq:transport} has been addressed thoroughly in the seminal paper \cite{diperna1989ordinary} by DiPerna and Lions (see also \cite{Am04}).
According to the theory developed therein, weak solutions to \eqref{eq:transport} are unique when $1/p + 1/q \leq 1$. 
However, to give meaning to weak solutions, it is actually sufficient to require that the product $b \rho$ is well-defined, when integrated against compactly supported test functions. 
By Sobolev embedding, $W^{1,q}_x \subset L^{q^*}_x$, where $1/q^* = 1/q-1/d$. In particular, $b \rho \in L^1_x$ whenever $1/q+1/p \leq 1+1/d$.
In this regime weak solutions exist but might not be unique.
In fact, building upon a research line initiated by Modena and Székelyhidi Jr. in \cite{MoSz18}, Brué, Colombo and Kumar have constructed in \cite{BrCoKu} a vector field $b \in L^\infty_t W^{1,q}_x \cap L^\infty_tL^{(p-1)/p}_x$ leading to non-unique weak solutions to \eqref{eq:transport} of class $L^\infty_t L^p_x$ for every $p,q$ satisfying $(d-1)/dp + 1/q > 1$.
Notice that the $L^{(p-1)/p}_x$ space integrability of $b$ in \cite{BrCoKu} guarantees that the product $b\rho$ is well-defined.

In the case $1/q+1/p > 1+1/d$, weak solutions to \eqref{eq:transport} are not generally defined as the product $b\rho$ is not.
One of the key insights of \cite{diperna1989ordinary} is that, for arbitrary exponents $p,q \in [1,\infty]$, one can make sense of equation \eqref{eq:transport} by introducing the notion of renormalized solutions, namely assuming that for every admissible $\beta \in C_b(\R)\cap C^1(\R),$ cf. \cite[Page 521]{diperna1989ordinary}, the following Cauchy problem is satisfied in distributional sense 
\begin{align*} 
\begin{cases}
\partial_t \beta(\rho)
+
b \cdot \nabla \beta(\rho)
=
0,
\\
\beta(\rho)|_{t=0} = \beta(\rho_0),
\end{cases}
\end{align*}
i.e. when testing against a test function $\phi \in C^\infty_c([0,T) \times \R^d)$.
Quite importantly, renormalized solutions are weak solutions when the product $b\rho$ is well-defined, and renormalized solutions exist and are unique in the whole range of exponents $p,q \in [1,\infty ]$, cf. \cite[Theorem II.3]{diperna1989ordinary}. This makes the notion of renormalized solutions a very satisfactory selection criterion among non-unique solutions of \eqref{eq:transport}. 

On the other hand, there are several regularizing procedures that restore uniqueness of solutions when applied to \eqref{eq:transport}. 
For instance, well-posedness of \eqref{eq:transport} can be improved by regularizing the initial condition $\rho_0$, and/or adding a diffusivity term $\nu \Delta$, $\nu >0$, to the right-hand-side of the equation. 
Furthermore, building upon the theory developed in \cite{CrSp15} and \cite{CiCrSp21}, in \cite{BoCiCr22} it is proved that this procedure selects the unique renormalized solution of \eqref{eq:transport} in the vanishing-diffusivity limit $\nu \downarrow 0$, despite the fact that in this regime multiple weak solutions may exist. 

In this paper we are interested in a different regularization for the transport equation \eqref{eq:transport}, that is obtained with a random perturbation of the equation by a rough transport noise of H\"older space regularity $\alpha \in (0,1/2)$ and intensity $\varepsilon>0$:
\begin{align} \label{eq:stoch_transport}
\begin{cases}
\partial_t \rho^\varepsilon
+
b \cdot \nabla \rho^\varepsilon
+
\varepsilon \circ\, \partial_t W \cdot \nabla \rho^\varepsilon
=
0,
\\
\rho^\varepsilon|_{t = 0}
= \rho^\varepsilon_0 \in L^p_x,
\end{cases}
\end{align}
where $W:=\sqrt{\mathcal{Q}}\mathcal{W}$, $\mathcal{Q}$ is the covariance operator of the noise $W$, and $\mathcal{W}$ is a cylindrical Wiener process in the space $\mathbb{H}:=\{u \in (L^2_x)^d \,:\; \dvg \,u = 0\}$ defined on a given filtered probability space $(\Omega,\mathcal{F},\{ \mathcal{F}_t \}_{t \geq 0},\PP)$ with complete and right-continuous filtration, see \cite[Section 4.1.2]{DaPZab}.
The stochastic integral in \eqref{eq:stoch_transport} is formally meant in the sense of Stratonovich.
We assume that the noise is space-homogeneous and the matrix $Q(x,y)=Q(x-y) := \mathbb{E}[W_1(x) \otimes W_1(y)]$, $x,y \in \R^d$, is explicitly given by 
\begin{align*}
Q(z) = Z_Q \int_{\R^d} \left( I_d - \frac{\xi \otimes \xi}{|\xi|^2}  \right) \frac{e^{i z \cdot\xi}}{(1+|\xi|^2)^{d/2+\alpha}} d\xi,\qquad 
0 <\alpha <\frac{1}{2},
\end{align*}
where $Z_Q>0$ is a renormalizing constant such that $Q(0) = 2I_d$. This noise structure was introduced by Kraichnan in \cite{Kr68} to describe turbulent advection of passive scalars and has been popularized by physicists since then, cf. the lecture notes \cite{Ga02}.

Under this assumption, there exist divergence-free vector fields $\{\sigma_k\}_{k \in \N} \subset C^\infty(\R^d,\R^d) \cap L^2 (\R^d,\R^d)$ and i.i.d. Wiener processes  $\{W^k\}_{k \in \N}$ on $(\Omega,\mathcal{F},\{\mathcal{F}_t\}_{t \geq 0},\PP)$ such that $W$ can be represented as $W_t(x) = \sum_{k \in \N} \sigma_k(x) W^k_t$, and $Q(x-y)=\sum_{k \in \N}\sigma_k(x) \otimes \sigma_k(y)$. 
Stochastic It\=o integrals with respect to $W$ are well-defined continuous local martingales for every $\{\mathcal{F}_t\}_{t \geq 0}$-progressively measurable process $f$ such that $\int_0^T \langle Q \ast f_s,f_s \rangle ds < \infty$ $\PP$-almost surely, where $\langle \cdot,\cdot \rangle$ denotes the $L^2_x$ inner product, see for instance \cite[Lemma 2.8]{GaLu23+}.

Analytically weak, probabilistically strong solutions to \eqref{eq:stoch_transport} are rigorously defined as $\{\mathcal{F}_t\}_{t \geq 0}$-progressively measurable processes $\rho^\varepsilon\in L^{\infty}_\omega L^{\infty}_t L^2_x$ with continuous paths with values in $H^{-s}_x$ for some $s>0$, satisfying the stochastic equation in It\=o sense
\begin{align} \label{eq:stoch_trans_Ito}
d \rho^\varepsilon 
+ 
b \cdot \nabla \rho^\varepsilon dt
+
\varepsilon \sum_{k \in \N} \sigma_k \cdot \nabla \rho^\varepsilon dW^k_t
=
\varepsilon^2 \Delta \rho^\varepsilon dt,
\end{align}
when testing against a test function $\phi \in C^\infty_c(\R^d \times [0,T))$, or equivalently $\phi \in C^\infty_c(\R^d)$. We refer to \autoref{prop_well_posed} below and \autoref{def:weak_sol} for details. Let us point out that the term $\varepsilon^2 \Delta \rho^\varepsilon$ is a consequence of formally rewriting the Stratonovich integrals in It\=o form and we will always consider \eqref{eq:stoch_trans_Ito} in our rigorous analysis.
For technical convenience, we shall consider the following more general equation, with a  diffusivity parameter $\kappa \in [0,1)$ and a drift $g$ taking values in the Cameron-Martin space associated to our noise (more details in next subsection): 
\begin{align} \label{eq:rho_general}
        d \rho^{\varepsilon,\kappa}
        +
        (b+g) \cdot \nabla \rho^{\varepsilon,\kappa} dt
        +
        \varepsilon \sum_{k \in \N} \sigma_k \cdot\nabla \rho^{\varepsilon,\kappa} dW^k_t
        =
        (1+\kappa)\varepsilon^2 \Delta \rho^{\varepsilon,\kappa} dt.
    \end{align}

Before presenting the main results of this work, we give a preliminary proposition establishing well-posedness of \eqref{eq:rho_general}, which is of independent interest.

\begin{prop}\label{prop_well_posed}
Fix $\alpha \in (0,1/2)$ and assume $\rho_0 \in L^2_x \cap L^p_x$ with $p \in [1,\infty]$, $b \in L^\infty_t W^{1,q}_x$ with $\frac{d}{2(1-\alpha)}<q \leq 2$ and $\dvg \,b \in L^1_t L^\infty_x \cap L^{\infty}_t H^{\vartheta}_x$ for some $\vartheta>0$, and let $g \in L^\infty_\omega L^2_t H^{d/2+\alpha}_x$ be a divergence-free, $\{\mathcal{F}_t\}_{t \geq 0}$-progressively measurable process. 
Let $\varepsilon \in (0,1)$ and $\kappa \in [0,1)$.
Then:
\begin{itemize}
\item 
Pathwise uniqueness for \eqref{eq:rho_general} holds in the class of analytically weak solutions of class $L^{\infty}_\omega L^{\infty}_t L^2_x$ with continuous paths in $H^{-s}_x$, for some $s>0$.
\item 
There exists a probabilistically strong, analytically weak solution to \eqref{eq:rho_general} of class 
\begin{align*}
\rho \in L^\infty_\omega L^\infty_t (L^2_x \cap L^p_x) \cap L^2_\omega L^2_t H^{1-\alpha-\delta}_x \cap L^n_\omega C^{\gamma}_t H^{{-\sigma}}_x,
\end{align*}
for arbitrary $\delta \in (0,\alpha)$, $n \in [2,\infty)$, $\gamma <1/2$, and $\sigma>d/2+ 1$.
Moreover, if $p \in (1,\infty)$ then $\rho$ has weakly continuous trajectories in $L^p_x$ $\PP$-almost surely.
\end{itemize}
\end{prop}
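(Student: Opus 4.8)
The plan is to establish existence via a vanishing-viscosity/Galerkin-type compactness argument combined with a martingale solution construction, and then to upgrade to a probabilistically strong solution by pathwise uniqueness and the Gy\"ongy--Krylov argument. First I would regularize: mollify the drift $b$, truncate the noise to finitely many modes $\sigma_1,\dots,\sigma_N$, and mollify the initial datum, obtaining a sequence of solutions $\rho^{N}$ to the regularized SPDE which are classical in space and for which standard It\^o calculus applies. The key \emph{a priori} estimates come from testing the equation against $\rho^N$ itself: since the $\sigma_k$ are divergence-free, the transport term $\sum_k \sigma_k\cdot\nabla\rho^N\, dW^k$ contributes nothing to $\tfrac12 d\|\rho^N\|_{L^2_x}^2$ in expectation, the It\^o correction exactly cancels the $\varepsilon^2(1+\kappa)\Delta$ term up to the Kraichnan renormalization $Q(0)=2I_d$, leaving only the contribution of $\dvg(b+g)$, which is controlled because $\dvg b\in L^1_tL^\infty_x$ and $g$ is divergence-free. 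This yields the uniform bound in $L^\infty_\omega L^\infty_t L^2_x$. The $L^p_x$ bound is obtained analogously, testing (after renormalization \`a la DiPerna--Lions on the regularized level, which is licit since $\rho^N$ is smooth) against $\beta(\rho^N)$ with $\beta$ approximating $|\cdot|^p$; one must be slightly careful with the It\^o term, but again homogeneity of the noise and $Q(0)=2I_d$ make the second-order term cancel the Laplacian contribution. The parabolic regularity $\rho\in L^2_\omega L^2_t H^{1-\alpha-\delta}_x$ follows from the extra coercivity provided by the $\varepsilon^2\Delta$ term together with a commutator estimate exploiting the H\"older regularity $\alpha$ of the noise, in the spirit of the Kraichnan-model literature; the condition $q>\tfrac{d}{2(1-\alpha)}$ is exactly what is needed to absorb the drift term into this gain of derivatives. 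Finally, the time regularity $\rho\in L^n_\omega C^\gamma_t H^{-\sigma}_x$ comes from reading off the equation: each term on the right is bounded in a negative Sobolev space uniformly, and the stochastic integral is estimated by Burkholder--Davis--Gundy, giving fractional-in-time H\"older continuity with any exponent $\gamma<1/2$.

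Next I would pass to the limit $N\to\infty$. Combining the uniform bounds with the time-regularity estimate and the compact embeddings $H^{1-\alpha-\delta}_x\embed\embed H^{-\sigma}_x$ on bounded domains (using a cutoff and a diagonal argument to handle $\R^d$), an Aubin--Lions--Simon argument gives tightness of the laws of $\rho^N$ in, say, $C_t H^{-\sigma}_x\cap (L^2_t L^2_{x,\mathrm{loc}},\text{weak})$. I would then invoke the Skorokhod representation theorem (or Jakubowski's extension, since the relevant spaces are not all Polish) to obtain, on a new probability space, almost-surely convergent copies $\tilde\rho^N\to\tilde\rho$, and pass to the limit in the weak formulation: the linear terms converge by the strong $L^2_{t,x,\mathrm{loc}}$ convergence, and the martingale term is identified by showing the bracket converges, yielding a martingale solution. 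The weak-$\star$ lower semicontinuity of the norms transfers the $L^\infty_\omega L^\infty_t(L^2_x\cap L^p_x)$ and $L^2_\omega L^2_t H^{1-\alpha-\delta}_x$ bounds to $\tilde\rho$.

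To prove pathwise uniqueness (the first bullet, also needed to upgrade the martingale solution to a strong one), I would take the difference $\rho_1-\rho_2$ of two solutions with the same data. The obstacle is that a solution of class $L^\infty_t L^2_x$ with only $H^{-s}_x$-continuous paths is too rough to test the equation against itself directly. I would therefore mollify in space, derive the equation for $\rho_i^\delta:=\rho_i\ast\theta_\delta$, which satisfies the same equation up to a commutator $[b+g,\theta_\delta\ast]\cdot\nabla\rho_i$, apply It\^o's formula to $\|\rho_1^\delta-\rho_2^\delta\|_{L^2_x}^2$ (valid now since the regularized process is smooth enough), use the noise cancellation and the $\dvg(b+g)$ bound as above, and let $\delta\to0$ invoking the DiPerna--Lions commutator lemma (which applies because $b\in W^{1,q}_x$ and $\rho_i\in L^{q'}_{x,\mathrm{loc}}$ by interpolation between $L^2_x$ and $L^p_x$ — here again $1/p+1/q$ plays a role, and the regularizing effect of $\varepsilon^2\Delta$ giving $\rho_i\in L^2_t H^{1-\alpha-\delta}_x$ provides the extra integrability to close the commutator even outside the DiPerna--Lions range) to conclude $\|\rho_1-\rho_2\|_{L^\infty_t L^2_x}=0$. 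With pathwise uniqueness and the existence of a martingale solution in hand, the Gy\"ongy--Krylov characterization of convergence in probability upgrades the construction to a probabilistically strong solution on the original filtered space.

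The main obstacle I anticipate is the commutator/renormalization step: one must simultaneously handle the singular transport noise (whose coefficients $\sigma_k$ are only in $L^2_x$ with H\"older regularity $\alpha<1/2$, so the series $\sum_k\sigma_k\cdot\nabla$ must be treated via the Kraichnan structure rather than term by term) and the low integrability of $\rho$ relative to $b$ (we are \emph{beyond} the DiPerna--Lions regime $1/p+1/q\le1$), closing the estimates using \emph{only} the parabolic smoothing $\varepsilon^2\Delta$. Getting the quantitative commutator bound to scale correctly against the gain of $1-\alpha-\delta$ derivatives, uniformly in the regularization parameters, is the delicate point; the restriction $q>\tfrac{d}{2(1-\alpha)}$ and $q\le2$ is dictated precisely by this balance. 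The final claim on weak $L^p_x$-continuity of trajectories for $p\in(1,\infty)$ then follows from the $L^\infty_t L^p_x$ bound together with $C_t H^{-\sigma}_x$ continuity and a standard density/reflexivity argument.
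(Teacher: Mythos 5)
Your existence scheme (regularization, $L^\infty_t(L^2_x\cap L^p_x)$ bounds by testing against $p\rho|\rho|^{p-2}$, a Kraichnan-type gain of $1-\alpha-\delta$ derivatives, Burkholder--Davis--Gundy time regularity, compactness) matches the paper's in outline, although the paper avoids your Skorokhod/martingale-solution/Gy\"ongy--Krylov detour entirely: the approximation there is a vanishing diffusivity $\kappa\downarrow 0$ with the full noise kept fixed on the original probability space, so by linearity weak-$*$ limits of the approximations already produce a probabilistically strong solution. The paper also handles the random drift $g$ by a separate Girsanov transformation, absorbing $g\in L^2_t\mathcal{H}_0$ into the noise and reducing everything to the case $g=0$; you instead lump $g$ with $b$ throughout, which is problematic because $g$ is random, adapted, and only $L^2$ in time, so it cannot be fed into the deterministic drift estimates on the same footing as $b\in L^\infty_tW^{1,q}_x$.

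The genuine gap is in your uniqueness argument. First, the uniqueness class in the statement is $L^\infty_\omega L^\infty_t L^2_x$ with continuous paths in $H^{-s}_x$: solutions in this class are \emph{not} a priori known to lie in $L^2_tH^{1-\alpha-\delta}_x$, yet your commutator argument explicitly invokes that extra regularity to close the estimate outside the DiPerna--Lions range, so as written you would only obtain uniqueness in the smaller, more regular class. Second, and more fundamentally, the pathwise It\^o-plus-mollification strategy requires controlling the commutators between the mollifier and the noise vector fields $\sigma_k\cdot\nabla$, summed over $k$; the Kraichnan fields have only H\"older regularity $\alpha<1/2$, so the DiPerna--Lions commutator lemma does not apply to them, and the exact cancellation between $\varepsilon^2\sum_k\|\sigma_k\cdot\nabla\rho^\delta\|_{L^2_x}^2$ and $2\varepsilon^2\langle\rho^\delta,\Delta\rho^\delta\rangle$ is destroyed by commutator errors you have no tool to estimate. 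You flag this obstacle but do not resolve it. The paper's proof sidesteps it by working with second moments in Fourier variables, $a_t(\xi)=\mathbb{E}[|\hat\rho_t(\xi)|^2]$, following Galeati--Grotto--Maurelli: taking expectations converts the singular stochastic commutator into an explicit deterministic integral operator built from the Fourier transform of $Q$, whose action on $\psi(\xi)=\langle\xi\rangle^{-2\delta}$ is coercive of order $|\xi|^{2-2\alpha-2\delta}$ at high frequency; the drift is absorbed by paraproduct estimates (this is exactly where $q>\frac{d}{2(1-\alpha)}$ enters), and a Gronwall argument for $\mathbb{E}\|\rho_t\|^2_{H^{-\delta}_x}$ simultaneously yields uniqueness in the rough class and the $L^2_\omega L^2_tH^{1-\alpha-\delta}_x$ regularity.
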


The integrability assumptions $\rho_0 \in L^2_x$ and $b \in L^\infty_t W^{1,q}_x$ with $q>\frac{d}{2(1-\alpha)}$ above play a key role in the proof of the Sobolev regularity of solutions, which is absent in the DiPerna-Lions theory (cf. \cite{albertiCrippaMazzuccato}) and crucial in our proofs. In fact, we do not claim our result is sharp in term of sole well-posedness of \eqref{eq:rho_general}, but this falls outside the scope of this paper. 
On the other hand, the $L^p_x$ integrability is only there for the sake of generality. 
The proof of \autoref{prop_well_posed}, given in the Appendix, is based on a technical adaptation of the computations performed by Galeati, Grotto, and Maurelli in \cite{GaGrMa24+} for the homogeneous Sobolev norm $L^2_\omega L^2_t \dot{H}^{1-\alpha-\delta}_x$, without assuming their condition $\rho_0 \in L^2_x \cap \dot{H}^{-s}_x$ for some $s>0$.
The proof of \autoref{prop_well_posed}, given in the Appendix, is based on a technical adaptation of the computations performed by Galeati, Grotto, and Maurelli in \cite{GaGrMa24+} for the homogeneous Sobolev norm $L^2_\omega L^2_t \dot{H}^{1-\alpha-\delta}_x$, without assuming their condition $\rho_0 \in L^2_x \cap \dot{H}^{-s}_x$ for some $s>0$.

It is worth comparing the range $\frac{d}{2(1-\alpha)}<q \leq 2$ of the proposition above with the well-posedness regime of the deterministic transport equation \eqref{eq:transport}.
First, notice that for $p=2$ and $q\geq 2$, weak solutions of \eqref{eq:transport} are renormalized, and hence unique. For our purposes, the most interesting and challenging case is when \eqref{eq:transport} admits non-unique solutions. In particular, $q \leq 2$ is not really a limitation that we are interested to drop at the present moment.
In fact, the regime $p=2$ and $\frac{d}{2(1-\alpha)}<q \leq 2$ actually comprehends cases where weak solutions of \eqref{eq:transport} are not unique, see \cite[Theorem 1.1]{BrCoKu}. 
Thus, taking e.g. $d=2$ and $1/(1-\alpha) < q < 4/3$, one has both non-unique $L^\infty_t L^2_x$ solutions of \eqref{eq:transport}, and unique solutions of the stochastic transport equation \eqref{eq:rho_general}.
We can therefore interpret \autoref{prop_well_posed} as a \emph{regularization by noise} phenomenon, see \cite{flandoli_book}, and the references in \autoref{ssec:biblio}. 

\subsection{Main results}\label{sec_main_res}
In view of the discussion above, we can look at \eqref{eq:stoch_transport} as a regularization mechanism for \eqref{eq:transport}. Indeed, \autoref{prop_well_posed} guarantees well-posedness of \eqref{eq:stoch_transport} for any $\varepsilon >0$, despite the possible non-uniqueness of solutions to the deterministic transport equation \eqref{eq:transport} for $\max\{2,p\}<q'$, the H\"older conjugate of $q$. For example, as explained above, non-uniqueness is true when $p=2$, see \cite[Theorem 1.1]{BrCoKu}.
It is therefore natural to ask whether the regularization mechanism \eqref{eq:stoch_transport} selects a unique weak solution of \eqref{eq:transport} as the noise intensity $\varepsilon \downarrow 0$, and if this is the case, how fast the convergence is.
In general, identification of the \emph{zero-noise} limit of ill-posed deterministic systems is a difficult problem.
Very few examples are known, mostly pivoting around the Peano non-uniqueness phenomenon (see Bafico and Baldi \cite{BaBa81}, its revisitations \cite{DeFl14, delarue2019zero}, and \cite{AtFl09} where notably the zero-noise limit is different from the vanishing-viscosity limit), and for a system of two collapsing Vlasov-Poisson point charges \cite{delarue2014noise}. We also point out \cite{grotto2024zero}, where a system of collapsing Euler point vortices from \cite{MaPu,grotto2022burst} is investigated by numerical simulations.
Let us, lastly, mention the recent preprint \cite{pitcho2025zero}.

Here we are able to identify the zero-noise limit of \eqref{eq:stoch_transport}. In order to state our results, we need some preliminary definitions.

In what follows we denote $\tilde{H}^s_x$, $s \in \R$, the weighted Sobolev space on $\R^d$, that is the closure of compactly supported smooth functions under the norm $\| \phi \|_{\tilde{H}^s_x} := \| \phi w \|_{H^s_x}$, where the weight $w$ is defined as $w(x) := (1+|x|^2)^{-d/2-1}$. 
Let us also introduce the Polish spaces
\begin{align*}
\tilde{H}^{s-}_x := \bigcap_{n \in \N} \tilde{H}^{s-1/n}_x,
\quad
\mbox{with distance }\,
d_{\tilde{H}^{s-}}(u,v):=\sum_{n\geq 1}\frac{1}{2^n}\left(1\wedge \norm{u-v}_{\tilde{H}^{s-1/n}_x}\right).
\end{align*}
In order to simplify the notation, we will write $\tilde{H}_x^-$ in place of $\tilde{H}_x^{0-}$. 

Hereafter we fix $p \in (1,\infty)$ and we denote $B$ the closed unit ball in $L^2_x \cap L^p_x$ endowed with the strong topology, $\mathcal{B}$ the closed ball of radius $\exp(\|\dvg \ b\|_{L^1_t L^\infty_x})$ in $L^2_x\cap L^p_x$ endowed with the weak topology (which is metrizable with a distance $d_\mathcal{B}$), and the spaces
\begin{align*}
\mathcal{E}
:=
C_t \tilde{H}^{-}_x
\cap 
C_t \mathcal{B},
\qquad
\mathscr{E} := L^\infty_t (L^2_x \cap  L^p_x)\cap \mathcal{E}.
\end{align*}
We endow $\mathcal{E}$ with its natural distance, denoted by $d_{\mathcal{E}}$, and $\mathscr{E}$ with the $L^{\infty}_t(L^2_x\cap L^p_x)$ distance, denoted hereafter by $d_{\mathscr{E}}$ for notational simplicity.
As elements in $\mathscr{E}$ have weakly continuous trajectories in $L^2_x\cap L^p_x$ and the $L^2_x$ and $L^p_x$ norms are lower-semicontinous with respect to the weak convergence, it is easy to see that $d_\mathscr{E}(u,v) = \sup_{t \in [0,T]} \| u_t-v_t \|_{L^2_x}+\sup_{t \in [0,T]} \| u_t-v_t \|_{L^p_x}$, and therefore $d_\mathscr{E}$ is bi-Lipschitz equivalent to the natural distance $d_\mathscr{E}+d_\mathcal{E}$ on $\mathscr{E}$. Even though the sets $\mathcal{E}$ and $\mathscr{E}$ are the same, they are very different from the topological viewpoint. 
The metric space $(\mathcal{E},d_\mathcal{E})$ is Polish. The metric space $(\mathscr{E},d_\mathscr{E})$ is complete but fails to be separable. This lack of separability is a major difficulty in our setting, as we will make clear later.

The first result of this work  provides a positive answer to the question of convergence toward a unique weak solution of the transport equation \eqref{eq:transport}. The limit is the unique renormalized solution of \eqref{eq:transport} given by \cite[Theorem II.3]{diperna1989ordinary}, with convergence in probability. More precisely, we have:
\begin{thm} \label{thm:convergence}
Let $b \in L^\infty_t W^{1,q}_x$ with $\dvg \,b \in L^1_t L^\infty_x \cap L^{\infty}_t H^{\vartheta}_x$ for some $\frac{d}{2(1-\alpha)}< q \leq 2$ and $\vartheta>0$.
Fix any $p \in (1,\infty)$.
Then for every $\rho_0^\varepsilon \to \rho_0$ in $B$ as $\varepsilon \downarrow 0$, the unique solution of the stochastic transport equation \eqref{eq:stoch_transport} converges in probability, as $\mathcal{E}$-valued random variables, to the unique renormalized solution $\rho$ of \eqref{eq:transport} with initial condition $\rho_0$. 
If in addition $\dvg \, b = 0$, then $\rho^\varepsilon$ converges in probability to $\rho$ as $\mathscr{E}$-valued random variables.
\end{thm}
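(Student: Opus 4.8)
The plan is to prove convergence in probability of $\rho^\varepsilon$ to $\rho$ by a compactness-plus-uniqueness argument, exploiting the uniform-in-$\varepsilon$ bounds provided by \autoref{prop_well_posed} (with $g \equiv 0$, $\kappa = 0$) together with the fact that the limit is characterized uniquely by the DiPerna–Lions renormalization property. First I would fix a subsequence of $\varepsilon \downarrow 0$ and, using the a priori bounds $\rho^\varepsilon \in L^\infty_\omega L^\infty_t (L^2_x \cap L^p_x)$, $\rho^\varepsilon \in L^2_\omega L^2_t H^{1-\alpha-\delta}_x$, and $\rho^\varepsilon \in L^n_\omega C^\gamma_t H^{-\sigma}_x$, establish tightness of the laws of $(\rho^\varepsilon, \varepsilon \mathcal{W})$ on a suitable path space: by Aubin–Lions–Simon type compactness, the interpolation between the positive Sobolev regularity in $x$ and the Hölder regularity in $t$ (with values in a negative space) gives compact embedding into $C_t \tilde H^{-}_x$, while the uniform $L^\infty_t(L^2_x \cap L^p_x)$ bound gives tightness of the $L^2_x \cap L^p_x$-valued weakly continuous trajectories, i.e.\ tightness in $\mathcal{E}$. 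One can then apply the Skorokhod representation theorem on the Polish space $\mathcal{E} \times C_t \mathbb{H}$ to obtain, along a further subsequence, a new probability space with random variables $(\tilde\rho^\varepsilon, \tilde W^\varepsilon) \to (\tilde\rho, 0)$ almost surely, since $\varepsilon \mathcal{W} \to 0$.

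The second step is to identify the limit $\tilde\rho$ as \emph{the} renormalized solution of \eqref{eq:transport}. Passing to the limit in the weak (Itô) formulation \eqref{eq:stoch_trans_Ito} is routine for the linear terms, because the strong convergence in $C_t \tilde H^{-}_x$ and the uniform $L^\infty_t L^2_x$ bound let one pass to the limit in $\int_0^t \langle \rho^\varepsilon, b \cdot \nabla \phi\rangle\,ds$ (here one uses $b \in L^\infty_t W^{1,q}_x$, $q \le 2$, and the compactly supported test function), and the martingale and Laplacian terms vanish as $\varepsilon \to 0$ using the It\^o isometry and the $\varepsilon^2$ prefactor. So the limit $\tilde\rho$ is a weak $L^\infty_t L^2_x$ solution of \eqref{eq:transport}. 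To upgrade to \emph{renormalized} solution, I would run the same limit procedure on a renormalized version of \eqref{eq:stoch_trans_Ito}: for $\beta$ smooth and bounded, $\beta(\rho^\varepsilon)$ solves a transport equation with the same noise plus commutator-type remainders of order $\varepsilon^2$ from the $\Delta$ term, all of which vanish; the key point is that the deterministic commutator $b \cdot \nabla \beta(\rho^\varepsilon) - \beta'(\rho^\varepsilon) b\cdot\nabla\rho^\varepsilon$ is automatically controlled here, since the uniform $H^{1-\alpha-\delta}_x$ bound gives enough spatial regularity on $\rho^\varepsilon$ to make the chain rule exact in the limit. Hence $\tilde\rho$ is renormalized, and by \cite[Theorem II.3]{diperna1989ordinary} it equals the unique renormalized solution $\rho$ with datum $\rho_0$ (note $\rho_0^\varepsilon \to \rho_0$ strongly in $L^2_x \cap L^p_x$ passes through). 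Since the limit is deterministic and independent of the subsequence, a standard Gyöngy–Krylov argument upgrades the subsequential weak convergence to convergence in probability of the original family $\rho^\varepsilon \to \rho$ as $\mathcal{E}$-valued random variables.

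For the final claim, assume $\dvg\, b = 0$. Now the $L^2_x$ and $L^p_x$ norms are conserved along the flow: $\|\rho^\varepsilon_t\|_{L^2_x} = \|\rho_0^\varepsilon\|_{L^2_x}$ and likewise in $L^p_x$ for \eqref{eq:stoch_trans_Ito} (more precisely $\|\rho^\varepsilon_t\|_{L^p_x} \le \|\rho_0^\varepsilon\|_{L^p_x}$, with equality for $p=2$; for general $p$ one uses that $\Delta$ dissipates $\int |\rho^\varepsilon|^p$ but the mass is still controlled — actually for the divergence-free renormalized limit $\rho$ one has exact conservation $\|\rho_t\|_{L^p_x} = \|\rho_0\|_{L^p_x}$), and similarly for the deterministic limit $\rho$. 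Combined with the already-established weak-in-$x$, continuous-in-$t$ convergence $\rho^\varepsilon_t \rightharpoonup \rho_t$ and the convergence of norms $\|\rho^\varepsilon_t\|_{L^2_x} \to \|\rho_0\|_{L^2_x} = \|\rho_t\|_{L^2_x}$ (and similarly in $L^p_x$), the Radon–Riesz property of $L^2_x$ and $L^p_x$ ($1 < p < \infty$, uniformly convex) upgrades weak convergence to strong convergence in $L^2_x \cap L^p_x$ for each fixed $t$. To get uniformity in $t$, i.e.\ convergence in $d_\mathscr{E}$, I would argue by contradiction using the equicontinuity coming from the uniform $C^\gamma_t H^{-\sigma}_x$ bound together with the norm control: if $\sup_t \|\rho^\varepsilon_t - \rho_t\|_{L^2_x}$ did not go to zero in probability, one could extract times $t_\varepsilon \to t_*$ along which the $L^2_x$ gap stays bounded below, yet $\rho^\varepsilon_{t_\varepsilon} \rightharpoonup \rho_{t_*}$ with matching norms, a contradiction.

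\textbf{Main obstacle.} I expect the hardest part to be the last step — promoting pointwise-in-time strong convergence to \emph{uniform}-in-time strong convergence in the non-separable space $\mathscr{E}$ — because the usual Skorokhod/tightness machinery does not see the $L^\infty_t(L^2_x \cap L^p_x)$ topology. The argument must genuinely exploit the conservation of the $L^2_x$ (and $L^p_x$) norms in the divergence-free case, which is exactly what fails when $\dvg\, b \neq 0$ and explains why the second conclusion requires that extra hypothesis; making the contradiction argument rigorous requires care in combining the temporal equicontinuity in a negative Sobolev space with the Radon–Riesz upgrade uniformly over $t$.
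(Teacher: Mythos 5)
Your architecture for the first claim (uniform bounds from \autoref{prop_well_posed}, tightness in $\mathcal{E}$, Skorokhod representation, identification of the limit, Gy\"ongy--Krylov) is the same as the paper's, but the identification step has a genuine gap. In the interesting regime $1/p+1/q>1$, weak $L^\infty_t L^2_x$ solutions of \eqref{eq:transport} are neither unique nor automatically renormalized, so showing that the limit is a weak solution is not enough; and your proposed upgrade --- renormalizing the stochastic equation and invoking ``the uniform $H^{1-\alpha-\delta}_x$ bound'' --- does not work for three reasons. First, the Sobolev estimate \eqref{eq:bound_rho_sobolev} carries an $\varepsilon^2$ prefactor and degenerates as $\varepsilon\downarrow 0$, so there is no $\varepsilon$-uniform spatial regularity. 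Second, even at fixed $\varepsilon$ the chain rule is \emph{not} exact: $\rho^\varepsilon$ only lies in $H^{1-\alpha-\delta}_x$, not $H^1_x$, and the It\=o term $\varepsilon^2\Delta\rho^\varepsilon$ produces an anomalous dissipation defect --- this is precisely the content of \autoref{lem:beta_rho} and the measure $\mathcal{D}^\varepsilon$. Third, passing to the limit in the nonlinear quantity $\beta(\rho^{\varepsilon_n})$ requires strong (a.e.\ or $L^1_{loc}$) convergence of $\rho^{\varepsilon_n}$, which the $\mathcal{E}$-topology (weak in space) does not provide. The paper avoids all of this by splitting $\rho_0=\rho_0^1+\rho_0^2$ with $\rho_0^1\in L^1_x\cap L^\infty_x$ and $\|\rho_0^2\|_{L^2_x\cap L^p_x}$ small: for the bounded part the limit inherits an $L^\infty_t L^\infty_x$ bound, for which DiPerna--Lions gives uniqueness of weak solutions and their coincidence with the renormalized one ($p=\infty$ puts one in the range $1/p+1/q\le 1$), while the small part is controlled by linearity and the estimate \eqref{eq:bound_rho_Linfty_Lp}. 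This reduction is also what delivers the $L^p_x$ convergence in the second claim, via interpolation between the $L^1_x\cap L^\infty_x$ bounds and $L^2_x$ convergence; your proposal has no substitute for it.

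For the second claim, your assertion that $\|\rho^\varepsilon_t\|_{L^2_x}=\|\rho^\varepsilon_0\|_{L^2_x}$ (and the $p=2$ ``equality'' remark) is false: the $L^2_x$ norm of the stochastic solution is only non-increasing, modulo a non-negative anomalous dissipation measure, and even this monotone energy inequality is a non-trivial output of \autoref{lem:beta_rho}, holding only for a.e.\ $t$ (which is why the paper extends the time interval to $[0,T+1]$ and works on the set $\tau_\omega$). The correct mechanism is the one-sided one: monotonicity for $\rho^\varepsilon$, exact conservation for the renormalized limit, and lower semicontinuity of the norm under the $C_t\mathcal{B}$ convergence together force convergence of the norms. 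Finally, your contradiction argument for uniformity in $t$ presupposes ``matching norms'' at the extracted times $t_\varepsilon$, but uniform-in-$t$ convergence of $\|\rho^\varepsilon_t\|_{L^2_x}$ is exactly the statement to be proved; it does not follow from the pointwise or $L^2_t$-a.e.\ norm convergence you have at that stage. The paper's Steps 2--5 (local energy balance against space--time cutoffs, vanishing of $\langle\langle d\mathcal{D}^{\varepsilon_n},\psi\phi\rangle\rangle$, and the uniform spatial concentration estimates on $\R^d\setminus O_R$) are what supply this uniform norm convergence before the weak-to-strong upgrade; your ``main obstacle'' paragraph correctly locates the difficulty but the proposal does not resolve it.
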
 

The proof of the first part of \autoref{thm:convergence} relies on relatively standard stochastic compactness estimates for $\rho^\varepsilon$ in the space $ L^\infty_t (L^2_x \cap L^p_x) \cap  C^\gamma_t H^{-\sigma}_x$, for some $\sigma>d/2+1$ and uniformly in $\varepsilon \in (0,1)$, implying tightness in $\mathcal{E}$ by classical compact embedding results. 
The identification of the limit as the unique renormalized solution of \eqref{eq:transport}, among the possibly non-unique weak solutions, follows from the DiPerna-Lions stability results for renormalized solutions.
On the other hand, the convergence in $\mathscr{E}$ is much more delicate and does not follow from any tightness argument in $\mathscr{E}$. The key idea is that, when $\dvg \, b = 0$, renormalized solutions to the transport equation have constant $L^2_x$ norm and this permits to show that $\| \rho^\varepsilon_t \|_{L^2_x} \to \| \rho_t \|_{L^2_x}$ as $\varepsilon \downarrow 0$, uniformly in $t \in [0,T]$. In view of \autoref{lem_Backward_convergence}, this suffices to promote weak convergence in $L^2_x\cap L^p_x$ into a strong one for any $p \in (1,\infty)$.
In this part of the proof, we crucially use the Sobolev regularization in $L^2_\omega L^2_t H^{1-\alpha-\delta}_x$ provided by \autoref{prop_well_posed} for $\varepsilon >0$, more specifically when proving \autoref{lem:beta_rho} in the Appendix. 
In particular, we would not be able to prove the second part of \autoref{thm:convergence} if $W$ was replaced by a spatially smooth noise, as the latter does not give, in general, any Sobolev regularity of solutions.
Finally, we remark that we cannot replace the space $L^\infty_t L^p_x$ with $C_t L^p_x$ in the above, since we do not know if solutions to \eqref{eq:stoch_transport} have strongly continuous trajectories. Nonetheless, the convergence with respect to the distance $d_\mathscr{E}$ is uniform in time.

Given the convergence in \autoref{thm:convergence}, one is naturally led to ask how fast the convergence takes place.
For example, for the vanishing viscosity scheme and assuming $\rho_0 \in L^\infty_x$, the authors of \cite{BoCiCr22} are able to give rates of convergence in $C_t L^p_x$ for every $p \in [1,\infty)$.
However, their proof does not generalize to $\rho_0 \notin L^\infty_x$.
In this paper, we are interested in giving probabilistic rates for the convergence in $\mathscr{E}$. Namely, we want to give precise asymptotic estimates for the probability that the solution $\rho^{\varepsilon,\rho_0}$ of \eqref{eq:stoch_transport} is not close to the unique renormalized solution of \eqref{eq:transport} with the same initial condition. This kind of asymptotics goes by the name of \emph{Large Deviations Estimates}.

We give some preliminary definitions from \cite{BuDu19}.
Let $(\mathscr{X},d_{\mathscr{X}})$ be a metric space.
Given $\rho \in \mathscr{X}$ and a closed set $F \subset \mathscr{X}$, we denote by $d_\mathscr{X}(\rho,F)$ the distance of $\rho$ from the closed set $F$, namely $d_\mathscr{X}(\rho,F) := \inf_{f \in F} d_\mathscr{X}(\rho,f)$.

We say that a family of maps $I_{\rho_0} : \mathscr{X} \to [0,\infty]$ parametrized by $\rho_0 \in B$ has compact
level sets on compacts if, for every $K \subset B$ compact and $M < \infty$, the set 
\begin{align*}
\Phi_{K}^M := \bigcup_{\rho_0 \in K} \Phi_{\rho_0}^M,
\qquad
\Phi_{\rho_0}^M := \{ \rho \in \mathscr{X} : I_{\rho_0}(\rho) \leq M\},
\end{align*}
is compact in $\mathscr{X}$. Taking $K := \{ \rho_0 \}$ for some $\rho_0 \in B$, the previous condition implies that the sublevels $\Phi_{\rho_0}^M$ are compact in $\mathscr{X}$ for every $\rho_0 \in B$ and $M<\infty$, in particular each map $I_{\rho_0}:\mathscr{X} \to [0,\infty]$ is lower-semicontinuous. 

\begin{definition}\label{def:unif_Laplace}
Let $I_{\rho_0}$ have compact level sets on compacts and let $\{\rho^{\varepsilon,\rho_0}\}$ be $\mathscr{X}$-valued random variables parametrized by $\varepsilon \in (0,1)$ and $\rho_0 \in B$.
\begin{itemize}
\item 
We say that $\{\rho^{\varepsilon,\rho_0}\}$ satisfies the Laplace principle (LP) on $\mathscr{X}$ uniformly on compacts with speed $\varepsilon^2$ and rate function $I_{\rho_0}$ if for every $K \subset B$ compact and every bounded and continuous functions $h: \mathscr{X} \to \R$ it holds
\begin{align} 
\lim_{\varepsilon \downarrow 0}
 \sup_{\rho_0 \in K}
 \left|
\varepsilon^2 \log \mathbb{E} \left[ \exp(-\varepsilon^{-2}h(\rho^{\varepsilon,\rho_0}))  \right]
+
\inf_{\rho \in \mathscr{X}} \left\{ h(\rho) + I_{\rho_0}(\rho)\right\}
\right|
=
0.
\end{align}
\item 
We say that $\{\rho^{\varepsilon,\rho_0}\}$ satisfies the Large Deviations Principle (LDP) on $\mathscr{X}$ uniformly on compacts with speed $\varepsilon^2$ and rate function $I_{\rho_0}$ if for every $K \subset B$ compact, $\delta>0$, and $M \in (0,\infty)$ it holds
\begin{align} \label{eq:upper_FW}
\limsup_{\varepsilon \downarrow 0}
\sup_{\rho_0 \in K}
\sup_{m \leq M}
\left( \varepsilon^2 \log \PP \{ d_\mathscr{X} (\rho^{\varepsilon,\rho_0}, \Phi_{\rho_0}^m) \geq \delta\} + m \right) 
\leq 0,
\end{align}
\begin{align} \label{eq:lower_FW}
\liminf_{\varepsilon \downarrow 0}
\inf_{\rho_0 \in K}
\inf_{\rho \in \Phi_{\rho_0}^{M}}
\left( \varepsilon^2 \log \PP (d_\mathscr{X}(\rho^{\varepsilon,\rho_0},\rho)<\delta) 
+ I_{\rho_0}(\rho)
\right)
\geq 0.
\end{align}
\end{itemize}
\end{definition}

In this work we establish uniform Laplace and Large Deviations Principles that govern the convergence given by \autoref{thm:convergence}. 
More precisely, we show that the convergence $\rho^{\varepsilon,\rho_0} \to \rho^{\rho_0}$ is governed by a LP on $\mathcal{E}$ and a LDP on $\mathscr{E}$, uniformly as the initial condition $\rho_0$ varies in a compact subset of $B$ (the closed unit ball of $L^2_x\cap L^p_x$).
It is worth to mention that the uniform LP implies the uniform LDP by \cite[Proposition 1.14]{BuDu19}.

For the uniform LP in $\mathcal{E}$ we rely on the weak convergence approach developed by Budhiraja, Dupuis, and Maroulas in \cite{Bud_Dup} (see also \cite{budhiraja2000variational} and the book \cite{BuDu19}). 
Roughly speaking, using this approach one can deduce the validity of a uniform LP on the Polish space $\mathcal{E}$
as a consequence of stability of the convergence $\rho^\varepsilon \to \rho$ (plus additional technical verifications that we detail in \autoref{Sec_LDP}).
More precisely, one has to show that the convergence in law $\rho^\varepsilon \to \rho$, as $\mathcal{E}$-valued random variables, is robust with respect to the addition of predictable drifts $g^\varepsilon$ in the equation for $\rho^\varepsilon$, with $g^\varepsilon$ converging in law as weakly $L^2_t \mathcal{H}_0$-valued random variables. Here $\mathcal{H}_0 := \mathcal{Q}^{-1/2} \mathbb{H} = H^{d/2+\alpha}_x\cap \mathbb{H}  $ denotes the Cameron-Martin space associated to our noise, with inner product $\langle f,g\rangle_{\mathcal{H}_0} = \langle \mathcal{Q}^{-1/2}f,\mathcal{Q}^{-1/2}g \rangle = \langle f, g \rangle_{H^{d/2+\alpha}_x}$ for every $f,g \in \mathcal{H}_0$, see \cite[Lemma 3.1]{BaGrMa24}.
In order to apply the weak convergence approach, we show in \autoref{Sec_Meas_Stab} that given a sequence of initial conditions $\rho^\varepsilon_0 \to \rho_0 \in B$, a constant $N<\infty$, and a sequence
\begin{align*}
\{g^\varepsilon\}_{\varepsilon \in (0,1)} \subset \mathcal{P}_2^N
:=
\{g \mbox{ is } \{\mathcal{F}_t\}_{t \geq0}\mbox{ predictable}\,, \, \|g\|^2_{L^2_t \mathcal{H}_0} \leq N\,\,\, \mathbb{P} \mbox{-almost surely}\},
\end{align*}
such that $g^\varepsilon$ converges in law to a random variable $g$ in the sense specified above, then the associated solution $\rho^{\varepsilon, \rho_0^\varepsilon, g^\varepsilon}$ of \eqref{eq:rho_general} converges in law to $\rho^{\rho_0,g}$ (the unique renormalized solution of \eqref{eq:transport} with drift $b+g$ and initial condition $\rho_0$) as $\mathcal{E}$-valued random variables.
Notice that here $\rho^{\rho_0,g}$ is random, as $g$ is random.
This kind of stability follows by arguments similar to those in \cite{diperna1989ordinary}, except for the fact that extra carefulness is required to deal with the extra randomness coming from $g$ and the $g^\varepsilon$'s. We have the following:
\begin{thm} \label{thm:LDP}
In the same setting of the first part of \autoref{thm:convergence}, the family of processes $\{\rho^{\varepsilon,\rho_0}\}$, $\varepsilon \in (0,1)$, $\rho_0 \in B$, satisfies a Laplace Principle on $\mathcal{E}$ uniformly on compacts with speed $\varepsilon^2$ and rate function
\begin{align} \label{def_functional}
I_{\rho_0}(\rho) := \inf \left\{ \frac12 \|g\|^2_{L^2_t\mathcal{H}_0} \,:\, g \in L^2_t\mathcal{H}_0, \, \rho =\rho^{\rho_0,g}\right\}.
\end{align}
In particular, $\{\rho^{\varepsilon,\rho_0}\}$ satisfies also a Large Deviations Principle on $\mathcal{E}$ uniformly on compacts with same speed and rate function.
\end{thm}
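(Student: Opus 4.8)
The plan is to apply the weak convergence (variational) framework of Budhiraja--Dupuis--Maroulas to the Polish space $\mathcal{E}$, exactly as set up in the discussion preceding the theorem. Concretely, I would verify the two hypotheses that, by \cite[Theorem 3.2 or the corresponding uniform version]{Bud_Dup} (see also \cite{BuDu19}), are sufficient for the uniform Laplace Principle on a Polish space. First, there must exist a measurable map $\mathscr{G}^0 : \mathcal{E}_0 \times (\text{controls}) \to \mathcal{E}$ sending a deterministic control $g \in L^2_t\mathcal{H}_0$ and an initial datum $\rho_0$ to the unique renormalized solution $\rho^{\rho_0,g}$ of the transport equation with drift $b+g$; the rate function $I_{\rho_0}$ in \eqref{def_functional} is then the usual contraction of the quadratic cost $\frac12\|g\|^2_{L^2_t\mathcal{H}_0}$ through this map. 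Second, one needs the stochastic controlled equations to converge: for initial data $\rho_0^\varepsilon \to \rho_0$ in $B$, a level $N<\infty$, and controls $g^\varepsilon \in \mathcal{P}_2^N$ converging in law (as weakly-$L^2_t\mathcal{H}_0$-valued random variables, with this space being Polish and the ball $\mathcal{P}_2^N$ compact in that weak topology), the solutions $\rho^{\varepsilon,\rho_0^\varepsilon,g^\varepsilon}$ of \eqref{eq:rho_general} converge in law in $\mathcal{E}$ to $\rho^{\rho_0,g}$. This is precisely the statement the authors announce to prove in \autoref{Sec_Meas_Stab}, so I would invoke it here.

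With these two ingredients in hand, the proof is essentially an application of the abstract machinery, but some care is needed for the \emph{uniformity} in $\rho_0 \in K$ for compact $K \subset B$. The steps, in order, would be: (i) establish that the skeleton/solution map is well-defined — i.e. for each deterministic $g \in L^2_t\mathcal{H}_0$ there is a \emph{unique} renormalized solution $\rho^{\rho_0,g}$ of \eqref{eq:transport} with drift $b+g$ — invoking \cite[Theorem II.3]{diperna1989ordinary} applied to the perturbed drift $b+g$, noting $\dvg(b+g) = \dvg\, b$ since $g$ is divergence-free so the divergence assumptions are unchanged, and checking the joint measurability of $(\rho_0,g) \mapsto \rho^{\rho_0,g}$ (which follows from the stability of renormalized solutions under convergence of data, a consequence of the same DiPerna--Lions theory); (ii) verify the compact-level-sets-on-compacts property for $I_{\rho_0}$: given $K \subset B$ compact and $M<\infty$, the set $\{(\rho_0,g) : \rho_0 \in K, \frac12\|g\|^2_{L^2_t\mathcal{H}_0}\le M\}$ is compact in $B \times (\text{weak-}L^2_t\mathcal{H}_0\text{ ball})$, and its image under the continuous (by the stability just mentioned) map into $\mathcal{E}$ is therefore compact, whence $\Phi_K^M$ is compact — this also yields lower-semicontinuity of each $I_{\rho_0}$; (iii) feed the stability result of \autoref{Sec_Meas_Stab} together with (i)--(ii) into the uniform version of the weak-convergence criterion to conclude the uniform LP on $\mathcal{E}$ with speed $\varepsilon^2$ and rate $I_{\rho_0}$; (iv) deduce the uniform LDP on $\mathcal{E}$ from the uniform LP via \cite[Proposition 1.14]{BuDu19}, as already remarked in the text.

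The main obstacle is the passage from the \emph{stochastic controlled equation} \eqref{eq:rho_general} to the \emph{deterministic skeleton equation} in the convergence-of-controls step (ingredient (ii) above, carried out in \autoref{Sec_Meas_Stab}). One must show that as $\varepsilon \downarrow 0$, the noise term $\varepsilon\sum_k \sigma_k\cdot\nabla\rho^{\varepsilon}\,dW^k$ and the It\^o-correction term $(1+\kappa)\varepsilon^2\Delta\rho^\varepsilon$ vanish in a suitable sense while the controlled drift $g^\varepsilon\cdot\nabla\rho^\varepsilon$ passes to the limit $g\cdot\nabla\rho$ — and crucially that the \emph{limiting} object is the \emph{renormalized} solution, not merely a weak solution, since in the parameter regime considered weak solutions need not be unique. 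Identifying the renormalized solution in the limit is where the DiPerna--Lions commutator estimates re-enter, and the additional randomness of $g$ (and of the $g^\varepsilon$) requires running these arguments $\omega$-wise or in expectation with care, e.g. via a Skorokhod representation on the joint law of $(\rho^\varepsilon, g^\varepsilon)$ on $\mathcal{E}\times(\text{weak-}L^2_t\mathcal{H}_0)$ and a martingale-problem/tightness argument to kill the stochastic integral. A secondary technical point is to keep all estimates uniform over $\rho_0\in K$; since $K$ is compact in the strong topology of $L^2_x\cap L^p_x$, the uniform bounds from \autoref{prop_well_posed} and the equicontinuity of the DiPerna--Lions stability on $K$ should deliver this without essential new difficulty.
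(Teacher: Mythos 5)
Your proposal follows essentially the same route as the paper: the two hypotheses of the uniform weak-convergence criterion of \cite{Bud_Dup} are supplied by the compactness of the image of the skeleton map (the paper's \autoref{compactness_transport_map}, your step (ii)) and by the convergence in law of the controlled solutions $\rho^{\varepsilon,\rho_0^\varepsilon,g^\varepsilon}\to\rho^{\rho_0,g}$ (the first part of \autoref{stability_theorem}), and the LDP then follows from the LP via \cite[Proposition 1.14]{BuDu19}. The only item you do not spell out is the remaining hypothesis of \cite[Theorem 5]{Bud_Dup} that, for each fixed $v\in\mathcal{E}$, the map $\rho_0\mapsto I_{\rho_0}(v)$ is lower-semicontinuous on $B$ (not just $\rho\mapsto I_{\rho_0}(\rho)$ for fixed $\rho_0$); the paper proves this by first showing the infimum in \eqref{def_functional} is attained and then passing to weak limits of minimizers via \cite[Theorem II.7]{diperna1989ordinary} — an argument that your joint-continuity ingredient does support, but which should be stated explicitly.
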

Before moving to the LDP on $\mathscr{E}$, we state a corollary of previous \autoref{thm:LDP} on the anomalous dissipation measure $\mathcal{D}^\varepsilon : \Omega \to \mathcal{M}_+$ appearing in the local energy balance of $\rho^\varepsilon$: 
\begin{align*} 
d|\rho^\varepsilon|^2
+
b \cdot \nabla |\rho^\varepsilon|^2 dt
+
\varepsilon \circ dW_t \cdot \nabla |\rho^\varepsilon|^2 
=
- d\mathcal{D}^\varepsilon.
\end{align*}
Here $\mathcal{M}_+ := \mathcal{M}_+([0,T] \times \R^d)$ denotes the space of non-negative Radon measures on $[0,T] \times \R^d$ endowed with the total variation distance $d_{TV} (\mu,\nu) := \sup \{ |\mu(A) - \nu(A)| : A \mbox{ measurable} \}$ and we assume $\dvg \,b = 0$.
The stochastic integral in the local energy balance above is rigorously understood in It\=o sense, although we prefer to write it in Stratonovich form to make it clear that it formally gives no direct contribution in the global energy balance of $\rho^\varepsilon$. Therefore, the measure $d\mathcal{D}^\varepsilon$ prescribes exactly how much of the $L^2_x$ norm of $\rho^\varepsilon$ is dissipated at each $(t,x) \in [0,T] \times \R^d$. Existence of the dissipation measure has been proved in \cite{DrGaPa25+} in the case $b=0$. As our setting is slightly different, for the sake of completeness we show in \autoref{lem:beta_rho} how to adapt their proof to this situation.

Since $\rho^\varepsilon$ converges to the unique renormalized solution of \eqref{eq:transport}, which has constant $L^2_x$ norm when $\dvg \, b =0$, it is natural to expect $d\mathcal{D}^\varepsilon \to 0$ in a certain sense. Here we characterize this convergence with a LDP.
Since the LDP is not uniform on compacts of $\mathcal{M}_+$ we need the following version of LDP.
\begin{definition}
In the same setting as \autoref{def:unif_Laplace}, we say that $\{\rho^{\varepsilon,\rho_0}\}$ satisfies the pointwise LDP on $\mathscr{X}$ with speed $\varepsilon^2$ and rate function $I_{\rho_0}$ if for every $\rho_0^\varepsilon \to \rho_0$ in $B$, for every $F \subset \mathscr{X}$ closed and $G \subset \mathscr{X}$ open it holds
\begin{align} \label{eq:upper_V} 
\limsup_{\varepsilon \downarrow 0}
 \varepsilon^2 \log \PP \{ \rho^{\varepsilon,\rho_0^\varepsilon} \in F \}   \leq -\inf_{\rho \in F} I_{\rho_0}(\rho),
\end{align}
\begin{align} \label{eq:lower_V}
\liminf_{\varepsilon \downarrow 0}
 \varepsilon^2 \log \PP \{ \rho^{\varepsilon,\rho_0^\varepsilon} \in G \}  \geq -\inf_{\rho \in G} I_{\rho_0}(\rho)  .
\end{align}
\end{definition}

 We have the following:
\begin{cor} \label{cor:LDP_dissipation}
Under the same assumptions of \autoref{thm:convergence}, let $\dvg \, b = 0$. 
Then the family $\{\mathcal{D}^\varepsilon\}_{\varepsilon \in (0,1)}$ satisfies a pointwise LDP in $\mathcal{M}_+$ with speed $\varepsilon^2$ and good rate function $J(\mu) := \infty \mathbf{1}_{\{\mu \neq 0\}}$.
\end{cor}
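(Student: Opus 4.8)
The plan is to reduce everything to the identity that pins the total mass of $\mathcal{D}^\varepsilon$ to the energy defect of $\rho^\varepsilon$, and then to combine this with the large deviations upper bound on $\mathcal{E}$ of \autoref{thm:LDP} and the convergence in probability of \autoref{thm:convergence}. Since $J$ vanishes only at $\mu=0$ and equals $+\infty$ elsewhere, its only nontrivial sublevel set is $\{0\}$, so $J$ is good; moreover \eqref{eq:upper_V} is trivial when $0\in F$ (then $\inf_F J=0$ while $\varepsilon^2\log\PP\le0$) and \eqref{eq:lower_V} is trivial when $0\notin G$ (then $\inf_G J=+\infty$). As $\mathcal{D}^\varepsilon$ is a non-negative measure, $d_{TV}(\mathcal{D}^\varepsilon,0)=\mathcal{D}^\varepsilon([0,T]\times\R^d)=\|\mathcal{D}^\varepsilon\|_{TV}$, so it suffices to prove: (i) $\varepsilon^2\log\PP\{\|\mathcal{D}^\varepsilon\|_{TV}\ge\delta\}\to-\infty$ for every $\delta>0$, which yields the upper bound for any closed $F\not\ni0$ (such an $F$ satisfies $F\subseteq\{\|\mu\|_{TV}\ge\delta\}$ for some $\delta>0$); and (ii) $\PP\{\|\mathcal{D}^\varepsilon\|_{TV}<\delta\}\to1$ for every $\delta>0$, which yields the lower bound for any open $G\ni0$ (such a $G$ contains $\{\|\mu\|_{TV}<\delta\}$ for some $\delta>0$, and then $\varepsilon^2\log\PP\{\mathcal{D}^\varepsilon\in G\}\to0=-\inf_G J$).

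First I would establish the mass identity. Integrating the local energy balance for $\rho^\varepsilon$ over $[0,T]\times\R^d$, the terms $b\cdot\nabla|\rho^\varepsilon|^2$ and $\sigma_k\cdot\nabla|\rho^\varepsilon|^2$ integrate to zero because $\dvg\,b=0$ and $\dvg\,\sigma_k=0$, so that, $\PP$-almost surely,
\[
\|\mathcal{D}^\varepsilon\|_{TV}=\mathcal{D}^\varepsilon([0,T]\times\R^d)=\|\rho_0^\varepsilon\|_{L^2_x}^2-\|\rho^\varepsilon_T\|_{L^2_x}^2\ge0,
\]
the non-negativity being part of the construction of $\mathcal{D}^\varepsilon$ in \autoref{lem:beta_rho}; in particular $\|\mathcal{D}^\varepsilon\|_{TV}\le\|\rho_0^\varepsilon\|_{L^2_x}^2\le1$, so $\mathcal{D}^\varepsilon\in\mathcal{M}_+$ is genuinely finite. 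Claim (ii) then follows immediately: by the second part of \autoref{thm:convergence} (valid since $\dvg\,b=0$) we have $\rho^\varepsilon\to\rho$ in $\mathscr{E}$ in probability, hence $\|\rho^\varepsilon_T\|_{L^2_x}\to\|\rho_T\|_{L^2_x}=\|\rho_0\|_{L^2_x}$ in probability, while $\|\rho_0^\varepsilon\|_{L^2_x}\to\|\rho_0\|_{L^2_x}$ deterministically; thus $\|\mathcal{D}^\varepsilon\|_{TV}\to0$ in probability.

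For claim (i) I would go through \autoref{thm:LDP}. Fix $\delta>0$. Since $\|\rho_0^\varepsilon\|_{L^2_x}^2\to\|\rho_0\|_{L^2_x}^2$, for all sufficiently small $\varepsilon$ one has the inclusion of events $\{\|\mathcal{D}^\varepsilon\|_{TV}\ge\delta\}\subseteq\{\rho^\varepsilon\in A_\delta\}$, where $A_\delta:=\{u\in\mathcal{E}:\|u_T\|_{L^2_x}^2\le\|\rho_0\|_{L^2_x}^2-\delta/2\}$. The set $A_\delta$ is closed in $\mathcal{E}$: convergence in $d_\mathcal{E}$ forces $u^n_T\rightharpoonup u_T$ weakly in $L^2_x$ (because $\mathcal{E}\subset C_t\mathcal{B}$), and $\|\cdot\|_{L^2_x}$ is weakly lower-semicontinuous, so $\|u_T\|_{L^2_x}^2\le\liminf_n\|u^n_T\|_{L^2_x}^2$. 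Applying the large deviations upper bound on $\mathcal{E}$ from \autoref{thm:LDP}, in its pointwise form along the converging initial data $\rho_0^\varepsilon\to\rho_0$ (a standard consequence of the uniform-on-compacts statement together with the goodness of $I_{\rho_0}$), to the closed set $A_\delta$ gives
\[
\limsup_{\varepsilon\downarrow0}\varepsilon^2\log\PP\{\rho^{\varepsilon,\rho_0^\varepsilon}\in A_\delta\}\le-\inf_{u\in A_\delta}I_{\rho_0}(u).
\]
Finally, this infimum is $+\infty$: if $I_{\rho_0}(u)<\infty$ then $u=\rho^{\rho_0,g}$ for some $g\in L^2_t\mathcal{H}_0$, and since $\dvg\,b=0$ and every element of $\mathcal{H}_0\subset\mathbb{H}$ is divergence-free, the drift $b+g$ is divergence-free, so the renormalized solution preserves its $L^2_x$ norm, $\|u_T\|_{L^2_x}=\|\rho_0\|_{L^2_x}$, whence $u\notin A_\delta$. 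Hence $\PP\{\|\mathcal{D}^\varepsilon\|_{TV}\ge\delta\}\le\PP\{\rho^{\varepsilon,\rho_0^\varepsilon}\in A_\delta\}$ decays faster than $e^{-M/\varepsilon^2}$ for every $M<\infty$, which is (i).

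I expect the main obstacle to be conceptual rather than computational: the map $\rho^\varepsilon\mapsto\mathcal{D}^\varepsilon$ is not continuous from $\mathcal{E}$ into $(\mathcal{M}_+,d_{TV})$, since its total mass depends on $\|\rho^\varepsilon_T\|_{L^2_x}$, which is only lower-semicontinuous for the weak $L^2_x$ convergence built into $\mathcal{E}$; this is also why the statement is a \emph{pointwise} LDP on the non-separable space $\mathcal{M}_+$ and cannot be obtained by a plain contraction principle. The argument above circumvents this by exploiting only one-sided information — namely that $\{\|\mathcal{D}^\varepsilon\|_{TV}\ge\delta\}$ sits inside an $\mathcal{E}$-closed set on which the LDP upper bound applies and on which $I_{\rho_0}\equiv+\infty$, because renormalized solutions with divergence-free drift conserve the $L^2_x$ norm — while borrowing the matching lower bound for free from the convergence in probability of \autoref{thm:convergence}. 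A second delicate point is the mass identity itself, which genuinely uses $\dvg\,b=0$; without it only an inequality would hold and the anomalous dissipation would not in general vanish.
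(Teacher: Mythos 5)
Your proposal is correct and follows essentially the same route as the paper: reduce to the total mass $d_{TV}(\mathcal{D}^\varepsilon,0)$ via the global energy balance, embed the event $\{d_{TV}(\mathcal{D}^\varepsilon,0)\geq\delta\}$ into the $\mathcal{E}$-closed set of trajectories with $\|u_T\|_{L^2_x}^2\leq\|\rho_0\|_{L^2_x}^2-\delta/2$, and apply the pointwise LDP upper bound coming from \autoref{thm:LDP} together with the observation that renormalized solutions with divergence-free drift $b+g$ conserve the $L^2_x$ norm, so the rate function is identically $+\infty$ on that set. The only (harmless) deviations are that the paper gets the lower bound for free from the upper bound by complementation ($\PP\{A\}+\PP\{A^c\}=1$ forces $\varepsilon^2\log\PP\{d_{TV}(\mathcal{D}^\varepsilon,0)<\delta\}\to 0$) instead of invoking the second part of \autoref{thm:convergence}, and that the energy balance at the exact time $t=T$ is only available as the one-sided inequality $d_{TV}(\mathcal{D}^\varepsilon,0)\leq\|\rho_0^\varepsilon\|_{L^2_x}^2-\|\rho_T^\varepsilon\|_{L^2_x}^2$ (obtained by extending to $[0,T+1]$, using spatial cutoffs, and taking $t_n\downarrow T$ with lower-semicontinuity), not the identity you assert — but the inequality in that direction is all either argument actually uses.
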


The last result of this paper consists in the uniform LDP on the space $\mathscr{E}$.
\begin{thm} \label{thm:strong_LDP}
In the same setting of \autoref{thm:convergence}, suppose in addition $\dvg \,b=0$. Then the family of processes $\{\rho^{\varepsilon,\rho_0}\}$, $\varepsilon \in (0,1)$, $\rho_0 \in B$, satisfies a Large Deviations Principle on $\mathscr{E}$ uniformly on compacts with speed $\varepsilon^2$ and rate function $I_{\rho_0}$.
\end{thm}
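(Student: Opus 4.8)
The plan is to combine the uniform LP on $\mathcal{E}$ from Theorem~\ref{thm:LDP} with the crucial upgrade from weak to strong convergence via the constancy of the $L^2_x$ norm. Since we already know (Theorem~\ref{thm:LDP}) that $\{\rho^{\varepsilon,\rho_0}\}$ satisfies a uniform LP — hence a uniform LDP — on the Polish space $\mathcal{E}$ with rate function $I_{\rho_0}$, and since the identity map $\iota : (\mathscr{E}, d_\mathscr{E}) \to (\mathcal{E}, d_\mathcal{E})$ is continuous (the $d_\mathscr{E}$-topology is finer), the issue is \emph{not} the validity of the rate-function bounds but rather that the finer $\mathscr{E}$-topology must not create new "escape routes": one has to show that, with overwhelming probability on the relevant exponential scale, $\rho^{\varepsilon,\rho_0}$ stays in a set which is $\mathscr{E}$-compact, and on which the two topologies agree.

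First I would record the key deterministic fact underlying everything: when $\dvg\, b = 0$, renormalized solutions of \eqref{eq:transport} with drift $b+g$ (for $g \in L^2_t\mathcal H_0$, in particular $g$ divergence-free and smoother than $b$) preserve the $L^2_x$ norm, so every $\rho \in \Phi_{\rho_0}^M$ satisfies $\|\rho_t\|_{L^2_x} = \|\rho_0\|_{L^2_x}$ for all $t$. Combined with Lemma~\ref{lem_Backward_convergence} (which promotes weak convergence in $L^2_x \cap L^p_x$ to strong convergence when the $L^2_x$ norms converge), this shows that on the level set $\Phi_K^M$ the $d_\mathcal{E}$-topology and the $d_\mathscr{E}$-topology coincide; hence $\Phi_K^M$, being $\mathcal{E}$-compact by Theorem~\ref{thm:LDP}, is also $\mathscr{E}$-compact. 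This is exactly the "compact level sets on compacts" requirement of Definition~\ref{def:unif_Laplace} in the space $\mathscr{E}$, and it also gives, for each fixed $\rho \in \Phi_{\rho_0}^M$, that $d_\mathscr{E}$-balls and $d_\mathcal{E}$-balls around $\rho$ are comparable relative to the compact $\Phi_K^M$ — which will yield the lower bound \eqref{eq:lower_FW} on $\mathscr{E}$ directly from the one on $\mathcal{E}$.

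The substantive step is the upper bound \eqref{eq:upper_FW} in the stronger topology, i.e. controlling $\PP\{ d_\mathscr{E}(\rho^{\varepsilon,\rho_0}, \Phi_{\rho_0}^m) \ge \delta\}$. I would split the event: either $d_\mathcal{E}(\rho^{\varepsilon,\rho_0}, \Phi_{\rho_0}^m) \ge \delta'$, whose probability is handled by the $\mathcal{E}$-LDP upper bound (with $\delta'$ chosen via the compactness/topology-equivalence on $\Phi_K^m$), or $\rho^{\varepsilon,\rho_0}$ is $d_\mathcal{E}$-close to $\Phi_{\rho_0}^m$ but not $d_\mathscr{E}$-close to it — which forces $\sup_{t}\big|\|\rho^{\varepsilon,\rho_0}_t\|_{L^2_x} - \|\rho_0^\varepsilon\|_{L^2_x}\big|$ to be bounded below, because near a point of $\Phi_{\rho_0}^m$ the $L^2_x$ profile is constant in $t$ and equal to $\|\rho_0\|_{L^2_x}$. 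So it suffices to prove that
\begin{align*}
\limsup_{\varepsilon \downarrow 0} \sup_{\rho_0 \in K} \varepsilon^2 \log \PP\Big\{ \sup_{t \in [0,T]} \big| \|\rho^{\varepsilon,\rho_0}_t\|_{L^2_x}^2 - \|\rho_0^\varepsilon\|_{L^2_x}^2 \big| \ge \eta \Big\} = -\infty
\end{align*}
for every $\eta>0$. Here I would use the dissipation identity: $\|\rho^{\varepsilon,\rho_0}_t\|_{L^2_x}^2 = \|\rho^{\varepsilon}_0\|_{L^2_x}^2 - \mathcal D^\varepsilon([0,t]\times\R^d) + M^\varepsilon_t$, where $M^\varepsilon$ is the martingale coming from the Stratonovich-to-Itô correction minus the Itô noise term; since the noise is divergence-free and $\rho^\varepsilon$ has the Sobolev regularity $L^2_\omega L^2_t H^{1-\alpha-\delta}_x$ from Proposition~\ref{prop_well_posed}, the bracket of $M^\varepsilon$ is of order $\varepsilon^2$ times a controlled quantity, and $\mathcal D^\varepsilon \ge 0$ is itself of order $\varepsilon^2$ in expectation (this is the content of Lemma~\ref{lem:beta_rho}, and the same mechanism that drives Corollary~\ref{cor:LDP_dissipation}). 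An exponential martingale / Azuma–type estimate then gives the superexponential decay $-\infty$ on scale $\varepsilon^2$. The main obstacle I anticipate is precisely this superexponential control of $\sup_t|M^\varepsilon_t|$ and of $\mathcal D^\varepsilon([0,T]\times\R^d)$ uniformly over $\rho_0 \in K$: one must leverage the quantitative $\varepsilon$-dependence in Proposition~\ref{prop_well_posed} and Lemma~\ref{lem:beta_rho} to bound exponential moments $\mathbb E[\exp(\lambda \varepsilon^{-2} \sup_t |M^\varepsilon_t|)]$, which requires the bracket to be not merely $O(\varepsilon^2)$ in mean but $O(\varepsilon^2)$ almost surely (or with Gaussian tails), a point where the a priori $L^\infty_\omega L^\infty_t L^2_x$ bound and the structure of the Kraichnan noise should be exploited. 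Once this estimate is in place, assembling \eqref{eq:upper_FW} and \eqref{eq:lower_FW} on $\mathscr{E}$ from their $\mathcal{E}$-counterparts and the topology-equivalence on level sets is routine, and the uniform LDP on $\mathscr{E}$ follows.
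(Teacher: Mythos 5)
Your overall architecture---bootstrapping the $\mathscr{E}$-LDP from the $\mathcal{E}$-LDP of \autoref{thm:LDP} by showing that, on the exponential scale $\varepsilon^2$, the solution cannot lose $L^2_x$ mass---is genuinely different from the paper's proof, which instead re-runs the Budhiraja--Dupuis scheme on the restricted test class $h=d_\mathscr{E}(\cdot,F)\wedge L$, $F$ compact, using the stability result \autoref{stability_theorem} and the measurability lemmas of Section~\ref{subsec:meas_results}. However, the mechanism you propose for your central superexponential estimate fails. Since $\dvg\, b=0$ and the noise is divergence-free, the global $L^2_x$ balance contains \emph{no} martingale term: testing the local energy balance of \autoref{lem:beta_rho} against $\phi\equiv 1$ kills both the stochastic integral and the Laplacian term, leaving exactly $\|\rho^\varepsilon_t\|^2_{L^2_x}=\|\rho^\varepsilon_0\|^2_{L^2_x}-\mathcal{D}^\varepsilon([0,t]\times\R^d)$, so the entire defect is the anomalous dissipation. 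For $\mathcal{D}^\varepsilon$ the only a priori bound is $\|\mathcal{D}^\varepsilon\|_{TV}\le\|\rho_0\|^2_{L^2_x}$, i.e.\ $O(1)$, not $O(\varepsilon^2)$: the estimate \eqref{eq:bound_rho_sobolev} reads $\varepsilon^2\|\rho^\varepsilon\|^2_{L^2_\omega L^2_t H^{1-\alpha-\delta}_x}\lesssim 1$ and provides no smallness for the limit of $\varepsilon^2\kappa|\nabla\rho^{\varepsilon,\kappa}|^2$. Moreover, even if one had $\mathbb{E}[\mathcal{D}^\varepsilon]=O(\varepsilon^2)$, Markov's inequality would only give $\varepsilon^2\log\PP\{\mathcal{D}^\varepsilon\ge\eta\}\to 0$, not $-\infty$; and an exponential-martingale (Azuma-type) bound with bracket of order $\varepsilon^2$ would give a finite negative limsup, again not $-\infty$. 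Superexponential decay of $\PP\{\mathcal{D}^\varepsilon\ge\eta\}$ is not a pathwise or moment estimate: it is itself a large-deviations statement, and the proof available is the one in \autoref{cor:LDP_dissipation}, which deduces it from the $\mathcal{E}$-LDP upper bound applied to the closed set $\{\rho:\|\rho_T\|^2_{L^2_x}\le \|\rho_0\|_{L^2_x}^2-\delta/2\}$, on which the rate function is identically $+\infty$. Your argument must be rewired to import the estimate from there, uniformized over $\rho_0\in K$ via \autoref{rmk:LDP}.

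Two further gaps. First, your norm argument only controls the $L^2_x$ component of $d_\mathscr{E}$; the $L^p_x$ component for $p\ne 2$ does not follow from conservation of the $L^2_x$ norm for a general $\rho_0\in B$, since the interpolation used in the paper requires an $L^1_x\cap L^\infty_x$ bound and hence the initial-data splitting $\rho_0=\rho_0^1+\rho_0^2$ that appears throughout Sections~\ref{sec:convergence} and~\ref{Sec_Meas_Stab}. Second, in the non-separable space $\mathscr{E}$ the events $\{d_\mathscr{E}(\rho^{\varepsilon,\rho_0},\Phi^m_{\rho_0})\ge\delta\}$ and $\{\sup_{t}|\|\rho^\varepsilon_t\|^2_{L^2_x}-\|\rho^\varepsilon_0\|^2_{L^2_x}|\ge\eta\}$ must be shown to be $\mathcal{F}$-measurable before their probabilities can even be written down; this is fixable via \autoref{lem:|rho-f|_closed} and \autoref{lem:same_law} (the level sets are compact, hence separable, in $\mathscr{E}$), but it cannot be taken for granted and is one of the main reasons the paper does not argue as you do.
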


Proving this result is much more difficult than proving the LP on $\mathcal{E}$, for several technical reasons.

First of all, the space $\mathscr{E}$ is not separable, and the weak convergence approach of \cite{Bud_Dup} cannot be directly applied. More generally, most of the classical results in Probability Theory that one needs to invoke when applying the weak convergence approach fail in non-separable spaces like $\mathscr{E}$, e.g. Skorokhod Representation Theorem and Jakubowski's version thereof (to show the convergence in law $\rho^{\varepsilon,\rho_0^\varepsilon,g^\varepsilon} \to \rho^{\rho_0,g}$) and Doob Measurability Theorem (to rigorously define the solution map $\mathcal{G}^\varepsilon$ in \cite[Assumption 1]{Bud_Dup}).

In addition, a second major obstacle that we need to overcome in the proof of \autoref{thm:strong_LDP} is the lack of compactness in $\mathscr{E}$ for solutions of \eqref{eq:rho_general}. Indeed, even if we knew that the whole family $\{\rho^{\varepsilon,\rho_0^\varepsilon,g^\varepsilon}\}_{\varepsilon \in (0,1)}$ takes values in a separable subspace of $\mathscr{E}$, we still would not have any tightness in $\mathscr{E}$ to be able to extract converging-in-law subsequences. 

Furthermore, the arguments to improve the convergence from $\mathcal{E}$ to $\mathscr{E}$ in \autoref{thm:convergence} cannot be directly generalized in the presence of random drifts $g^\varepsilon$'s, not only because the convergence in law is not strong enough (the argument is done basically $\omega$-wise), but also because of deeper measurability issues.
Indeed, as the space $\mathscr{E}$ is not separable, the Borel $\sigma$-field on the product space $\mathscr{E} \times \mathscr{E}$ does not coincide in general with the product of the Borel $\sigma$-fields on $\mathscr{E}$, i.e. $\mathscr{B}(\mathscr{E} \times \mathscr{E}) \neq \mathscr{B}(\mathscr{E}) \otimes \mathscr{B}(\mathscr{E})$, cf. \cite[Page 244]{billingsley2013convergence}. 
As a consequence, given two $\mathscr{E}$-valued random variables $\rho$ and $\rho'$, it is not generally true that the pair $(\rho,\rho')$ is a $\mathscr{E} \times\mathscr{E}$-valued random variable, since it could fail to be measurable.
Therefore, even if we knew that $g^\varepsilon$, $g$ are defined on the same probability space $(\Omega,\mathcal{F},\PP)$ and $g^\varepsilon \to g$ $\PP$-almost surely, we could not study the convergence in probability
\begin{align*}
\PP \{ d_\mathscr{E}(\rho^{\varepsilon,g^\varepsilon},\rho^g) > \delta\} \to 0,
\qquad
\mbox{as } \varepsilon \downarrow 0,
\end{align*}
as the event $\{ d_\mathscr{E}(\rho^{\varepsilon,g^\varepsilon},\rho^g) > \delta\}$ could be non-measurable with respect to the $\sigma$-field $\mathcal{F}$.

In view of these issues, here we develop a novel general method for proving a uniform LDP on a non-separable  metric space $\mathscr{X}$ that builds upon and extends the weak convergence approach of \cite{Bud_Dup}.
For the sake of presentation, we describe the method in the particular setting relevant to us, but one could easily adapt the ideas to other families of stochastic processes $\{X^{\varepsilon,x_0}\}$ solving another SPDE and taking values in $\mathscr{X}$. 
The key ingredients of our argument are the following:
\begin{enumerate}
    \item 
    We need a Polish space $\mathcal{E}$ such that $\mathscr{E} \subset \mathcal{E}$ and the laws of the processes $\{\rho^{\varepsilon,\rho^\varepsilon_0,g^\varepsilon}\}_{\varepsilon \in (0,1)}$ are tight in $\mathcal{E}$, whenever $\{g^\varepsilon\}_{\varepsilon \in (0,1)} \subset \mathcal{P}^N_2$ for some $N < \infty$ and $\{\rho^\varepsilon_0\}_{\varepsilon \in (0,1)} \subset K$ compact in $B$;
    \item 
    We need that for every compact $F \subset \mathscr{E}$ and $\delta \geq 0$, the set $\{ \rho \in \mathscr{E} : d_\mathscr{E}(\rho,F) \leq \delta \}$ is measurable with respect to the $\sigma$-field on $\mathscr{E}$ generated by the subspace topology on $\mathscr{E} \subset \mathcal{E}$ (that is weaker than the topology on $\mathscr{E}$);
    \item 
    We need that, in an auxiliary probability space where $\rho^{\varepsilon,\rho^\varepsilon_0,g^\varepsilon} \to \rho^{\rho_0,g}$ almost surely as $\mathcal{E}$-valued random variables up to subsequences (for example the one usually obtained by Skorokhod Representation Theorem), the convergence in $\mathcal{E}$ can be improved to a convergence in $\mathscr{E}$ up to extracting sub-subsequences;
    \item 
    We need that, for every compact $F \subset \mathscr{E}$ and $L>0$, the expectation of $d_\mathscr{E}(\rho^{\varepsilon,\rho^\varepsilon_0,g^\varepsilon},F)\wedge L$ converges to the expectation of  $d_\mathscr{E}(\rho^{\rho_0,g},F)\wedge L$.
\end{enumerate}
We believe this approach is very powerful and can produce satisfactory results in several settings. 
For instance, it allows one to show the validity of \emph{sharp} LDPs, namely LDPs in the smallest possible space $\mathscr{X}$ in which a stochastic processes takes values. In fact, more often than not the weak convergence approach is applied in a (Polish) space $\mathcal{X}$ where the laws of the processes are tight as a consequence of uniform bounds in $\mathscr{X}$ and compact embeddings. In particular, the topology in $\mathcal{X}$ is often weaker than the one in $\mathscr{X}$.

\subsection{Bibliographic discussion} \label{ssec:biblio}
Non-uniqueness of weak solutions to the transport equation \eqref{eq:transport} has been established via convex integration techniques in the series of papers \cite{MoSz18,MoSa20,Mo20}. Their construction has been refined in \cite{BrCoDe21,BrCoKu}, where the authors were even able to prescribe the sign of $\rho$. 
We also mention the contributions \cite{ChLu21,ChLu24}, where the authors produce non-unique ``intermittent'' solutions (i.e. with low integrability in time), and the very recent \cite{Colombo^2Kumar}. Even if we are in the presence of a non-uniqueness phenomenon we can hope for a selection mechanism, due to the uniqueness of renormalized solutions described by \cite{diperna1989ordinary}. This has been investigated in the deterministic literature by \cite{BoCiCr22}, who introduced a viscous regularization of the system, and extending previous results by
\cite{CrSp15, CoDrEl22, NuSeWi21, CiCrSp21} for the 2D Euler equations. As described above, we address the problem of selection among weak solutions of the transport equation from a different viewpoint, employing regularizing features of the transport noise. 
Starting from the seminal work \cite{veretennikov1981strong}, it is nowadays well known that noise can help restore uniqueness in the solution theory of ordinary and partial differential equations.  Without the pretentiousness to be exhaustive, let us mention some instances of the realization of this phenomenon. As shown in \cite{DP_regular_1, DP_regular_2,cerrai2013pathwise}, additive noise can restore uniqueness in infinite dimensional stochastic differential equations. For genuine partial differential equations, the situation is much more complex and additive noise does not produce satisfactory results. On the contrary, a Stratonovich noise of transport type has proven to provide a much more robust framework to restore uniqueness in partial differential equations. 
Smooth transport noise has been considered in 
\cite{FlaGubPri10, delarue2014noise, flandoli2011full, Be_Fl, flandoli2021high,flandoli2021delayed, agresti2024global}, restoring uniqueness and/or preventing blow-up in several equations of interest. 
More recently, the Kraichnan noise of \cite{Kr68} has been considered in \cite{CoMa23+}, showing that it regularizes 2D Euler equations and providing uniqueness in a setting where it is false without noise, cf. \cite{vishik2018instability,vishik2018instability2}. The result of \cite{CoMa23+} has been subsequently extended by \cite{JiLu24,BaGaMa24,JiLu25+} to more general settings, such as mSQG and Boussinesq equations. However, the selection properties of this noise were unknown until now. This is the content of our main results \autoref{thm:convergence}, \autoref{thm:LDP}, \autoref{thm:strong_LDP}, which address this problem by identifying the zero-noise limit and establishing Large Deviations Principles. In particular, as already discussed above, we apply the so-called weak convergence approach to get \autoref{thm:LDP} and we generalize it to a more general setting in order to prove \autoref{thm:strong_LDP}. The weak convergence approach to Large Deviations was developed by \cite{budhiraja2000variational, Bud_Dup,salins2019uniform}, and it is nowadays a very popular method for establishing LDPs for the laws of solutions to SPDEs. We mention, among many contributions, the works \cite{FeGe23,brzezniak2017large, CeDe19, CePa19, cerrai2024nonlinear, bessaih2012large,galeati2024ldp,butori2024large}.

\section{Convergence to Renormalized Solution}\label{sec:convergence}

The goal of this section is to prove \autoref{thm:convergence}.
Its proof is a direct consequence of the following lemma.
\begin{lem}\label{lem_Backward_convergence}
Let $\rho^{\varepsilon}$ be the unique probabilistically strong, analytically weak solution of the stochastic transport equation \eqref{eq:stoch_trans_Ito} with initial condition $\rho_0\in L^{1}_x\cap L^{\infty}_x \cap B$ given by \autoref{prop_well_posed}. 
Then, as $\varepsilon \downarrow 0$, $\rho^\varepsilon$
converges in probability to the unique renormalized solution of the transport equation \eqref{eq:transport} given by \cite[Theorem II.3]{diperna1989ordinary}, as $\mathcal{E}$-valued random variables. If moreover $\operatorname{div}\, b=0$ the convergence in probability above holds as $L^\infty_tL^2_x$-valued random variables. 
\end{lem}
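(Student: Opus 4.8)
The plan is in two stages. First I would prove the convergence in $\mathcal{E}$ by a stochastic compactness argument combined with the DiPerna--Lions uniqueness of renormalized solutions; then, when $\dvg b = 0$, I would upgrade it to $L^\infty_t L^2_x$ by exploiting the energy balance for $\rho^\varepsilon$ together with the conservation of the $L^2_x$ norm of the limit. For the first stage, the starting point is that the bounds of \autoref{prop_well_posed} (with $g=0$, $\kappa=0$) are uniform in $\varepsilon\in(0,1)$: since the $\sigma_k$ are divergence-free, the evolution of $\|\rho^\varepsilon_t\|_{L^r_x}$ carries no martingale term and the It\=o correction exactly matches the viscous term, so for every $r\in[1,\infty]$ one has the pathwise bound $\|\rho^\varepsilon_t\|_{L^r_x}\le \exp(\|\dvg b\|_{L^1_t L^\infty_x})\|\rho_0\|_{L^r_x}$; in particular, using $\rho_0\in L^\infty_x$, the family $\{\rho^\varepsilon\}$ is bounded in $L^\infty_\omega L^\infty_t(L^2_x\cap L^p_x\cap L^\infty_x)$, and from the weak It\=o formulation one also gets $\mathbb{E}\|\rho^\varepsilon\|^n_{C^\gamma_t H^{-\sigma}_x}\lesssim 1$ for $\sigma>d/2+1$ and $\gamma<1/2$, the martingale increment being of order $\varepsilon$ in $L^2_\omega H^{-\sigma}_x$. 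Via the compact embedding of $H^{-\sigma}_x$ into $\tilde{H}^{-1/n}_x$, Arzel\`a--Ascoli, and the metrizability of the weak topology, these bounds yield tightness of the laws of $\rho^\varepsilon$ in $\mathcal{E}=C_t\tilde{H}^-_x\cap C_t\mathcal{B}$. By Prokhorov and Skorokhod representation on the Polish space $\mathcal{E}$, along a subsequence there are copies $(\tilde\rho^\varepsilon,\tilde W^\varepsilon)$ on an auxiliary probability space with $\tilde\rho^\varepsilon\to\tilde\rho$ almost surely in $\mathcal{E}$; passing to the limit in the weak It\=o formulation tested against $\phi\in C^\infty_c(\R^d)$ — the drift term converging by dominated convergence (pointwise from $\tilde\rho^\varepsilon_s\rightharpoonup\tilde\rho_s$ in $L^2_x\cap L^p_x$ and the uniform $L^\infty_x$ bound, domination from the uniform $L^2_x$ and $L^\infty_x$ bounds), the martingale term being $O(\varepsilon)$ and the viscous term $O(\varepsilon^2)$ — one finds that $\tilde\rho$ is a bounded distributional solution of \eqref{eq:transport} with datum $\rho_0$.

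\textbf{Identification of the limit.} Since $\tilde\rho\in L^\infty_t L^\infty_x$ with $b\in W^{1,1}_{loc}$ and $\dvg b\in L^1_t L^\infty_x$, the DiPerna--Lions commutator lemma shows that $\tilde\rho$ is a renormalized solution, so by \cite[Theorem II.3]{diperna1989ordinary} it coincides with the unique renormalized solution $\rho$ of \eqref{eq:transport} with datum $\rho_0$ (alternatively, one may invoke the DiPerna--Lions stability of renormalized solutions under the vanishing noise and viscosity). As this limit is deterministic and independent of the subsequence, $\rho^\varepsilon\to\rho$ in law, hence in probability, as $\mathcal{E}$-valued random variables.

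\textbf{Upgrade to $L^\infty_t L^2_x$ when $\dvg b = 0$.} Here the renormalized solution has $\|\rho_t\|_{L^2_x}=\|\rho_0\|_{L^2_x}$ for all $t$ (renormalization with $\beta(s)=s^2$), and being weakly $L^2_x$-continuous with constant norm it is strongly $L^2_x$-continuous, so $K:=\{\rho_t:t\in[0,T]\}$ is compact in $L^2_x$. By \autoref{lem:beta_rho} (the energy balance, obtained adapting \cite{DrGaPa25+} and crucially using the Sobolev regularity $L^2_\omega L^2_t H^{1-\alpha-\delta}_x$ of \autoref{prop_well_posed}), integrating the local balance in space gives $\|\rho^\varepsilon_t\|^2_{L^2_x}=\|\rho_0\|^2_{L^2_x}-\mathcal{D}^\varepsilon([0,t]\times\R^d)$ with $\mathcal{D}^\varepsilon\geq 0$, so $t\mapsto\|\rho^\varepsilon_t\|^2_{L^2_x}$ is non-increasing. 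Working along a subsequence converging almost surely in $\mathcal{E}$, for each fixed $t$ one has $\rho^\varepsilon_t\rightharpoonup\rho_t$ in $L^2_x$, whence $\liminf_\varepsilon\|\rho^\varepsilon_t\|_{L^2_x}\geq\|\rho_t\|_{L^2_x}=\|\rho_0\|_{L^2_x}$ by weak lower semicontinuity, while $\|\rho^\varepsilon_t\|_{L^2_x}\leq\|\rho_0\|_{L^2_x}$; hence $\|\rho^\varepsilon_t\|_{L^2_x}\to\|\rho_0\|_{L^2_x}$ for every $t$. Taking $t=T$ gives $\mathcal{D}^\varepsilon([0,T]\times\R^d)=\|\rho_0\|^2_{L^2_x}-\|\rho^\varepsilon_T\|^2_{L^2_x}\to 0$, and since $\mathcal{D}^\varepsilon([0,t]\times\R^d)\leq\mathcal{D}^\varepsilon([0,T]\times\R^d)$ this yields $\sup_{t}\big|\,\|\rho^\varepsilon_t\|^2_{L^2_x}-\|\rho_0\|^2_{L^2_x}\,\big|\to 0$. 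Finally, from $\|\rho^\varepsilon_t-\rho_t\|^2_{L^2_x}=\|\rho^\varepsilon_t\|^2_{L^2_x}+\|\rho_t\|^2_{L^2_x}-2\langle\rho^\varepsilon_t,\rho_t\rangle$ I would conclude $\sup_t\|\rho^\varepsilon_t-\rho_t\|_{L^2_x}\to 0$: the first two terms converge uniformly in $t$ to $2\|\rho_0\|^2_{L^2_x}$, while $\langle\rho^\varepsilon_t,\rho_t\rangle\to\|\rho_0\|^2_{L^2_x}$ uniformly in $t$ by covering $K$ with finitely many $L^2_x$-balls around functions in $C^\infty_c(\R^d)$, against which the pairings converge uniformly in $t$ thanks to the $C_t\tilde{H}^-_x$ convergence, the remainder being controlled by the uniform $L^2_x$ bound on $\rho^\varepsilon$. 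This gives almost sure convergence along the subsequence (the supremum being a measurable function of $\omega$, as $t\mapsto\|\rho^\varepsilon_t-\rho_t\|_{L^2_x}$ is lower-semicontinuous), and since every subsequence has a further a.s.-convergent one, $\rho^\varepsilon\to\rho$ in probability as $L^\infty_t L^2_x$-valued random variables.

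\textbf{Main obstacle.} The routine part is the $\mathcal{E}$-convergence; the stage requiring genuinely new input is the last one, where uniform-in-time weak $L^2_x$ convergence must be promoted to a strong one. This is extracted from the interplay of the energy balance of \autoref{lem:beta_rho} — in particular the non-negativity and monotonicity of the anomalous-dissipation measure $\mathcal{D}^\varepsilon$, whose very existence relies on the Sobolev regularization $H^{1-\alpha-\delta}_x$ produced by the Kraichnan noise and would fail for a spatially smooth noise — the conservation of the $L^2_x$ norm of the renormalized limit, and a compactness argument on the trajectory $\{\rho_t\}_{t\in[0,T]}$ to make the convergence uniform in time.
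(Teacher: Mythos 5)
Your first stage (tightness in $\mathcal{E}$ via the uniform $L^\infty_\omega L^\infty_t(L^2_x\cap L^\infty_x)$ and $C^\gamma_t H^{-\sigma}_x$ bounds, Skorokhod representation, identification of the bounded limit as the unique DiPerna--Lions renormalized solution, Gyöngy--Krylov to upgrade to convergence in probability) is essentially the paper's argument. Your second stage, however, is a genuinely different and noticeably shorter route. The paper never uses the global energy balance: it works with the \emph{local} balance of \autoref{lem:beta_rho} tested against spatial cutoffs $\phi$ and a temporal cutoff $\psi$, first proving strong convergence in $L^2_tL^2_x$, then essentially-uniform convergence of the localized energies and of $\rho^{\varepsilon}$ in $L^2_{loc}$, and finally a separate uniform non-concentration estimate on large balls (via the cutoffs $\Psi_{R,R'}$, $\Psi_R$) before assembling the $L^\infty_tL^2_x$ convergence. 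You instead integrate the balance globally to get $\|\rho^\varepsilon_t\|^2_{L^2_x}=\|\rho_0\|^2_{L^2_x}-\mathcal{D}^\varepsilon([0,t]\times\R^d)$, exploit monotonicity of $t\mapsto\mathcal{D}^\varepsilon([0,t]\times\R^d)$ together with weak lower semicontinuity at the terminal time to get \emph{uniform} convergence of the norms, and then promote uniform weak convergence to uniform strong convergence by covering the compact trajectory $\{\rho_t\}\subset L^2_x$ with finitely many balls around smooth compactly supported centers. This bypasses the paper's Steps 2--4 entirely (spatial tightness is absorbed into the convergence of the full norms), at the price of having to justify the passage $\phi^n\uparrow 1$ in the local balance — which the paper itself carries out in the proof of \autoref{cor:LDP_dissipation}, so this is available. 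The paper's localized machinery is arguably retained because it is reused in \autoref{stability_theorem}, where random drifts make the argument more delicate.

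One technical point to repair: the identity $\|\rho_0\|^2_{L^2_x}-\|\rho^\varepsilon_t\|^2_{L^2_x}=\mathcal{D}^\varepsilon([0,t]\times\R^d)$ holds only for almost every $t$, so you cannot literally "take $t=T$". Either use the paper's device of extending $b$ by zero to $[0,T+1]$ and evaluating at some admissible $t_0>T$, or note that by weak continuity of $\rho^\varepsilon$ and lower semicontinuity one has $\mathcal{D}^\varepsilon([0,T)\times\R^d)\leq\|\rho_0\|^2_{L^2_x}-\|\rho^\varepsilon_T\|^2_{L^2_x}\to 0$, which suffices since the conclusion is an essential supremum in time. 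With this fix your argument closes.
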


Before going on, let us mention that the notion of convergence in probability in the non-separable metric spaces $\mathscr{E}$ and $L^\infty_t L^2_x$, which is the content of \autoref{thm:convergence} and \autoref{lem_Backward_convergence}, is well-defined since the limit object is deterministic. 
Indeed, the events required to define the notion of convergence in probability are measurable with respect to the $\sigma$-field $\mathcal{F}$, cf. \cite[Page 27]{billingsley2013convergence}.

\begin{proof}[Proof of \autoref{thm:convergence}, assuming \autoref{lem_Backward_convergence}]
Let us denote by $\rho$ the unique renormalized solution of the transport equation with initial datum $\rho_0$ given by \cite{diperna1989ordinary}. Let us fix $\delta>0$.  Due to the stability of renormalized solutions given by \cite[Theorem II.7]{diperna1989ordinary}  we can find $0<\theta \ll 1$ small enough and initial conditions $\rho_0^1\in L^1_x\cap L^{\infty}_x$ and $ \rho_0^2\in L^2_x \cap L^p_x$ such that
\begin{align*}
    \rho_0=\rho_0^1+\rho_0^2,
    \quad 
    \norm{\rho_0^1}_{L^2_x\cap L^p_x}\leq 1,
    \quad  
    \norm{\rho_0^2}_{L^2_x \cap L^p_x}\leq \theta,
\end{align*}
and moreover, denoting by $\rho^1 $ (resp. ${\rho}^{2}$) the unique renormalized solutions of the transport equation with initial condition ${\rho}_0^1$ (resp. ${\rho}_0^2$):
\begin{align}\label{estimates_renormalized_splitting}
 \rho={\rho}^{1}+{\rho}^{2},
 \quad
 \ d_{\mathscr{E}}({\rho},\rho^1) \leq \exp(\|\dvg \, b \|_{L^1_tL^\infty_x})\theta \leq \frac{\delta}{8}.
\end{align}
Secondly, let us choose a sequence $\varepsilon_n \downarrow 0$ such that $\varepsilon_n<\overline{\varepsilon}$ for every $n \in \N$, where $\overline{\varepsilon}$ is taken sufficiently small such that $\lVert \rho_0-\rho_0^{\varepsilon_n}\rVert_{L^2_x \cap L^p_x} \leq \theta$ for every $\varepsilon_n$. This is possible since $\rho_0^\varepsilon \to \rho_0$ in $B$ by assumption.
Furthermore, let us introduce 
\begin{align*}
\rho_0^{\varepsilon_n,1} := \rho_0^1\in L^1_x\cap L^{\infty}_x,
\quad
\rho_0^{\varepsilon_n,2} := \rho_0^{\varepsilon_n}-\rho_0+\rho_0^2\in L^2_x \cap L^p_x.
\end{align*}
In this way we have
\begin{align*}
\rho_0^{\varepsilon_n,1}+\rho_0^{\varepsilon_n,2}=\rho_0^{\varepsilon_n}, 
\quad
\sup_{\varepsilon_n\leq \overline{\varepsilon}}\norm{\rho_0^{\varepsilon_n,2}}_{L^2_x \cap L^p_x}\leq 2\theta.    
\end{align*} 
Let us denote by $\rho^{\varepsilon_n,1}$ (resp. $\rho^{\varepsilon_n,2}$) the unique weak solution of the stochastic transport equation with initial condition $\rho_{0}^{\varepsilon_n,1}$ (resp. $\rho_0^{\varepsilon_n,2}$) given by \autoref{prop_well_posed}. 
By linearity of solutions to \eqref{eq:rho_general} and the estimate \eqref{eq:bound_rho_Linfty_Lp} on the $L^\infty_\omega L^\infty_t (L^2_x \cap L^p_x)$ norm of the solution of the stochastic transport, we have
\begin{align*}
    \rho^{\varepsilon_n}
=
\rho^{\varepsilon_n,1}+\rho^{\varepsilon_n,2}
\end{align*} and
\begin{align} 
\label{estimate_stochastc_splitting}
\sup_{\varepsilon_n\leq \overline{\varepsilon}}
\sup_{t\in [0,T]}& \norm{\rho^{\varepsilon_n,2}_t}_{ L^2_x \cap L^p_x}
\leq 2 \exp(\|\dvg \, b \|_{L^1_tL^\infty_x})\theta  
\leq \frac{\delta}{4},
\quad \mathbb{P} \mbox{-almost surely}.
\end{align}
Therefore, combining  \eqref{estimates_renormalized_splitting} and \eqref{estimate_stochastc_splitting} and using $d_{\mathcal{E}}(u,v) \leq 2 \sup_{t \in [0,T]}\| u_t-v_t \|_{L^2_x \cap L^p_x}$ we obtain
\begin{align*}
   d_{\mathcal{E}}(\rho^{\varepsilon_n},\rho)
   &\leq 
   d_{\mathcal{E}}(\rho^{\varepsilon_n},{\rho}^{\varepsilon_n,1})
   +
   d_{\mathcal{E}}(\rho^{\varepsilon_n,1},{\rho}^{1})
   +
   d_{\mathcal{E}}({\rho}^1,{\rho})
   \\
   &\leq 
   \frac{3}{4} \delta
   +
   d_{\mathcal{E}}(\rho^{\varepsilon_n,1},{\rho}^{1}),
   \quad \mathbb{P} \mbox{-almost surely}.
\end{align*}
In particular, it holds
\begin{align*}
\mathbb{P}\left(d_{\mathcal{E}}(\rho^{\varepsilon_n},{\rho})>\delta\right)
\leq 
\mathbb{P}\left(d_{\mathcal{E}}(\rho^{\varepsilon_n,1},{\rho}^{1})>\frac{\delta}{4}\right) \to 0
\end{align*}
as $n \to \infty$ by the first part of \autoref{lem_Backward_convergence}. Since the limit $\rho$ does not depend on the choice of the subsequence $\varepsilon_n$, we deduce that the whole sequence $\rho^\varepsilon$ is converging in probability towards $\rho$.

It only remains to prove the convergence $\rho^\varepsilon \to \rho$ in $\mathscr{E}$, assuming $\dvg \, b = 0$. 
Arguing as before we have along any subsequence $\varepsilon_n \downarrow 0$
\begin{align*}
\mathbb{P}\left(d_{\mathscr{E}}(\rho^{\varepsilon_n},{\rho})>\delta\right)
\leq 
\mathbb{P}\left(d_{\mathscr{E}}(\rho^{\varepsilon_n,1},{\rho}^{1})>\frac{\delta}{4}\right),
\end{align*}
where $\rho^{\varepsilon_n,1}$ and $\rho^1$ are as above.
By \autoref{lem_Backward_convergence} we know that $\rho^{\varepsilon_n,1}$ converges to $\rho^1$ in probability when $n \to \infty$, as $L^\infty_t L^2_x$ random variables. In order to get the convergence in $\mathscr{E}$ we just have to prove
\begin{align} \label{eq:convergence_in_probability_Lp}
\mathbb{P}\left( \sup_{t \in [0,T]}\|\rho^{\varepsilon_n,1}_t-{\rho}^{1}_t\|_{L^p_x}> \delta'\right) \to 0,
\quad
\forall \delta'>0.
\end{align}
Since both $\rho^{\varepsilon_n,1}$ and $\rho^1$ have initial condition $\rho_0^1 \in L^1_x \cap L^\infty_x$, by \eqref{eq:bound_rho_Linfty_Lp} we have
\begin{align*}
\sup_{t \in [0,T]}\|\rho^{\varepsilon_n,1}_t-{\rho}^{1}_t\|_{L^1_x \cap L^\infty_x}
\leq
2\| \rho_0^1 \|_{L^1_x \cap L^\infty_x}.
\end{align*}
Since $p \in (1,\infty)$, by interpolation we have for some $\beta = \beta(p) \in (0,1)$
\begin{align*}
\sup_{t \in [0,T]}\|\rho^{\varepsilon_n,1}_t-{\rho}^{1}_t\|_{L^p_x}
\leq 2 \| \rho_0^1 \|_{L^1_x \cap L^\infty_x}^{\beta}
\sup_{t \in [0,T]}\|\rho^{\varepsilon_n,1}_t-{\rho}^{1}_t\|_{L^2_x}^{1-\beta} 
,
\end{align*}
implying \eqref{eq:convergence_in_probability_Lp} since, by assumption, $\rho^{\varepsilon_n,1}$ converges to $\rho^1$ in probability as $L^\infty_t L^2_x$ random variables.
The thesis then follows again by the arbitrariness of the subsequence $\varepsilon_n$.
\end{proof}

The remainder of this section is devoted to the proof of \autoref{lem_Backward_convergence}.
We split the proof into two parts: 
\begin{itemize}
    \item In \autoref{weak_convergence_eps} we prove the convergence of $\rho^\varepsilon$ in probability as $\mathcal{E}$-valued random variables, and we identify the limit as an analytically weak solution of the deterministic transport equation \eqref{eq:transport} with initial condition $\rho_0 \in L^1_x \cap L^\infty_x \cap B$. By DiPerna-Lions theory, the limit is the unique renormalized solution.
    \item In \autoref{strong_convergence_eps} we upgrade the convergence in $\mathcal{E}$ to the convergence in $L^\infty_t L^2_x$.
\end{itemize}

As one of the ingredients of our proofs, we will rely on the following compactness criterion, which is a simple consequence of \cite[Corollary 5]{simon1986compact} and \cite[Lemma 2.1]{brzezniak2014existence}. We state and prove it here for future references. We will later apply the lemma with $\gamma,\sigma,$ and $\beta = 1-\alpha-\delta$ as in the statement of \autoref{prop_well_posed}.
\begin{lem}\label{Compactness_Criterion}
Let $\gamma,\sigma>0$ and $\beta \in \R$. 
\begin{enumerate}
\item 
A set $\mathcal{K}\subset \mathcal{E}$ is relatively compact in $\mathcal{E}$ if \begin{align}\label{time_compactness_lemma}
\sup_{u\in \mathcal{K}}\norm{u}_{C^{\gamma}_t\tilde{H}^{-\sigma}_x}<+\infty.
\end{align}
\item 
A set $\mathcal{K}'\subset \mathcal{E}\cap L^2_t{\tilde{H}}^{\beta-}_x$ is relatively compact in $\mathcal{E}\cap L^2_t{\tilde{H}}^{\beta-}_x$ if \eqref{time_compactness_lemma} holds and  
\begin{align}\label{space_compactness_lemma}
\sup_{u\in \mathcal{K}'}\norm{u}_{L^2_tH^{\beta}_x}<+\infty.
\end{align}
\end{enumerate}
\end{lem}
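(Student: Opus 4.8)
The plan is to deduce both parts from an Aubin--Lions--Simon-type compactness theorem, cf.\ \cite[Corollary 5]{simon1986compact}, supplemented by the Brzeźniak--Hausenblas lemma \cite[Lemma 2.1]{brzezniak2014existence} to cope with the weak part of the topology of $\mathcal{E}$; since both $(\mathcal{E},d_\mathcal{E})$ and $\mathcal{E}\cap L^2_t\tilde H^{\beta-}_x$ are metrizable, it is enough to argue with sequences. The preliminary ingredient is the observation that, although $H^s_x\embed H^{s'}_x$ is never compact on $\R^d$, the \emph{weighted} embeddings $L^2_x\cap L^p_x\embed\tilde H^{-1/n}_x$ and $H^s_x\embed\tilde H^{s-1/n}_x$ ($s\in\R$, $n\ge 1$) are compact: I would prove this from the decay of $w$, writing $vw=\chi_R vw+(1-\chi_R)vw$ with a smooth cutoff $\chi_R\equiv1$ on $\{|x|\le R\}$ supported in $\{|x|\le 2R\}$, so that the first term is $H^s_x$-bounded with fixed compact support (hence relatively compact in $H^{s-1/n}_x$ by Rellich), while multiplication by $(1-\chi_R)w$ has operator norm $H^s_x\to H^{s-1/n}_x$ tending to $0$ as $R\to\infty$ because $w$ and all its derivatives decay.

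For part (1), the key observation is that membership $\mathcal{K}\subset\mathcal{E}\subset C_t\mathcal{B}$ already forces the uniform bound $\sup_{u\in\mathcal{K}}\|u\|_{L^\infty_t(L^2_x\cap L^p_x)}\le R_0:=\exp(\|\dvg b\|_{L^1_tL^\infty_x})$. Combining this with the hypothesis $\sup_{u\in\mathcal{K}}\|u\|_{C^\gamma_t\tilde H^{-\sigma}_x}<\infty$ and the first weighted embedding, \cite[Corollary 5]{simon1986compact} (with $X=L^2_x\cap L^p_x$, $B=\tilde H^{-1/n}_x$, $Y=\tilde H^{-\sigma}_x$) yields relative compactness of $\mathcal{K}$ in $C_t\tilde H^{-1/n}_x$ for every $n$ with $1/n<\sigma$, which is enough since the norms $\|\cdot\|_{\tilde H^{-1/n}_x}$ are nondecreasing in $n$. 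A diagonal extraction then produces, from any sequence in $\mathcal{K}$, a subsequence converging in $C_t\tilde H^{-1/n}_x$ for all $n$, hence in $C_t\tilde H^-_x$, to a limit $u$; and \cite[Lemma 2.1]{brzezniak2014existence} (applicable since $L^2_x\cap L^p_x$ is reflexive, separable, and continuously embedded in $\tilde H^-_x$) shows that $u$ is weakly continuous into $L^2_x\cap L^p_x$ with the same bound $R_0$, i.e.\ $u\in C_t\mathcal{B}$, so $u\in\mathcal{E}$ and $\mathcal{K}$ is relatively compact in $\mathcal{E}$.

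For part (2), I would first apply part (1) to extract a subsequence converging to some $u$ in $\mathcal{E}$, and then upgrade the convergence in $L^2_t\tilde H^{\beta-}_x$ by applying the $L^2$-in-time version of \cite[Corollary 5]{simon1986compact} with $X=H^\beta_x$, $B=\tilde H^{\beta-1/n}_x$, $Y=\tilde H^{-\sigma}_x$ (using the second weighted embedding together with the bounds in $L^2_tH^\beta_x$ and $C^\gamma_t\tilde H^{-\sigma}_x$), followed by a further diagonalization in $n$. The resulting $L^2_t\tilde H^{\beta-}_x$-limit must coincide with $u$ because both represent the same distribution on $(0,T)\times\R^d$; putting the two modes of convergence together gives relative compactness in $\mathcal{E}\cap L^2_t\tilde H^{\beta-}_x$.

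The step I expect to need the most care is the identification $u\in\mathcal{E}$ in part (1): Simon's theorem delivers only a limit that is strongly continuous into the weighted space $\tilde H^-_x$, whereas the topology of $\mathcal{E}$ additionally requires weak continuity into $L^2_x\cap L^p_x$ with a prescribed norm bound, and this is exactly where both the a priori membership of the sequence in $\mathcal{E}$ (hence the $R_0$ bound) and the Brzeźniak--Hausenblas lemma are used. The only other non-routine point is the two weighted compact embeddings, handled by the decay-plus-Rellich argument above.
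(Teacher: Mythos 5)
Your proposal is correct and follows essentially the same route as the paper: Simon's compactness criterion combined with the compactness of the weighted embeddings $L^2_x\Subset\tilde H^{-\eta}_x$ and $H^\beta_x\Subset\tilde H^{\beta-\eta}_x$, plus the Brze\'zniak--Hausenblas lemma to obtain the convergence in $C_t\mathcal{B}$. The only cosmetic differences are that you prove the weighted compact embedding by hand (the paper cites it) and apply the Brze\'zniak--Hausenblas lemma directly to the intersection $L^2_x\cap L^p_x$, whereas the paper applies it in $L^2_x$ and then upgrades to the $L^p_x$ weak topology by testing against a dense set in $L^{p'}_x$; both are fine.
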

\begin{rmk}
Notice that a uniform bound in $L^2_t (L^2_x \cap L^p_x)$ is implicitly assumed in the definition of the space $\mathcal{E}$. 
\end{rmk}
\begin{proof}
    In order to prove the first claim, it is enough to show that for each $0<\eta\leq \sigma$ and sequence $\{u_n\}_{n \in \N} \subset \mathcal{K}$ there exists a (non-relabeled) subsequence and $u \in C_t\mathcal{B}\cap C_t\tilde{H}^{-\eta}_x$ such that $u_n\rightarrow u$ in $C_t\mathcal{B}\cap C_t\tilde{H}^{-\eta}_x$.   
By \cite[Lemma A.4]{BaGaMa24} we have 
\begin{align*}
    L^2_x \Subset \tilde{H}^{-\eta}_x \subset \tilde{H}^{-\sigma}_x,
\end{align*}
with the first embedding being compact.
Therefore by \cite[Corollary 5]{simon1986compact}, the embedding 
\begin{align}\label{first_compact_embedding}
    L^{\infty}_tL^2_x\cap C^{\gamma}_t\tilde{H}^{-\sigma}_x \Subset C_t\tilde{H}^{-\eta}_x
\end{align}
is compact. 
Next, denote $R:= \exp(\| \dvg \, b \|_{L^1_t L^\infty_x})$. In virtue of \cite[Chapter 3.1 Lemma 1.4]{temam2024navier} we have the embedding
\begin{align}\label{second_compact_embedding}
   \{ u \in  L^\infty_t(L^2_x \cap L^p_x) \,: \, \|u\|_{L^\infty_t (L^2_x\cap L^p_x)} \leq R \}\cap C^{\gamma}_t\tilde{H}^{-\sigma}_x \subset C_t \mathcal{B}.
\end{align}
We want to show that this embedding is compact.
Since $C_t\mathcal{B}$ is metric, it is enough to study sequential compactness. Let $\{u_k\}_{k \in \N}$ be a bounded sequence in the former space. By the compact embedding \eqref{first_compact_embedding} there exists $\eta > 0$, a (non-relabeled) subsequence of $u_k$'s, and $u \in C_t \tilde{H}^{-\eta}_x$ such that 
\begin{align*}
    u_{k}\rightarrow u\quad \text{in }C_t \tilde{H}^{-\eta}_x.
\end{align*}
The previous convergence and the uniform bound in $L^\infty_t L^2_x$ imply, by \cite[Lemma 2.1]{brzezniak2014existence}, that the limit $u$ belongs to $C_t\mathcal{B}_2$, where $\mathcal{B}_2$ is the closed ball in $L^2_x$ of radius $\exp(\|\dvg \ b\|_{L^1_t L^\infty_x})$ endowed with the weak topology, and moreover $u_{k}\rightarrow u$ in $C_t\mathcal{B}_2$. 
In particular for each $h\in L^2_x$
\begin{align*}
    \sup_{t\in [0,T]} \left| \int_{\R^d} (u_{k}(t,x)-u(t,x))h(x) dx\right|\rightarrow 0.
\end{align*}
In order to prove convergence in $C_t \mathcal{B}$, it suffices to show that the same holds also for each $h\in L^{p'}_x$, where $p' \in (1,\infty)$ is the H\"older conjugate of $p$. 
For each $c>0$, let $\tilde{h}\in C^{\infty}_c(\R^d)$ be such that $\|{\tilde{h}-h}\|_{L^{p'}_x}<c$, then
\begin{align*}
    \limsup_{k\rightarrow +\infty}&  \sup_{t\in [0,T]}\left| \int_{\R^d} (u_{k}(t,x)-u(t,x))h(x) dx \right|  
    \\
    &\leq 
    \limsup_{k\rightarrow +\infty}  \sup_{t\in [0,T]}\left| \int_{\R^d} (u_{k}(t,x)-u(t,x)) \tilde h(x) dx \right|  + 2\|{\tilde{h}-h}\|_{L^{p'}_x}  
    \leq 2c.
\end{align*}
Combining the results above the first claim follows by the completeness of $\mathcal{E}$. 

The second one is analogous up to observing that by \cite[Lemma A.4]{BaGaMa24} and \cite[Corollary 5]{simon1986compact} also the embdedding
\begin{align*}
    L^2_tH^{\beta}_x\cap C_t^{\gamma}H^{-\sigma}_x
    \Subset L^2_t\tilde{H}^{\beta-\eta}_x
\end{align*}
is compact. We omit the easy details.
\end{proof}

\subsection{Proof of \autoref{lem_Backward_convergence}: Convergence in $\mathcal{E}$ to the unique renormalized solution}\label{weak_convergence_eps}
We will show that for each sequence $\varepsilon_n\rightarrow 0$, $\rho^{\varepsilon_n}$ converges in law to $\rho$ in the topology of $\mathcal{E}$. By Gyongy-Krylov criterion \cite[Lemma 1.1]{gyongy1996existence}, one can upgrade the convergence in law to a convergence in probability, giving the desired claim. Gyongy-Krylov criterion applies here by uniqueness of the limit, since $\mathcal{E}$ is a Polish space.
For the matter of readability we split the proof in two steps.

\emph{Step 1: Tightness and passage to an auxiliary probability space.}
Recall that we are working under the assumption $\rho_0 \in L^1_x \cap L^\infty_x \cap B$.
Thanks to the embedding \eqref{second_compact_embedding} and the estimates in the proof of \autoref{prop_well_posed}, more specifically estimates \eqref{eq:bound_rho_Linfty_Lp}, \eqref{eq:bound_rho_holder}, and \eqref{eq:bound_rho_sobolev}, we have the bounds 
\begin{align}\label{estimates_original_spaces}
\sup_{n\in \mathbb{N}}
\left(\mathbb{E}\lVert\rho^{\varepsilon_n}\rVert_{C^{\gamma}_tH^{-\sigma}_x} { + \varepsilon_n^2\mathbb{E} \| \rho^{\varepsilon_n} \|_{L^2_t H^{1-\alpha-\delta}_x}^2} \right)&\lesssim 1,
\\  \label{bounds_original_0}
\sup_{n\in \N}\norm{\rho^{\varepsilon_n}}_{L^{\infty}_t(L^1_x\cap L^{\infty}_x)}
&\lesssim 1,
\\
\sup_{n\in \N}\sup_{t\in [0,T]}\norm{\rho^{\varepsilon_n}_t}_{L^2_x\cap L^p_x} &\leq \exp(\|\dvg \, b \|_{L^1_tL^\infty_x}) \quad \mathbb{P} \mbox{-almost surely}.\label{bounds_original_1}  
\end{align}
Let us denote $\mathbb{B}_M$ the closure in $\mathcal{E}$ of the intersection between $\mathcal{E}$ and the ball in $ C^{\gamma}_t\tilde{H}^{-\sigma}_x$ of radius $M < \infty$ centered in $0$. Recall that elements in $\mathcal{E}$ are bounded in $L^\infty_t (L^2_x \cap L^p_x)$.
By \autoref{Compactness_Criterion}, $\mathbb{B}_M$ is a compact set in $\mathcal{E}$. By \eqref{bounds_original_1}, Markov's inequality and \eqref{estimates_original_spaces} we have
\begin{align}\label{estimate_1_compactness_law}
    \sup_{n\in \N}\mathbb{P}(\rho^{\varepsilon_n}\notin {\mathbb{B}_M} )  \lesssim \sup_{n\in \N}\mathbb{P}(\lVert \rho^{\varepsilon_n}\rVert_{C^{\gamma}_t H^{-\sigma}_x}>M) \lesssim \frac{1}{M},
\end{align}
which can be made arbitrarily small by choosing $M$ properly. Secondly, $W=\sqrt{\mathcal{Q}}\mathcal{W}$, where $\mathcal{W}$ is a cylindrical Brownian motion on $\mathbb{H}$. The latter can be identified by a family of real independent Brownian motions $\{W^{k}\}_{k\in \N }$, i.e. as a process on $C_t\R^{\N}$. Notice that the law of $\{W^{k}\}_{k\in \N }$ is independent of $n$.

Next we want to apply Prokhorov Theorem and Jakubowski version of Skorokhod Representation Theorem \cite{jakubowski1998almost,Brzezniak_skoro}.
The arguments are relatively standard and are detailed, for example, in \cite[Chapter 2]{flandoli2023stochastic}.
Up to passing to non-relabeled subsequences, we can find an auxiliary filtered probability space $(\tilde{\Omega},\tilde{\mathcal{F}},\{\tilde{\mathcal{F}}_t\}_{t \geq 0},\tilde{\mathbb{P}})$ and processes $(\tilde{\rho}^{\varepsilon_n},\{\tilde{W}^{\varepsilon_n,k}\}_{k\in \N})$ and $(\tilde{\rho},\{\tilde{W}^{k}\}_{k\in \N})$ on it, such that:
$i$) the processes $(\tilde{\rho}^{\varepsilon_n},\{\tilde{W}^{\varepsilon_n,k}\}_{k\in \N})$ and $({\rho}^{\varepsilon_n},\{{W}^{k}\}_{k\in \N})$ have the same law on $\mathcal{E}\times C_t\R^{\N}$; and $ii$) the following convergences are valid $\tilde{\PP}$-almost surely 
\begin{align} \label{convergence_rho}
    \tilde{\rho}^{\varepsilon_n}&\rightarrow \tilde{\rho} \qquad\qquad\, \textit{in } \mathcal{E},
    \\
    \{\tilde{W}^{\varepsilon_n,k}\}_{k\in \N}&\rightarrow \{\tilde{W}^{k}\}_{k\in \N} \quad \textit{in } C_t\R^{\N}.  \nonumber
\end{align}
Moreover, since $(\tilde{\rho}^{\varepsilon_n},\{\tilde{W}^{\varepsilon_n,k}\}_{k\in \N})$ has the same law of $({\rho}^{\varepsilon_n},\{{W}^{\varepsilon_n,k}\}_{k\in \N})$, the bounds \eqref{estimates_original_spaces} and \eqref{bounds_original_1} continue to hold in the auxiliary probability space and $\tilde{\rho}^{\varepsilon_n}$ is the unique probabilistically strong, analytically weak solution on the probability space $(\tilde{\Omega},\tilde{\mathcal{F}},\{\tilde{\mathcal{F}}_t\}_{t \geq 0},\tilde{\mathbb{P}})$ of the stochastic transport equation with initial datum $\rho_0$, given by \autoref{prop_well_posed}, with noise $\tilde{W}^{\varepsilon_n}=\sqrt{\mathcal{Q}}\tilde{\mathcal{W}}^{\varepsilon_n}$. 
As a consequence of \autoref{prop_well_posed} and the fact that $\rho_0 \in L^1_x \cap L^\infty_x$, also the bound \eqref{bounds_original_0} is valid in the auxiliary probability space. Furthermore, $\tilde{\rho}$ inherits the same $L^\infty_t L^\infty_x$ bound by relatively standard arguments (see for instance \cite[Lemma 3.5]{flandoli2021scaling}), namely
\begin{align} \label{eq:bound_tilde_rho_auxiliary}
    \norm{\tilde{\rho}}_{L^{\infty}_t L^{\infty}_x}\lesssim 1,\quad  \tilde{\mathbb{P}} \mbox{-almost surely}.
\end{align}

\emph{Step 2: Identification of the limit.} 
By \eqref{convergence_rho} we have $\tilde{\rho}^{\varepsilon_n}\rightarrow \tilde{\rho}$ in $\mathcal{E}$ almost surely, and we want to show that $\tilde\rho$ is the unique renormalized solution of the transport equation \eqref{eq:transport} with initial condition $\rho_0$.
To simplify notation we drop the $\sim$ in all the auxiliary objects defined above. 
Let $\phi\in C^{\infty}_c([0,T)\times \R^d)$. Due to relation \eqref{convergence_rho}, and more specifically to the convergence in $C_t\mathcal{B}$, it is easy to show that the following limits are valid $\PP$-almost surely as $n \to \infty$:
\begin{align*}
   \int_0^T \langle \rho^{\varepsilon_n}_s,\partial_s \phi_s\rangle ds&\rightarrow  \int_0^T \langle \rho_s,\partial_s \phi_s\rangle ds,
   \\
   \int_0^T \langle \rho_s^{\varepsilon_n}, \operatorname{div}\, b \phi_s \rangle ds&\rightarrow \int_0^T \langle \rho_s, \operatorname{div}\, b\phi_s \rangle ds, 
   \\ 
   \int_0^T \langle \rho_s^{\varepsilon_n}, b\cdot\nabla\phi_s \rangle ds&\rightarrow \int_0^T \langle \rho_s, b\cdot\nabla\phi_s \rangle ds, 
   \\ 
   \varepsilon_n\int_0^T \langle \rho^{\varepsilon_n}_s,\Delta \phi_s\rangle ds&\rightarrow 0.
\end{align*}
Moreover, thanks to the regularizing properties of the covariance of Kraichnan noise (see for example \cite[equation (2.2)]{GaGrMa24+}) and Sobolev embedding, it holds  
\begin{align*}
    \varepsilon_n^2\mathbb{E}\left[\lvert\sum_{k\in \N}\int_0^T\langle \rho^{\varepsilon_n}_s,\sigma_k\cdot \nabla\phi_s\rangle dW^{\varepsilon_n,k}_s\rvert^2\right]&\lesssim \varepsilon_n^2 \mathbb{E}\left[\int_0^T \lVert \nabla \phi_s \rho^{\varepsilon_n}_s\rVert^2_{L^1_x} ds\right]\\ & \lesssim \varepsilon_n^2 \mathbb{E}\left[\int_0^T \lVert  \rho^{\varepsilon_n}_s\rVert^2_{L^2_x} ds\right]\rightarrow 0.
\end{align*}
Therefore, up to passing to further non-relabeled subsequences, also 
\begin{align*}
    \varepsilon_n \sum_{k\in \N}\int_0^T\langle \rho^{\varepsilon_n}_s,\sigma_k\cdot \nabla\phi_s\rangle dW^{\varepsilon_n,k}_s\rightarrow 0,\quad\mathbb{P} \mbox{-almost surely}.
\end{align*}
Recall also that $\rho_0^\varepsilon \to \rho_0$ in $B$. Consequently, for each $\phi\in C^{\infty}_c([0,T)\times \R^d)$ we have the almost sure identity:
 \begin{align*}
     \langle \rho_0,\phi_0\rangle
     +
     \int_0^T  \langle \rho_s, \operatorname{div}\, b\phi_s\rangle ds+\int_0^T  \langle \rho_s, b\cdot\nabla\phi_s\rangle ds + \int_0^T \langle\rho_s,\partial_s\phi_s\rangle ds = 0.
 \end{align*}
By standard density argument, there exists a $\PP$-negligible set such that the relation above holds on its complementary for all $\phi\in C^{\infty}_c([0,T)\times \R^d)$. Namely, $\rho$ is $\mathbb{P} \mbox{-almost surely}$ a weak solution of the transport equation with initial condition $\rho_0$. 
Due to the regularity \eqref{eq:bound_tilde_rho_auxiliary} of $\rho$ and our assumptions on $b$, by DiPerna-Lions theory there exists a unique weak solution of the transport equation, and it coincides with the renormalized one. 
Therefore, $\mathbb{P} \mbox{-almost surely}$ we have $\rho^{\varepsilon_n}\rightarrow \rho$ in $\mathcal{E}$, which implies the convergence in law and completes the proof.

\subsection{Proof of \autoref{lem_Backward_convergence}: Strong convergence to the unique renormalized solution}\label{strong_convergence_eps}
Now we assume $\operatorname{div}\,b=0$, and we want to show convergence in probability $\rho^\varepsilon \to \rho$ as $L^\infty_tL^2_x$-valued random variables.

We divide the proof in some steps following somehow the ideas of \cite[Lemma 3.3]{CiCrSp21}. Preliminarily we observe that \autoref{prop_well_posed} and the results of \autoref{weak_convergence_eps} continue to hold on the time interval $[0,T+1]$, by extending the drift $b$ to be identically zero on $(T,T+1]$. 
This is for us just a convenient technical construction that we exploit in order to get the convergence on the desired time interval $[0,T]$.
To keep the notation as concise as possible, we denote $L^r_t$, $C_t$, etc. function spaces defined on the time interval $[0,T]$, and $L^r_{t,*}$, $C_{t,*}$, etc. their counterparts defined on the time interval $[0,T+1]$.
We split the proof in several steps.

\emph{Step 1: Strong convergence in $L^2_{t,*}L^2_x.$}
Due to the first part of \autoref{thm:convergence}, we already know that (up to passing to subsequences) 
\begin{align*}
    \rho^{\varepsilon_n}\rightarrow \rho\quad\text{in }\mathcal{E}_* := C_{t,*}\mathcal{B}\cap C_{t,*}\tilde{H}^{-},
    \quad \mathbb{P} \mbox{-almost surely}.
\end{align*}
The latter additionally implies that on the same full-probability set 
\begin{align*}
    \rho^{\varepsilon_n}\rightharpoonup \rho, 
    \quad\text{weakly in }L^2_{t,*}L^2_x.
\end{align*}
Therefore, in order to show $\rho^{\varepsilon_n}\rightarrow\rho$ strongly in $L^2_{t,*}L^2_x$, it is sufficient to show convergence of the norms.
By the bound \eqref{eq:bound_rho_Linfty_Lp} with $\dvg\,b=0$, the fact that $\rho^{\varepsilon_n} \in C_{t,*}\mathcal
B$ $\PP$-almost surely, and lower-semicontinuity of the $L^2_x$ norm with respect to weak convergence, we know that for every $n \in \N$ 
\begin{align*}
   \sup_{t \in [0,T+1]} \lVert \rho^{\varepsilon_n}_t\rVert_{L^2_x} \leq \lVert \rho_0\rVert_{L^2_x},
    \quad \mathbb{P} \mbox{-almost surely}, 
\end{align*}
whereas the limit $\rho$ satisfies $\lVert \rho_t\rVert_{L^2_x} = \lVert \rho_0\rVert_{L^2_x}$ for every $t \in [0,T+1]$, being it renormalized.
Therefore, recalling $\rho^{\varepsilon_n} \to \rho$ in $C_{t,*}\mathcal{B}$ and exploiting again lower-semicontinuity of the $L^2_x$ norm with respect to the weak convergence, we have $\PP$-almost surely:
\begin{align*}
\norm{\rho}_{L^2_{t,*} L^2_x}^2
&\leq 
\liminf_{n\rightarrow +\infty}\norm{\rho^{\varepsilon_n}}_{L^2_{t,*} L^2_x}^2
\leq 
\limsup_{n\rightarrow +\infty}\norm{\rho^{\varepsilon_n}}_{L^2_{t,*} L^2_x}^2
\leq 
(T+1)\lVert \rho_0\rVert_{L^2_x}^2
=
\norm{\rho}_{L^2_{t,*} L^2_x}^2,
\end{align*}
that implies the claim. 

\emph{Step 2: Essentially uniform convergence of the $L^2_x$ norm on bounded sets.}
Recall the bound \eqref{eq:bound_tilde_rho_auxiliary} on $\rho^{\varepsilon_n}$. Since $\rho_0\in L^1_x\cap L^{\infty}_x$, the same holds also for the renormalized solution $\rho$ of \eqref{eq:transport}, namely
    \begin{align}\label{bounds_renormalized}
   \sup_{t \in [0,T+1]} \lVert \rho^{\varepsilon_n}_t\rVert_{L^1_x \cap L^\infty_x} 
   +
   \sup_{t \in [0,T+1]}\lVert \rho_t\rVert_{L^1_x \cap L^\infty_x} \lesssim \lVert \rho_0\rVert_{L^1_x\cap L^{\infty}_x} \quad \mathbb{P} \mbox{-almost surely}.
\end{align}
Let us consider test functions $\phi\in C^{\infty}_c(\R^d)$ such that $0\leq \phi\leq 1$, and $\ \psi\in C^{\infty}_c([0,T+1))$ such that $0\leq \psi\leq 1,\ \psi|_{[0,T+\frac{1}{2}]}\equiv 1$. 
Let us define the quantities $f^{n}_{\phi}, f_{\phi}:[0,T]\rightarrow \R,$ as follows:
\begin{align*}
    &f^n_{\phi}(t)
    :=
    \int_{\R^d}\lvert \rho^{\varepsilon_n}_t(x)\rvert^2\phi(x)dx,
    \quad 
    f_{\phi}(t)
    :=
    \int_{\R^d}\lvert \rho_t(x)\rvert^2\phi(x)dx.
\end{align*}
Notice that $f^n_{\phi}(0)=f_{\phi}(0)$.
By \autoref{lem:beta_rho} there exists a $\PP$-negligible set $N \in \mathcal{F}$, $\mathbb{P}(N)=0$, and for every $n \in \N$ a non-negative random measure $\mathcal{D}^{\varepsilon_n} : \O \to \mathcal{M}_+([0,T+1] \times \R^d)$, with the following property.
For each $\omega\in N^c$ there exists a subset $\tau_{\omega}\subset [0,T+1]$ with full Lebesgue measure such that for each $t\in \tau_{\omega}$ and $n\in \N$
\begin{align*}
    f^n_{\phi}(t)
    &=
    f^n_{\phi}(0)+\int_0^t \langle b_s\cdot\nabla\phi, \lvert \rho^{\varepsilon_n}_s\rvert^2\rangle ds
    +
    \varepsilon_n \sum_{k\in \N}\int_0^t \langle \sigma_k\cdot\nabla\phi, \lvert \rho^{\varepsilon_n}_s\rvert^2\rangle dW^k_s 
    \\ 
    &\quad+
    \varepsilon_n^2\int_0^t  \langle\lvert \rho^{\varepsilon_n}_s\rvert^2,\Delta\phi\rangle ds
    -
    \langle \langle d\mathcal{D}^{\varepsilon_n}, \mathbf{1}_{[0,t]} \phi \rangle \rangle.
\end{align*}
On the other hand, since $\rho$ is a renormalized solution of the transport equation \eqref{eq:transport}, it holds for every $t \in [0,T+1]$:
\begin{align*}
   f_{\phi}(t)= f_{\phi}(0)+\int_0^t \langle b_s\cdot\nabla\phi, \lvert \rho_s\rvert^2\rangle ds. 
\end{align*}

Next we look at the evolution of the quantities $f^n_\phi(t) \psi(t)$ and $f_\phi(t) \psi(t)$, for $t \in [0,T+1]$. Recall that $\psi(0) = 1$ and $\psi(T+1) = 0$ by construction.
Due to previous steps and the uniform bound \eqref{bounds_renormalized}, we have the convergence
\begin{align*}
     f^n_{\phi}(0)
     &+
     \int_0^{T+1}\psi_s \langle b_s\cdot\nabla\phi, \lvert \rho^{\varepsilon_n}_s\rvert^2\rangle ds
     +
     \int_0^{T+1} \partial_s \psi_s f^n_{\phi}(s)ds
     \\  
     &\rightarrow  f_{\phi}(0)
     +
     \int_0^{T+1} \psi_s \langle b_s\cdot\nabla\phi, \lvert \rho_s\rvert^2\rangle ds
     +
     \int_0^{T+1} \partial_s \psi_s f_{\phi}(s)ds.
\end{align*}
Since $\rho^{\varepsilon_n}$ is converging to the renormalized solution $\rho$ we deduce as well
\begin{align} \label{eq:epsilon_n_tozero_together}
\varepsilon_n \sum_{k\in \N}\int_0^{T+1} \psi_s \langle \sigma_k\cdot\nabla\phi, \lvert \rho^{\varepsilon_n}_s\rvert^2\rangle dW^k_s
+
\varepsilon_n^2\int_0^{T+1} \psi_s \langle\lvert \rho^{\varepsilon_n}_s\rvert^2,\Delta\phi\rangle ds
-
\langle \langle d\mathcal{D}^{\varepsilon_n} , \psi \phi  \rangle \rangle   \rightarrow 0. 
\end{align}
Up to passing to further non-relabeled subsequences, thanks to \eqref{bounds_renormalized}, it is easy to show that each of the following terms individually goes to zero $\PP$-almost surely:
\begin{align} \label{eq:each_term_individually}
    \sup_{t\in [0,T+1]}\varepsilon_n &\left\lvert\sum_{k\in \N}\int_0^t \langle \sigma_k\cdot\nabla\phi, \lvert \rho^{\varepsilon_n}_s\rvert^2\rangle dW^k_s\right\rvert
    +
    \sup_{t\in [0,T+1]}\varepsilon_n\left\lvert\sum_{k\in \N}\int_0^t  \psi_s \langle \sigma_k\cdot\nabla\phi, \lvert \rho^{\varepsilon_n}_s\rvert^2\rangle dW^k_s\right\rvert \nonumber
    \\
    &\quad+
    \varepsilon_n^2\int_0^{T+1}  \lvert\langle\lvert \rho^{\varepsilon_n}_s\rvert^2,\Delta\phi\rangle \rvert^2 ds
    +
    \varepsilon_n^2\int_0^{T+1} \psi_s^2 \lvert\langle\lvert \rho^{\varepsilon_n}_s\rvert^2,\Delta\phi\rangle \rvert^2 ds\rightarrow 0,\quad\mathbb{P} \mbox{-almost surely}.
\end{align}
From this and \eqref{eq:epsilon_n_tozero_together} we also deduce that for every $t \in [0,T+1/2]$, it holds $\PP$-almost surely as $n \to \infty$
\begin{align*}
0\leq \langle \langle d\mathcal{D}^{\varepsilon_n}, \mathbf{1}_{[0,t]}\phi \rangle\rangle 
\leq 
\langle \langle d\mathcal{D}^{\varepsilon_n} , \psi \phi  \rangle \rangle   \rightarrow 0.
\end{align*}
Now, for each $t\in \tau_\omega \cap [0,T+\frac{1}{4}]$ we can control
\begin{align*}
    \lvert f^n_{\phi}(t)-f_{\phi}(t)\rvert & \leq \int_0^{T+1} \lvert\langle b_s\cdot\nabla \phi, \lvert \rho^{\varepsilon_n}_s \rvert^2-\lvert \rho_s \rvert^2\rangle\rvert ds+ \sup_{t\in [0,T+1]}\varepsilon_n \left\lvert\sum_{k\in \N}\int_0^t \langle \sigma_k\cdot\nabla\phi, \lvert \rho^{\varepsilon_n}_s\rvert^2\rangle dW^k_s\right\rvert\\ & +\varepsilon_n^2\int_0^{T+1}  \lvert\langle\lvert \rho^{\varepsilon_n}_s\rvert^2,\Delta\phi\rangle \rvert^2 ds
    +
    \langle \langle d\mathcal{D}^{\varepsilon_n}, \mathbf{1}_{[0,T+1/2]} \phi \rangle  \rangle 
    \rightarrow 0.
\end{align*}
Therefore, recalling the definition of $f^n_\phi$ and $f_\phi$, we get 
\begin{align*}
    \sup_{t\in \tau_\omega \cap [0,T+\frac{1}{4}]}
    \left| \int_{\R^d}\left(\lvert \rho^{\varepsilon_n}_t(x)\rvert^2-\lvert \rho_t(x)\rvert^2\right)\phi(x)dx \right| \rightarrow 0.
\end{align*}
Let $O_R$ denote the ball in $\R^d$ of radius $R<\infty$ and center zero.
By approximating $\mathbf{1}_{O_R}$ by a sequence of smooth $\phi$'s, we deduce
\begin{align} \label{eq:convergence_norms_onboundedsets}
\sup_{t\in \tau_\omega \cap [0,T+\frac{1}{4}]}
    \left| \int_{O_R} \lvert \rho^{\varepsilon_n}_t(x)\rvert^2 dx
    -
    \int_{O_R}\lvert \rho_t(x)\rvert^2
    dx \right| \rightarrow 0.
\end{align}

\emph{Step 3: Essentially uniform convergence in $L^2_{loc}$.}
Let $t\in \tau_\omega\cap [0,T+\frac{1}{4}]$. Then we have for every $R < \infty$
\begin{align*}
    \int_{O_R}\lvert\rho^{\varepsilon_n}_t(x)- \rho_t(x)\rvert^2 dx
    &\leq 
    \sup_{t\in \tau_\omega \cap [0,T+\frac{1}{4}]}
    \left| \int_{O_R} \lvert \rho^{\varepsilon_n}_t(x)\rvert^2 dx
    -
    \int_{O_R}\lvert \rho_t(x)\rvert^2
    dx \right| 
    \\
    &\quad+
    2\sup_{t\in \tau_\omega\cap [0,T+\frac{1}{4}]}\langle \one_{O_R}\rho_t,\rho^{\varepsilon_n}_t-\rho_t\rangle.
\end{align*}
Since $\rho_t\in C_{t,*}L^2_x$ (cf. \cite[Theorem II.3]{diperna1989ordinary}) and $\rho^{\varepsilon_n}\rightarrow \rho $ in $\mathcal{E}_*$,
we have 
\begin{align*}
    \limsup_{n\rightarrow+\infty}\sup_{t\in \tau_\omega\cap [0,T+\frac{1}{4}]}\langle \one_{O_R}\rho_t,\rho^{\varepsilon_n}_t-\rho_t\rangle=0.
\end{align*}
Therefore, putting this together with \eqref{eq:convergence_norms_onboundedsets}, we arrive to
\begin{align} \label{eq:L2_loc}
  \limsup_{n\rightarrow +\infty}\sup_{t\in \tau_\omega\cap [0,T+\frac{1}{4}]}\int_{O_R}\lvert\rho^{\varepsilon_n}_t(x)- \rho_t(x)\rvert^2 dx=0 .
\end{align}

\emph{Step 4: Uniform concentration on a large ball.}
Let us fix $\eta>0$ and let us introduce, for every $R'>R$, the functions $\Psi_{R,R'}\in C^{\infty}_c(\R^d)$ and $\Psi_R\in C_b^{\infty}(\R^d)$, being such that $ 0\leq \Psi_{R,R'}\leq \Psi_{R}\leq 1$, and
\begin{align*}
    \Psi_{R,R'}(x)=\begin{cases}
        0\quad \text{if }\lvert x\rvert <R/2,\\
        1\quad \text{if }R< \lvert x\rvert <R',\\
        0\quad \text{if }2R'<\lvert x\rvert,
    \end{cases}
    \qquad
        \Psi_{R}(x)=\begin{cases}
        \Psi_{R,R'}\quad \text{if }\lvert x\rvert <R,\\
        1 \qquad \,\,\,\text{ if }R\leq \lvert x\rvert,
    \end{cases}
\end{align*}
and such that for some constant $C$ independent of $x,R,R'$ we can control
\begin{align*}
\lvert \nabla\Psi_{R,R'}(x)\rvert+\lvert \nabla\Psi_{R}(x)\rvert \leq \frac{C}{R},
\qquad
\lvert \nabla^2\Psi_{R,R'}(x)\rvert+\lvert \nabla^2\Psi_{R}(x)\rvert \leq \frac{C}{R^2}.
\end{align*}
Using $\Psi_{R,R'}$ as test function in \autoref{lem:beta_rho}, and recalling that $\mathcal{D}^{\varepsilon_n} \geq 0$, we obtain for each $t\in \tau_\omega\cap [0,T+\frac{1}{4}]$
\begin{align} \label{estimates_out_of_ball}
    \int_{\R^d} \lvert \rho^{\varepsilon_n}_t(x)\rvert^2\Psi_{R,R'}(x)dx
    &\leq 
    \int_{\R^d\setminus O_{R/2}} \lvert \rho_0(x)\rvert^2dx+\frac{C}{R}\int_0^{T+1} \int_{\R^d}\lvert b_s(x)\rvert \lvert \rho^{\varepsilon_n}_s(x)\rvert^2 dx ds
    \\ \notag &+ 
    \frac{C\varepsilon_n^2}{R^2}\int_0^{T+1} \int_{\R^d}\lvert \rho^{\varepsilon_n}_s(x)\rvert^2 dx ds
    \\ \notag &+
    \varepsilon_n\sup_{t\in [0,T+1]} \left|\sum_{k\in\N}\int_0^t \langle\sigma_k\cdot(\nabla \Psi_{R,R'}-\nabla \Psi_{R}),\lvert \rho^{\varepsilon_n}_s \rvert^2\rangle dW^{k}_s \right|
    \\ \notag &+
    \varepsilon_n\sup_{t\in [0,T+1]} \left|\sum_{k\in\N}\int_0^t \langle\sigma_k\cdot\nabla \Psi_{R},\lvert \rho^{\varepsilon_n}_s \rvert^2\rangle dW^{k}_s \right|.
\end{align}
By \eqref{bounds_renormalized} we have
\begin{align*}
    \mathbb{E}&\left[\sup_{t\in [0,T+1]} \left| \sum_{k\in\N}\int_0^t \langle\sigma_k\cdot(\nabla \Psi_{R,R'}-\nabla \Psi_{R}),\lvert \rho^{\varepsilon_n}_s \rvert^2\rangle dW^{k}_s\right|^2\right]
    \\
    &\quad\lesssim 
    \int_0^{T+1}\mathbb{E}\left[\lVert (\nabla \Psi_{R,R'}-\nabla \Psi_{R})\lvert \rho^{\varepsilon_n}_t \rvert^2\rVert^2_{L^1_x}\right]dt
    \\ 
    &\quad\lesssim 
    \lVert \rho_0\rVert_{L^1_x\cap L^{\infty}_x}\lVert \nabla \Psi_{R,R'}-\nabla \Psi_{R}\rVert_{L^1_x}^2
     \rightarrow 0,
     \qquad\text{as }R'\rightarrow+\infty.
\end{align*}
Hence there exists a sequence of radii $R_m'\rightarrow \infty$ such that as $m \to \infty$
\begin{align*}
    \varepsilon_n\sup_{t\in [0,T+1]} \left|\sum_{k\in\N}\int_0^t \langle\sigma_k\cdot(\nabla \Psi_{R,R'_m}-\nabla \Psi_{R}),\lvert \rho^{\varepsilon_n}_s \rvert^2\rangle dW^{k}_s \right|\rightarrow 0,\quad\mathbb{P} \mbox{-almost surely}.
\end{align*}
Therefore, taking the limit $m \to \infty$ in \eqref{estimates_out_of_ball} above for such sequence of $R'_m$ and recalling \eqref{bounds_renormalized}, we obtain for every given $n \in \N$  
\begin{align*}
\int_{\R^d\setminus O_R} \lvert \rho^{\varepsilon_n}_t(x)\rvert^2dx&\leq \int_{\R^d\setminus O_{R/2}} \lvert \rho_0(x)\rvert^2dx+\frac{C}{R}\lVert b\rVert_{L^1_tW^{1,q}_x}\lVert \rho_0\rVert_{L^1_x\cap L^{\infty}_x}^2\notag\\ & + \frac{C \varepsilon_n^2}{R^2}\lVert \rho_0\rVert_{L^1_x\cap L^{\infty}_x}^2\notag
\\ 
&+
\varepsilon_n\sup_{t\in [0,T+1]}\left|\sum_{k\in\N}\int_0^t \langle\sigma_k\cdot\nabla \Psi_{R},\lvert \rho^{\varepsilon_n}_s\rvert^2\rangle dW^{k}_s\right|.
\end{align*}
Let us now choose $R=R(\eta)$ large enough such that \begin{align*}
  \int_{\R^d\setminus O_{R/2}} \lvert \rho_0(x)\rvert^2dx+\frac{C}{R}\lVert b\rVert_{L^1_tW^{1,q}_x}\lVert \rho_0\rVert_{L^1_x\cap L^{\infty}_x}^2 
  \leq \frac{\eta}{2}.  
\end{align*}
For such choice of $R$ let us observe that 
\begin{align*}
    \varepsilon^2_n\mathbb{E}\left[\sup_{t\in [0,T+1]}
    \left|\sum_{k\in\N}\int_0^t \langle\sigma_k\cdot\nabla \Psi_{R},\lvert \rho^{\varepsilon_n}_s \rvert^2\rangle dW^{k}_s \right|^2\right]\rightarrow 0\quad\text{as }n\rightarrow+\infty.
\end{align*}
Therefore, there exist a non-relabeled subsequence $\varepsilon_n=\varepsilon_n(\eta)$ such that 
\begin{align*}
    \varepsilon_n\sup_{t\in [0,T+1]}\left|\sum_{k\in\N}\int_0^t \langle\sigma_k\cdot\nabla \Psi_{R},\lvert \rho^{\varepsilon_n}_s \rvert^2\rangle dW^{k} _s \right|\rightarrow 0, \quad \mathbb{P} \mbox{-almost surely}.
\end{align*}
In conclusion, up to choosing $\varepsilon_n$ in such a subsequence, 
\begin{align} \label{eq:uniform_control_ball_eps}
    \limsup_{n\rightarrow +\infty}\sup_{t\in \tau_\omega\cap [0,T+\frac{1}{4}]}\int_{\R^d\setminus O_R} \lvert \rho^{\varepsilon_n}_t(x)\rvert^2dx\leq \eta.
\end{align}
The same argument implies that also
\begin{align} \label{eq:uniform_control_ball_ren}
    \sup_{t\in \tau_\omega\cap [0,T+\frac{1}{4}]}\int_{\R^d\setminus O_R} \lvert \rho_t(x)\rvert^2dx\leq \eta.
\end{align}

\emph{Step 5: Uniform convergence in $L^2_x.$}
For each $t\in \tau_\omega\cap [0,T+\frac{1}{4}]$ and $R \in (1,\infty) $ we obviously have
\begin{align*} 
   \norm{\rho^{\varepsilon_n}_t-\rho_t}_{L^2_x}^2& \leq \int_{O_R}\lvert\rho^{\varepsilon_n}_t(x)- \rho_t(x)\rvert^2 dx+2\int_{\R^d\setminus O_R}\lvert \rho^{\varepsilon_n}_t(x)\rvert^2 dx+  2\int_{\R^d\setminus O_R}\lvert \rho_t(x)\rvert^2 dx.
\end{align*}
By \eqref{eq:uniform_control_ball_eps} and \eqref{eq:uniform_control_ball_ren} from previous step, for every $\eta>0$ there exist $R < \infty$ and a non-relabeled subsequence $\varepsilon_n$ such that
\begin{align*}
    \limsup_{n\rightarrow +\infty}\sup_{t\in \tau_\omega\cap [0,T+\frac{1}{4}]}\norm{\rho^{\varepsilon_n}_t-\rho_t}_{L^2_x}^2& \leq \limsup_{n\rightarrow +\infty}\sup_{t\in \tau_\omega\cap [0,T+\frac{1}{4}]}\int_{O_R}\lvert\rho^{\varepsilon_n}_t(x)- \rho_t(x)\rvert^2 dx+4\eta  
    \leq 
    4\eta,
\end{align*}
where the last passage is justified by \eqref{eq:L2_loc} in Step 3 of the proof. 
Due to the arbitrariness of $\eta$, by a diagonal argument we can find a subsequence such that
\begin{align*}
\lim_{n\rightarrow+\infty}\norm{\rho^{\varepsilon_n}-\rho}_{L^{\infty}_tL^2_x}^2& =0, \quad\mathbb{P} \mbox{-almost surely}.
\end{align*}
Since $L^{\infty}_t L^2_x$ is a metric space, the latter implies the convergence in probability of the full sequence since ${\rho}$ is deterministic. The proof is complete.

\begin{rmk}
Since $\rho^{\varepsilon_n}, \rho\in \mathcal{E}$, the map $t \mapsto \norm{\rho^{\varepsilon_n}_t-\rho_t}_{L^2_x}^2$ is lower-semicontinuous.
In particular, the line above also implies 
\begin{align*}
\lim_{n\rightarrow+\infty} \sup_{t \in [0,T] }\norm{\rho^{\varepsilon_n}_t-\rho_t}_{L^2_x}^2& =0, \quad\mathbb{P} \mbox{-almost surely}.
\end{align*}
\end{rmk}

\section{Measurability and Stability Results}\label{Sec_Meas_Stab}
In this section we collect some results we will employ drastically in the proof of \autoref{thm:LDP} and \autoref{thm:strong_LDP}.
\subsection{Some measurability results}\label{subsec:meas_results}
Recall that, given a topological space $(X,\tau)$ and a subset $Y \subset X$, the subspace topology on $Y \subset X$ is defined as the coarsest topology on $Y$ such that the inclusion map $Y \subset X$ is continuous. Open sets in the subspace topology $Y \subset X$ are of the form $A \cap Y$ for some $A \in \tau$. Note that in our case, since $\mathcal{E}$ and $\mathscr{E}$ coincide as sets, $\mathscr{E}$ endowed with the subspace topology $\mathscr{E} \subset \mathcal{E}$ coincides with $(\mathcal{E}, \tau_{d_{\mathcal{E}}})$ as topological space. Similar reasonings apply also to $\mathscr{E}\times \mathscr{E}$.

\begin{lem} \label{lem:|rho-f|_closed}
For every $\delta \geq 0$ the set
\begin{align*}
   D_\delta := \{ (\rho,f) \in \mathscr{E} \times \mathscr{E} :  \| \rho_t - f_t\|_{\mathscr{E}} \leq \delta \}
\end{align*}
is closed with respect to the subspace topology on $\mathscr{E} \times \mathscr{E} \subset \mathcal{E} \times \mathcal{E}$.
\end{lem}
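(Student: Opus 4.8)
The plan is to reduce the statement to sequential closedness. By the remark preceding the lemma, $\mathscr{E}$ and $\mathcal{E}$ coincide as sets and the subspace topology on $\mathscr{E}\subset\mathcal{E}$ is exactly $\tau_{d_{\mathcal{E}}}$; consequently the subspace topology on $\mathscr{E}\times\mathscr{E}\subset\mathcal{E}\times\mathcal{E}$ is the product topology induced by $d_{\mathcal{E}}$, which is metrizable since $(\mathcal{E},d_{\mathcal{E}})$ is Polish. Hence it suffices to check that $D_\delta$ is sequentially closed in $\mathcal{E}\times\mathcal{E}$: given $\{(\rho^n,f^n)\}_{n\in\N}\subset D_\delta$ with $(\rho^n,f^n)\to(\rho,f)$ in $\mathcal{E}\times\mathcal{E}$, I must show $(\rho,f)\in D_\delta$. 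Note that $\rho,f\in\mathcal{E}=\mathscr{E}$ automatically, so only the distance condition $d_{\mathscr{E}}(\rho,f)\leq\delta$ has to be verified.

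Next I unpack the convergence. Since $\mathcal{E}=C_t\tilde{H}^{-}_x\cap C_t\mathcal{B}$, the convergence in $\mathcal{E}$ implies in particular $\rho^n\to\rho$ and $f^n\to f$ in $C_t\mathcal{B}$, i.e.\ $\sup_{t}d_{\mathcal{B}}(\rho^n_t,\rho_t)\to0$ and $\sup_{t}d_{\mathcal{B}}(f^n_t,f_t)\to0$. Therefore, for every fixed $t\in[0,T]$, one has $\rho^n_t\rightharpoonup\rho_t$ and $f^n_t\rightharpoonup f_t$ weakly in $L^2_x\cap L^p_x$ (testing against both $L^2_x$ and $L^{p'}_x$ functions, as in the proof of \autoref{Compactness_Criterion}), whence $\rho^n_t-f^n_t\rightharpoonup\rho_t-f_t$ weakly in $L^2_x\cap L^p_x$.

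Then I conclude by lower semicontinuity of the norms. Set $a_n:=\sup_{t}\|\rho^n_t-f^n_t\|_{L^2_x}$ and $b_n:=\sup_{t}\|\rho^n_t-f^n_t\|_{L^p_x}$; the hypothesis $(\rho^n,f^n)\in D_\delta$ reads $a_n+b_n=d_{\mathscr{E}}(\rho^n,f^n)\leq\delta$. For each fixed $t$, weak lower semicontinuity of the $L^2_x$ norm gives $\|\rho_t-f_t\|_{L^2_x}\leq\liminf_n\|\rho^n_t-f^n_t\|_{L^2_x}\leq\liminf_n a_n$, and likewise $\|\rho_t-f_t\|_{L^p_x}\leq\liminf_n b_n$. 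Taking the supremum over $t\in[0,T]$ and using superadditivity of $\liminf$,
\[
d_{\mathscr{E}}(\rho,f)=\sup_{t}\|\rho_t-f_t\|_{L^2_x}+\sup_{t}\|\rho_t-f_t\|_{L^p_x}\leq\liminf_n a_n+\liminf_n b_n\leq\liminf_n(a_n+b_n)\leq\delta,
\]
so $(\rho,f)\in D_\delta$, as required.

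This argument has no genuine obstacle; the only point that deserves attention is that $d_{\mathscr{E}}$ is a \emph{sum of two separate suprema} rather than a supremum of a sum, so one cannot apply lower semicontinuity directly "at the level of $d_{\mathscr{E}}$". Instead one keeps the $L^2_x$ and $L^p_x$ contributions separate throughout, passes to the limit in each, and only recombines them at the very end via the elementary inequality $\liminf_n a_n+\liminf_n b_n\leq\liminf_n(a_n+b_n)$. One should also keep in mind that the relevant notion of weak convergence on the ball $\mathcal{B}$ is the one encoded in $d_{\mathcal{B}}$, and that both of the associated norms $\|\cdot\|_{L^2_x}$ and $\|\cdot\|_{L^p_x}$ are lower semicontinuous for it.
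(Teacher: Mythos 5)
Your proof is correct. The key ingredient is the same as in the paper's argument, namely lower semicontinuity of the $L^2_x$ and $L^p_x$ norms with respect to the weak convergence encoded in $d_{\mathcal{B}}$, but the packaging differs: the paper fixes a countable dense set of times $\{t_n\}$ and exhibits $D_\delta$ as the countable intersection $\bigcap_{n,m}\{(\rho,f):\|\rho_{t_n}-f_{t_n}\|_{L^2_x}+\|\rho_{t_m}-f_{t_m}\|_{L^p_x}\leq\delta\}$ (using weak continuity of trajectories to replace the suprema over $[0,T]$ by suprema over the dense set), each factor being closed for elementary reasons, whereas you check sequential closedness directly, which is legitimate because the subspace topology is the metrizable topology of $\mathcal{E}\times\mathcal{E}$. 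Your route is more self-contained for this single lemma; the paper's route has the side benefit that the explicit countable-intersection/union structure is reused immediately afterwards in \autoref{lem:dist(rho,F)_meas} and \autoref{lem:same_law}, where the sets $F^{M,N}_\delta$ must be closed in $\mathcal{E}$ so that probabilities can be computed from laws on $\mathcal{E}$. Your handling of the two separate suprema via $\liminf_n a_n+\liminf_n b_n\leq\liminf_n(a_n+b_n)$ is the right way to recombine the two contributions and matches the paper's implicit use of the identity $\sup_s A_s+\sup_t B_t=\sup_{s,t}(A_s+B_t)$.
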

\begin{proof}
Let $\{t_n\}_{n \in \N}$ be a countable dense in $[0,T]$ containing the right endpoint (without loss of generality $t_1 = T$). We notice that
\begin{align*}
D_{\delta}&=\{ (\rho,f) \in \mathscr{E} \times \mathscr{E} :\sup_{t \in [0,T]}\| \rho_t - f_t\|_{L^2_x}+\sup_{t \in [0,T]}\| \rho_t - f_t\|_{L^p_x}  \leq \delta \}\\
&=
\bigcap_{n,m \in \N}
\{ (\rho,f) \in \mathscr{E} \times \mathscr{E} :\| \rho_{t_n} - f_{t_n}\|_{L^2_x}+\| \rho_{t_m} - f_{t_m}\|_{L^p_x} \leq \delta \}
,
\end{align*}
since elements in $\mathscr{E}$ are weakly continuous in $L^2_x\cap L^p_x$. Since both the $L^2_x$ and the $L^p_x$ norm are lower-semicontinuous with respect to the convergence in $\mathcal{B}$, each set in the intersection above is closed with respect to the subspace topology on $\mathscr{E} \times \mathscr{E} \subset \mathcal{E} \times \mathcal{E}$, implying $D_\delta$ closed as well since it is a countable intersection of closed sets.
\end{proof}

\begin{lem} \label{lem:dist(rho,F)_meas}
Let $F$ be a separable closed set in $\mathscr{E}$. Then for every $\delta \geq 0$ the set
\begin{align*}
F_\delta := \{ \rho \in \mathscr{E} : \mbox{d}_{\mathscr{E}}(\rho,F) \leq \delta \}
\end{align*}
is measurable with respect to the Borel $\sigma$-field on $\mathscr{E}$ generated by the subspace topology on $\mathscr{E} \subset \mathcal{E}$. Moreover it holds
\begin{align} \label{eq:F_delta_union}
F_\delta = \bigcap_{N\in \N}F^N_{\delta}
\end{align}
for a sequence of subsets $F^{N}_\delta$ satisfying $F^N_{\delta}=\bigcap_{n=1}^N F^n_{\delta}$ for each $N\in \N$ and $\ F^N_{\delta}=\bigcup_{M\in \N} F^{M,N}_{\delta}$ for some $F^{M,N}_{\delta}$ that are closed with respect to the subspace topology on $\mathscr{E} \subset \mathcal{E}$
and $F^{M,N}=\bigcup_{m=1}^M F^{m,N} $ for each $M,N\in \N.$
\end{lem}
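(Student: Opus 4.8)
The plan is to exploit the separability of $F$ to rewrite the distance $\rho\mapsto d_{\mathscr{E}}(\rho,F)$ as a \emph{countable} infimum, and then to express the sublevel set $F_\delta$ as a countable Boolean combination of sets that are closed for the coarse subspace topology on $\mathscr{E}\subset\mathcal{E}$, the closedness of these building blocks being provided by \autoref{lem:|rho-f|_closed}. So, first I would fix a countable subset $\{f^n\}_{n\in\N}\subset F$ which is dense in $F$ for the metric $d_{\mathscr{E}}$; this exists precisely because $F$ is separable, and this is the only point where that hypothesis enters. A one-line triangle-inequality argument then gives $d_{\mathscr{E}}(\rho,F)=\inf_{n\in\N}d_{\mathscr{E}}(\rho,f^n)$ for every $\rho\in\mathscr{E}$ (the bound $\le$ comes from density, while $\ge$ is immediate since $f^n\in F$), so that
\begin{align*}
F_\delta
&=\Big\{\rho\in\mathscr{E}:\inf_{n\in\N}d_{\mathscr{E}}(\rho,f^n)\le\delta\Big\}\\
&=\bigcap_{N\in\N}\bigcup_{n\in\N}\Big\{\rho\in\mathscr{E}:d_{\mathscr{E}}(\rho,f^n)<\delta+\tfrac1N\Big\}.
\end{align*}

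Next I would record that, for each fixed $f\in\mathscr{E}$ and each $c\in\R$, the set $\{\rho\in\mathscr{E}:d_{\mathscr{E}}(\rho,f)\le c\}$ is closed for the subspace topology $\mathscr{E}\subset\mathcal{E}$: if $c<0$ it is empty, and if $c\ge 0$ it is the preimage of the set $D_c$ of \autoref{lem:|rho-f|_closed} (which, as its proof shows, equals $\{(\rho,f'):d_{\mathscr{E}}(\rho,f')\le c\}$) under the map $\rho\mapsto(\rho,f)$, which is continuous for the subspace topologies on $\mathscr{E}$ and $\mathscr{E}\times\mathscr{E}$. Since each ball $\{\rho:d_{\mathscr{E}}(\rho,f^n)<\delta+1/N\}$ is the increasing union over $j\in\N$ of the closed sets $\{\rho:d_{\mathscr{E}}(\rho,f^n)\le\delta+1/N-1/j\}$, the announced structure follows by setting
\begin{align*}
F^{M,N}_{\delta}:=\bigcup_{m\le M}\;\bigcup_{j\le M}\Big\{\rho\in\mathscr{E}:d_{\mathscr{E}}(\rho,f^m)\le\delta+\tfrac1N-\tfrac1j\Big\}.
\end{align*}
Indeed each $F^{M,N}_\delta$ is a finite union of subspace-closed sets, hence subspace-closed, and $F^{m,N}_\delta\subseteq F^{M,N}_\delta$ for $m\le M$, whence $F^{M,N}_\delta=\bigcup_{m=1}^{M}F^{m,N}_\delta$; then $F^{N}_{\delta}:=\bigcup_{M\in\N}F^{M,N}_\delta=\bigcup_{n\in\N}\{\rho:d_{\mathscr{E}}(\rho,f^n)<\delta+1/N\}$; the sets $F^{N}_\delta$ are decreasing in $N$, so $F^{N}_{\delta}=\bigcap_{n=1}^{N}F^{n}_{\delta}$; and $F_\delta=\bigcap_{N\in\N}F^{N}_{\delta}$ by the previous display. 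In particular $F_\delta$ is a countable intersection of countable unions of subspace-closed sets, hence it lies in the Borel $\sigma$-field on $\mathscr{E}$ generated by the subspace topology $\mathscr{E}\subset\mathcal{E}$, which is the assertion.

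The only genuinely delicate ingredient is the closedness of the sublevel sets of $d_{\mathscr{E}}(\cdot,f)$ for the weak subspace topology — that is \autoref{lem:|rho-f|_closed}, which ultimately rests on the weak continuity in $L^2_x\cap L^p_x$ of the trajectories in $\mathscr{E}$ together with the weak lower-semicontinuity of the $L^2_x$ and $L^p_x$ norms. Everything else is a routine rearrangement of infima and countable unions. It is also worth emphasising that separability of $F$ is indispensable here: without a countable dense subset the map $\rho\mapsto d_{\mathscr{E}}(\rho,F)$ need not be measurable for the coarse $\sigma$-field, since the passage from the arbitrary infimum over $F$ to a countable one would break down.
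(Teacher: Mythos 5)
Your proposal is correct and follows essentially the same route as the paper: a countable dense subset of $F$ turns the distance into a countable infimum, and $F_\delta$ is then written as a countable intersection of countable unions of sublevel sets of $\rho\mapsto d_{\mathscr{E}}(\rho,f^m)$, whose subspace-closedness comes from \autoref{lem:|rho-f|_closed}. The only (harmless) cosmetic difference is that you use strict inequalities $d<\delta+1/N$ and hence need the extra union over $j$ to recover closed sets, whereas the paper works directly with the non-strict radii $\delta+1/N$, making each $F^{M,N}_\delta$ a finite union of closed balls without further decomposition.
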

\begin{proof}
If $F$ is empty then $F_\delta = \emptyset$ and the thesis holds, therefore we can assume $F$ non-empty hereafter. Let $\{f^m\}_{m \in \N} \subset F$ be a dense subset in $F$ with respect to the $\mathscr{E}$ topology. We have for every $\delta\geq 0$  
\begin{align} \label{eq:F_delta}
    F_\delta = \{ \rho \in \mathscr{E} : \inf_{m \in \N}  d_{\mathscr{E}}(\rho,f^m) \leq \delta \}.
\end{align}
Indeed, as $\{f^m\}_{m \in \N}$ is dense in $F$, given any sequence $f_k \in F$ such that $ d_{\mathscr{E}}(\rho, f_k) \leq \mbox{d}_{\mathscr{E}}(\rho,F) + 1/k$, $k \in \N$, there exists $m = m_k \in \N$ with $d_{\mathscr{E}}( f^m, f_k) \leq 1/k$; we deduce $d_{\mathscr{E}}( \rho, f^m) \leq d_{\mathscr{E}}(\rho, f_k)+ d_{\mathscr{E}}(f_k, f^m ) \leq \mbox{dist}_{\mathscr{E}}(\rho,F) + 2/k$, implying $\inf_{m \in \N} d_{\mathscr{E}}(\rho,f^m) \leq d_{\mathscr{E}}(\rho,F)$ since $k$ is arbitrary. On the other hand, the converse inequality holds true trivially since $f^m \in F$ for every $m$, and \eqref{eq:F_delta} follows.
For fixed $M,N \in \N$ let us denote
\begin{align*}
    F^{M,N}_\delta 
    := 
    \bigcup_{m =1}^M \{ \rho \in \mathscr{E} : d_{\mathscr{E}}(\rho, f^m ) \leq \delta+\frac{1}{N} \}.
\end{align*}
By definition the sets $F^{M,N}_\delta$ are nested: $F^{M,N}_\delta \subset F^{M+1,N}_\delta$.
Moreover, each $F^{M,N}_\delta$ is closed with respect to the subspace topology on $\mathscr{E} \subset \mathcal{E}$, by the same arguments in the proof of \autoref{lem:|rho-f|_closed} and $F_{\delta}=\bigcap_{N\in \N}\bigcup_{M\in \N}F^{M,N}_{\delta}$.
By letting $F^N_{\delta}:=\bigcup_{M\in \N}F^{M,N}_{\delta}$ and observing that $F^N_{\delta}=\bigcap_{n=1}^N F^{n}_{\delta} $, we obtain the desired \eqref{eq:F_delta_union}. 
\end{proof}

The following lemma states that the probability of events that a random variables belongs to $F_{\delta}$ only depends on its law on $\mathcal{E}$.
\begin{lem} \label{lem:same_law}
Let $(\Omega,\mathcal{F},\PP)$ and $(\tilde{\Omega},\tilde{\mathcal{F}},\tilde{\PP})$ be two probability spaces and let $\rho : \Omega \to \mathcal{E}$ and $\tilde \rho : \tilde{ \Omega} \to \mathcal{E}$ two random variables with the same law.  
Then for every closed separable $F \subset \mathscr{E}$ and $\delta \geq 0$ we have $\{d_{\mathscr{E}}(\rho,F) \leq \delta \} \in \mathcal{F}$, $\{d_{\mathscr{E}}(\tilde\rho,F) \leq \delta \} \in \tilde{\mathcal{F}}$, and 
\begin{align*}
\PP \{d_{\mathscr{E}}(\rho,F) \leq \delta \}
=
\tilde\PP \{d_{\mathscr{E}}(\tilde\rho,F) \leq \delta \}.
\end{align*}
\end{lem}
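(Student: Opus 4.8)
The plan is to deduce the statement directly from \autoref{lem:dist(rho,F)_meas}, the only genuine point being a careful identification of the relevant $\sigma$-field. First I would observe that $\mathscr{E}$ and $\mathcal{E}$ are literally the same underlying set, and that the subspace topology on $\mathscr{E}\subset\mathcal{E}$ is nothing but the topology of $\mathcal{E}$; consequently the Borel $\sigma$-field on $\mathscr{E}$ generated by that subspace topology coincides with $\mathscr{B}(\mathcal{E})$. In particular, \autoref{lem:dist(rho,F)_meas} asserts precisely that, for every closed separable $F\subset\mathscr{E}$ and every $\delta\geq 0$, the set $F_\delta := \{\rho\in\mathscr{E}: d_{\mathscr{E}}(\rho,F)\leq\delta\}$ is a Borel subset of the Polish space $\mathcal{E}$ (the case $F=\emptyset$ being trivial, as already noted there).

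Next, since $\rho:\Omega\to\mathcal{E}$ is Borel measurable and every point of $\mathcal{E}$ belongs to $\mathscr{E}$, so that the quantity $d_{\mathscr{E}}(\rho(\omega),F)$ makes sense for all $\omega$, the event in question is exactly the preimage
\[
\{d_{\mathscr{E}}(\rho,F)\leq\delta\} = \rho^{-1}(F_\delta) \in \mathcal{F},
\]
and the same applies to $\tilde\rho$, giving $\{d_{\mathscr{E}}(\tilde\rho,F)\leq\delta\}=\tilde\rho^{-1}(F_\delta)\in\tilde{\mathcal{F}}$. The equality of probabilities then follows by a one-line change-of-variables argument: having the same law means $\PP\circ\rho^{-1}=\tilde\PP\circ\tilde\rho^{-1}$ as Borel measures on $\mathcal{E}$, and evaluating both measures on $F_\delta\in\mathscr{B}(\mathcal{E})$ yields $\PP\{d_{\mathscr{E}}(\rho,F)\leq\delta\}=\tilde\PP\{d_{\mathscr{E}}(\tilde\rho,F)\leq\delta\}$.

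All the real work is therefore already contained in \autoref{lem:dist(rho,F)_meas}; the present lemma is essentially a corollary of it. The one step that deserves care is the $\sigma$-field identification above, and the reason the separability hypothesis on $F$ cannot be dropped is exactly that it is what allows \autoref{lem:dist(rho,F)_meas} to represent $F_\delta$, through a countable dense family $\{f^m\}_{m\in\N}\subset F$, as a countable Boolean combination of sets that are closed in the $\mathcal{E}$-topology; without it one has no handle on the Borel measurability of $F_\delta$ in $\mathcal{E}$, and the conclusion may genuinely fail.
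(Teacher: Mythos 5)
Your proposal is correct and follows essentially the same route as the paper: both rest on \autoref{lem:dist(rho,F)_meas} to show that $F_\delta$ is Borel in the Polish space $\mathcal{E}$ (using that $\mathscr{E}=\mathcal{E}$ as sets and that the subspace topology is the $\mathcal{E}$-topology), and then transfer the probability via equality of laws. The only difference is cosmetic: the paper passes through the closed sets $F^{M,N}_\delta$ and monotone convergence to equate the probabilities, whereas you evaluate the two (equal) pushforward measures directly on the Borel set $F_\delta$, which is a legitimate shortcut.
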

\begin{proof}
Noticing that $\mathcal{E}$ and $\mathscr{E}$ coincide as sets, by previous lemma we can rewrite 
\begin{align*}
\{d_{\mathscr{E}}(\rho,F) \leq \delta \}
=
\{\rho \in F_\delta\}
= \bigcap_{N\in \N}\bigcup_{M \in \N} \{\rho \in F_\delta^{M,N}\},
\end{align*}
where each $F^{M,N}_\delta$ is closed with respect to the subspace topology on $\mathscr{E} \subset \mathcal{E}$.
In particular, $F^{M,N}_\delta$ is closed in $\mathcal{E}$ for every $M,N$.
This implies $\{\rho \in F_\delta\} \in \mathcal{F}$ since $\rho : \Omega \to \mathcal{E}$ is a random variable, in particular $\rho$ is measurable with respect to the Borel $\sigma$-field on $\mathcal{E}$. 
The same holds for $\tilde{\rho}$.

By definition we have
\begin{align*}
    \mathbb{P}(\rho \in F_{\delta})=\lim_{N\rightarrow +\infty} \mathbb{P}(\rho \in F^N_{\delta}),\quad \tilde{\mathbb{P}}(\tilde{\rho} \in F_{\delta})=\lim_{N\rightarrow +\infty} \tilde{\mathbb{P}}(\tilde{\rho} \in F^N_{\delta}).
\end{align*}
Therefore it is enough to show the validity of
\begin{align*}
    \mathbb{P}(\rho \in F^N_{\delta})=\tilde{\mathbb{P}}(\tilde{\rho} \in F^N_{\delta}) \quad \forall N\in \N.
\end{align*}
By monotone convergence we have
\begin{align*}
\PP \{\rho\in F^N_{\delta}\}
&=
\lim_{M \to \infty} \PP \{\rho \in F^{M,N}_\delta \}  =\lim_{M \to \infty} \tilde{\PP} \{\tilde{\rho} \in F^{M,N}_\delta \} =\tilde{\PP}\{\tilde{\rho}\in F^N_{\delta}\},
\end{align*}
where the second identity is valid because $Law(\rho) = Law(\tilde{\rho})$, and therefore they coincide on all closed sets in $\mathcal{E}$. 
\end{proof}

\begin{cor}\label{corollary_integrals}
For every $\rho$, $\tilde{\rho}$, and $F$ as above, the functions 
\begin{align*}
d_{\mathscr{E}}(\rho,F) : \Omega \to \R_+,
\quad
\mbox{and}
\quad
d_{\mathscr{E}}(\tilde{\rho},F) : \Omega \to \R_+,
\end{align*}
are Borel measurable random variables, and they have the same law.
In particular, for every continuous and bounded $f:\R_+ \to \R$
\begin{align*}
\mathbb{E} \left[ f(d_{\mathscr{E}}(\rho,F) )\right]
=
\tilde{\mathbb{E}} \left[ f(d_{\mathscr{E}}(\tilde \rho,F)) \right].
\end{align*}
\end{cor}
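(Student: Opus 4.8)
The plan is to deduce the statement directly from \autoref{lem:same_law}, which already contains all the substantive content: for every $\delta \geq 0$ the event $\{d_{\mathscr{E}}(\rho,F) \leq \delta\}$ belongs to $\mathcal{F}$, the event $\{d_{\mathscr{E}}(\tilde\rho,F) \leq \delta\}$ belongs to $\tilde{\mathcal{F}}$, and the two events have equal probability.

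First I would check Borel measurability of $d_{\mathscr{E}}(\rho,F):\Omega\to\R_+$. Since the intervals $[0,\delta]$, $\delta\geq 0$, generate the Borel $\sigma$-field $\mathscr{B}(\R_+)$, and the preimage of $[0,\delta]$ under $d_{\mathscr{E}}(\rho,F)$ is precisely $\{d_{\mathscr{E}}(\rho,F)\leq\delta\}\in\mathcal{F}$ by \autoref{lem:same_law}, the map is $\mathcal{F}/\mathscr{B}(\R_+)$-measurable; the identical argument applies to $d_{\mathscr{E}}(\tilde\rho,F)$ on $\tilde\Omega$.

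Next I would identify the laws. A probability measure on $\R_+$ is determined by its cumulative distribution function; for $\delta<0$ both $\PP\{d_{\mathscr{E}}(\rho,F)\leq\delta\}$ and $\tilde\PP\{d_{\mathscr{E}}(\tilde\rho,F)\leq\delta\}$ vanish (a distance is nonnegative), while for $\delta\geq 0$ they coincide by \autoref{lem:same_law}. Hence $d_{\mathscr{E}}(\rho,F)$ and $d_{\mathscr{E}}(\tilde\rho,F)$ have the same law $\mu$ on $\R_+$. The last assertion then follows from the transfer principle for image measures: for any bounded continuous (indeed, bounded Borel measurable) $f:\R_+\to\R$ one has $\mathbb{E}[f(d_{\mathscr{E}}(\rho,F))]=\int_{\R_+}f\,d\mu=\tilde{\mathbb{E}}[f(d_{\mathscr{E}}(\tilde\rho,F))]$.

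I do not expect any real obstacle here: the difficulty specific to the non-separable space $\mathscr{E}$ — namely, why $\{d_{\mathscr{E}}(\rho,F)\leq\delta\}$ is $\mathcal{F}$-measurable in the first place — has already been resolved in \autoref{lem:dist(rho,F)_meas} and \autoref{lem:same_law} by writing $F_\delta$ as a countable combination of sets that are closed in the weaker Polish topology of $\mathcal{E}$, on which $\rho$ and $\tilde\rho$ are genuine Borel random variables with equal law. The present corollary is just the observation that simultaneous control of the distribution function at every level $\delta$ upgrades this into measurability and identity in law of the distance functionals, from which the equality of expectations of bounded continuous functions is automatic.
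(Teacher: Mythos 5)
Your proposal is correct and follows the paper's own proof essentially verbatim: both deduce Borel measurability from the fact that the sets $\{d_{\mathscr{E}}(\rho,F)\leq\delta\}$ are measurable by \autoref{lem:same_law} and that the intervals $[0,\delta]$ generate $\mathscr{B}(\R_+)$, then identify the laws through the coincidence of the cumulative distribution functions, from which the equality of expectations is immediate. No gaps.
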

\begin{proof}
By previous \autoref{lem:same_law}, the maps defined above are Borel measurable as the closed intervals of the form $\{[0,\delta]\}_{\delta \geq 0}$ generate the Borel $\sigma$-field on $\R_+$. In particular, they are real-valued random variables. Again by \autoref{lem:same_law}, they have the same law as their cumulative distribution functions coincide. 
\end{proof}

\subsection{Stability Results}\label{subsec_stab_res}
This subsection is devoted to the proof of \autoref{stability_theorem} below, giving stability (in law) of solutions to the stochastic transport equation \eqref{eq:rho_general} with respect to the drift $g$.

For $N \in \R_+$ let us define 
\begin{align*}
    S^N:=\{g\in L^2_t \mathcal{H}_0 \,:\, \|g\|_{L^2_t\mathcal{H}_0}^2 \leq N\},
\end{align*}
which is a Polish space when endowed with the weak $L^2_t \mathcal{H}_0$ topology. 
Next, let $(\Omega^{\varepsilon},\mathcal{F}^{\varepsilon},\{\mathcal{F}^{\varepsilon}_t\}_{t \geq 0},\mathbb{P}^{\varepsilon})$ be a family, indexed by $\varepsilon\geq 0$, of filtered probability space satisfying the usual assumptions. We denote
\begin{align*}
 \mathcal{P}_{prog}^N :=
 \{g: \Omega^\varepsilon \to S^N, \,\, g \mbox{ is } \{\mathcal{F}^\varepsilon_t\}_{t \geq 0} \mbox{ is progressively measurable} \},   
\end{align*}
without explicitly specifying the dependence of $\varepsilon$. 

Let $g^{\varepsilon}, g\in \mathcal{P}_{prog}^N $ for some $N<+\infty$.
In the following we denote $\rho^{\varepsilon,\rho_0^\varepsilon,g^\varepsilon}$ the unique probabilistically strong, analytically weak solution of the stochastic transport equation \eqref{eq:rho_general} with initial condition $\rho_0^\varepsilon$ and advecting velocity field $b+g^\varepsilon$, given by \autoref{prop_well_posed}.
Similarly, we denote $\rho^{\rho_0,g}$ the unique renormalized solution of the transport equation \eqref{eq:transport} with initial condition $\rho_0$ and advecting velocity field $b+g$, given by \cite[Theorem II.3]{diperna1989ordinary}. Notice that $\rho^{\rho_0,g} : \Omega^0 \to C_tL^2_x$ is a  random variable as $g$ is a random variable taking values in $S^N$ and the map $g \mapsto \rho^{\rho_0,g}$ is continuous by \cite[Theorem II.7]{diperna1989ordinary}.
We have the following:

\begin{prop}\label{stability_theorem}
Under the same assumptions of the first part of \autoref{thm:convergence}, suppose that $g^{\varepsilon}, g\in \mathcal{P}_{prog}^N $ for some $N<+\infty$ and $g^{\varepsilon}$ converges in law to $g$ as random variables in $S^N$. 
Then $\rho^{\varepsilon,\rho_0^\varepsilon,g^\varepsilon}$ converges in law to $\rho^{\rho_0,g}$, as random variables in $\mathcal{E}$. 

Moreover, if $\dvg \, b = 0$ and $F$ is a closed separable set in $\mathscr{E}$, it holds
\begin{align*}
\lim_{\varepsilon\rightarrow 0}
    \mathbb{E}^{\varepsilon}[ d_{\mathscr{E}}(\rho^{{\varepsilon},\rho_0^\varepsilon,g^{\varepsilon}},F) \wedge 1]=
    {\mathbb{E}}^0[ d_{\mathscr{E}}(\rho^{\rho_0,g},F) \wedge 1].    
\end{align*}
\end{prop}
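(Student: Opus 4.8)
The plan is to prove the two assertions separately, in both cases transferring the problem to an auxiliary probability space by the Skorokhod-type argument already used in the proof of \autoref{lem_Backward_convergence} (see \autoref{weak_convergence_eps}). For the convergence in law in $\mathcal{E}$, I would first reduce to a fixed initial datum $\rho_0^1\in L^1_x\cap L^\infty_x$ by the splitting $\rho_0=\rho_0^1+\rho_0^2$ from \cite[Theorem II.7]{diperna1989ordinary}, exactly as in the proof of \autoref{thm:convergence}: by linearity of \eqref{eq:rho_general} and \eqref{eq:bound_rho_Linfty_Lp} (which only sees $\dvg(b+g^\varepsilon)=\dvg b$), the remainder component of $\rho^{\varepsilon,\rho_0^\varepsilon,g^\varepsilon}$ has arbitrarily small $L^2_x\cap L^p_x$ initial norm and is therefore uniformly small in $L^\infty_t(L^2_x\cap L^p_x)$, hence (by $d_{\mathcal{E}}(u,v)\le 2\sup_t\|u_t-v_t\|_{L^2_x\cap L^p_x}$) uniformly small in $\mathcal{E}$, so testing against bounded Lipschitz functions reduces the claim to this fixed datum. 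There, the bounds of \autoref{prop_well_posed} give tightness of $\rho^{\varepsilon,\rho_0^1,g^\varepsilon}$ in $\mathcal{E}$ through \autoref{Compactness_Criterion}, the laws of $g^\varepsilon$ are automatically tight since $S^N$ is weakly compact, and the driving noise has $\varepsilon$-independent law; so along a subsequence Skorokhod's theorem gives an auxiliary space with $(\tilde\rho^{\varepsilon_n},\tilde g^{\varepsilon_n},\tilde W^{\varepsilon_n})\to(\tilde\rho,\tilde g,\tilde W)$ almost surely in $\mathcal{E}\times S^N\times C_t\R^{\N}$, where each $\tilde\rho^{\varepsilon_n}$ solves \eqref{eq:rho_general} with drift $b+\tilde g^{\varepsilon_n}$ and noise $\tilde W^{\varepsilon_n}$, and $\tilde\rho\in L^\infty_tL^\infty_x$. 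One then identifies $\tilde\rho$ as in \autoref{weak_convergence_eps}, Step 2: the It\=o and Laplacian terms drop out thanks to the prefactors $\varepsilon_n$, $\varepsilon_n^2$ and the uniform $L^\infty_tL^2_x$ bound.

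The one genuinely new point is passing to the limit in the drift term $\int_0^T\langle\tilde\rho_s^{\varepsilon_n},\tilde g_s^{\varepsilon_n}\cdot\nabla\phi_s\rangle\,ds$, a product of a weakly convergent factor and one converging only in the weak $L^2_t\mathcal{H}_0$ sense. I would split it as $\int_0^T\langle\tilde\rho_s^{\varepsilon_n}-\tilde\rho_s,\tilde g_s^{\varepsilon_n}\cdot\nabla\phi_s\rangle\,ds+\int_0^T\langle\tilde\rho_s,\tilde g_s^{\varepsilon_n}\cdot\nabla\phi_s\rangle\,ds$. Since $\mathcal{H}_0=H^{d/2+\alpha}_x$ is a multiplicative algebra, $\tilde g^{\varepsilon_n}\cdot\nabla\phi$ is bounded in $L^2_t\mathcal{H}_0$ uniformly in $n$ and converges weakly there to $\tilde g\cdot\nabla\phi$, so the second summand tends to $\int_0^T\langle\tilde\rho_s,\tilde g_s\cdot\nabla\phi_s\rangle\,ds$ because $\tilde\rho\in L^2_tL^2_x\subset L^2_tH^{-(d/2+\alpha)}_x$; the first summand is $\lesssim\|\tilde\rho^{\varepsilon_n}-\tilde\rho\|_{L^\infty_tH^{-\theta}_{x,\mathrm{loc}}}\,\|\tilde g^{\varepsilon_n}\|_{L^2_t\mathcal{H}_0}$ for a small $\theta>0$, and the first factor vanishes by interpolating the uniform $L^\infty_tL^2_x$ bound against the convergence $\tilde\rho^{\varepsilon_n}\to\tilde\rho$ in $C_t\tilde H^-_x$. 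Thus $\tilde\rho$ is a weak solution of \eqref{eq:transport} with drift $b+\tilde g$; being in $L^\infty_tL^\infty_x$, DiPerna--Lions uniqueness forces $\tilde\rho=\rho^{\rho_0^1,\tilde g}$, and since $\mathrm{Law}(\tilde g)=\mathrm{Law}(g)$ while $g\mapsto\rho^{\rho_0^1,g}$ is continuous by \cite[Theorem II.7]{diperna1989ordinary}, one gets $\mathrm{Law}(\tilde\rho)=\mathrm{Law}(\rho^{\rho_0^1,g})$; uniqueness of the limit law then promotes the subsequential convergence to convergence of the whole family.

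For the second identity I would argue by contradiction and subsequences. If it fails, there is $\varepsilon_n\downarrow0$ along which $\mathbb{E}^{\varepsilon_n}[d_\mathscr{E}(\rho^{\varepsilon_n,\rho_0^{\varepsilon_n},g^{\varepsilon_n}},F)\wedge1]$ stays a fixed distance from $\mathbb{E}^0[d_\mathscr{E}(\rho^{\rho_0,g},F)\wedge1]$. Running the Skorokhod construction along this sequence gives $\tilde\rho^{\varepsilon_n}\to\tilde\rho=\rho^{\rho_0,\tilde g}$ almost surely in $\mathcal{E}$. The crucial step is to upgrade this to convergence in $\mathscr{E}$: since $\dvg(b+\tilde g)=\dvg b=0$, for almost every $\tilde\omega$ the limit is a renormalized solution with constant $L^2_x$ norm, so the entire chain of estimates of \autoref{strong_convergence_eps} --- strong $L^2_{t,*}L^2_x$ convergence from convergence of the norms, the local energy balance of \autoref{lem:beta_rho} applied with drift $b+\tilde g^{\varepsilon_n}$, the concentration estimates on large balls, and the resulting uniform-in-time $L^2_x$ convergence, the extra divergence-free drift terms being absorbed via $\|\tilde g^{\varepsilon_n}\|_{L^2_tL^\infty_x}\lesssim\|\tilde g^{\varepsilon_n}\|_{L^2_t\mathcal{H}_0}\le\sqrt N$ --- applies $\tilde\omega$-wise, after splitting $\tilde\rho^{\varepsilon_n}$ into a main part with fixed $L^1_x\cap L^\infty_x$ datum and a uniformly $L^\infty_t(L^2_x\cap L^p_x)$-small remainder and diagonalizing over the splitting parameter, yielding $\tilde\rho^{\varepsilon_n}\to\tilde\rho$ in $\mathscr{E}$ almost surely along a further subsequence. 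Since $d_\mathscr{E}(\cdot,F)\wedge1$ is bounded and continuous on $(\mathscr{E},d_\mathscr{E})$, dominated convergence gives $\tilde{\mathbb{E}}[d_\mathscr{E}(\tilde\rho^{\varepsilon_n},F)\wedge1]\to\tilde{\mathbb{E}}[d_\mathscr{E}(\tilde\rho,F)\wedge1]$; by \autoref{corollary_integrals} the left-hand side equals $\mathbb{E}^{\varepsilon_n}[d_\mathscr{E}(\rho^{\varepsilon_n,\rho_0^{\varepsilon_n},g^{\varepsilon_n}},F)\wedge1]$ and, using $\mathrm{Law}(\tilde\rho)=\mathrm{Law}(\rho^{\rho_0,g})$ on $\mathcal{E}$ from the first part, the right-hand side equals $\mathbb{E}^0[d_\mathscr{E}(\rho^{\rho_0,g},F)\wedge1]$, contradicting the choice of $\varepsilon_n$.

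I expect the main obstacle to be precisely this upgrade from $\mathcal{E}$- to $\mathscr{E}$-convergence in the second assertion: it forces one to re-run the delicate argument of \autoref{strong_convergence_eps} in the presence of a random, time-dependent divergence-free drift $\tilde g^{\varepsilon_n}$ and with general $L^2_x\cap L^p_x$ data (hence the extra layer of splitting), while carefully coordinating the several nested subsequence extractions so that the dissipation measures, the martingale remainders and the spatial-tail terms all vanish along one common subsequence. A secondary, bookkeeping-type difficulty is to route every expectation written above through genuine random variables, which is exactly what the measurability results of Subsection~\ref{subsec:meas_results}, culminating in \autoref{corollary_integrals}, are designed to ensure.
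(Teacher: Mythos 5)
Your first part (convergence in law in $\mathcal{E}$) follows the paper's route essentially verbatim: splitting of the initial datum via \cite[Theorem II.7]{diperna1989ordinary}, tightness and Jakubowski--Skorokhod, and the same two-term decomposition of the drift integral $\int_0^T\langle\tilde\rho^{\varepsilon_n}_s,\tilde g^{\varepsilon_n}_s\cdot\nabla\phi\rangle ds$, handled by weak convergence of $\tilde g^{\varepsilon_n}$ against the fixed $\tilde\rho$ and by strong $C_t\tilde H^-_x$ convergence against the uniformly bounded $\tilde g^{\varepsilon_n}\cdot\nabla\phi$. The second part, however, has a genuine gap at its central step. You assert that "the entire chain of estimates of \autoref{strong_convergence_eps} applies $\tilde\omega$-wise", with the new drift "absorbed via $\|\tilde g^{\varepsilon_n}\|_{L^2_tL^\infty_x}\lesssim\sqrt N$". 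A uniform bound suffices for the concentration estimate on large balls (Step 4 there), where the drift term carries a prefactor $1/R$, but it does not suffice for the essentially uniform convergence of the localized energies $f^n_\phi(t)\to f_\phi(t)$. In \autoref{strong_convergence_eps} that uniformity in $t$ is obtained by dominating $|\int_0^t\cdots\,ds|$ by $\int_0^{T+1}|\cdots|\,ds$ and showing the latter vanishes; with the random drift this would require $\int_0^{T+1}|\langle(\tilde g^{\varepsilon_n,*}_s-\tilde g^{*}_s)\cdot\nabla\phi,|\tilde\rho^1_s|^2\rangle|\,ds\to 0$, which fails because $\tilde g^{\varepsilon_n,*}\rightharpoonup\tilde g^{*}$ only weakly in $L^2_t\mathcal{H}_0$: the uniform bound gives an $O(1)$ error, not $o(1)$. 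Some new device is needed — the paper proves convergence of $f^n_\phi$ at a \emph{single} time $t'>T$ (where the weak convergence applies against the fixed test function $\mathbf{1}_{[0,t']}$) and then propagates the information to all $t\le t'$ using that $\|\tilde\rho^{\varepsilon_n,1}_t\|_{L^2_x}$ is non-increasing (non-negativity of the dissipation measure) while $\|\tilde\rho^1_t\|_{L^2_x}$ is constant. Your proposal contains neither this monotonicity trick nor any substitute (e.g.\ an equicontinuity/Ascoli argument for the signed time integrals), so the uniform-in-time $L^2_x$ convergence is not established.

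A second, related gap is the final "diagonalization over the splitting parameter" to get almost sure convergence $\tilde\rho^{\varepsilon_n}\to\tilde\rho$ in $\mathscr{E}$ followed by dominated convergence. The Skorokhod space is constructed from the joint law of the tuple that includes $\rho^{\varepsilon_n,g^{\varepsilon_n,*},1}$ and $\rho^{\varepsilon_n,g^{\varepsilon_n,*},2}$, whose initial conditions depend on the splitting parameter $\delta$; hence the auxiliary space $(\Omega^\delta,\mathcal{F}^\delta,\PP^\delta)$ changes with $\delta$, and the almost sure statements for different $\delta$ live on different spaces and cannot be diagonalized into a single a.s.\ convergence. What the construction actually yields is only $\limsup_n d_{\mathscr{E}}(\tilde\rho^{\varepsilon_n},\rho^{\rho_0,\tilde g})\le\delta$ a.s.\ on $\Omega^\delta$, cf.\ \eqref{pointwise_inequality_delta}. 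The paper therefore keeps $\delta$ fixed, converts the expectation into a layer-cake integral, uses the set inclusions relating $\{d_{\mathscr{E}}(\tilde\rho^{\varepsilon_n},F)\le\lambda\}$ and $\{d_{\mathscr{E}}(\rho^{\rho_0,\tilde g},F)\le(1\pm\sqrt\delta)\lambda\}$ (whose measurability itself requires the argument that the relevant set is closed in the Polish product $\mathcal{E}\times\mathcal{E}$), and only diagonalizes over $\delta$ at the level of the resulting numerical $\liminf$/$\limsup$ bounds, which do not reference any auxiliary space. Your DCT shortcut presupposes exact a.s.\ $\mathscr{E}$-convergence of the full solution on one space, which is not what the argument delivers; this step needs to be replaced by the quantitative comparison of distribution functions.
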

\begin{proof}
As previously done for \autoref{thm:convergence} and \autoref{lem_Backward_convergence}, we split the argument in several steps. Moreover, we will consider extensions of $g^{\varepsilon}, g, b$ identically null on $(T,T+1]$. We call $g^{\varepsilon,*},g^*,b^*$ these extensions. Obviously also $g^{\varepsilon,*}$ converges in law to $g^{*}$. Similarly we will denote by $\mathcal{E}_*:= C_{t,*}\mathcal{B} \cap C_{t,*}\tilde{H}^-$, $ L^2_{t,*}\mathcal{H}_0 := L^2_{t,*}\mathcal{H}_0  
$ endowed with the weak topology, and analogously for other Bochner spaces.

\emph{Step 1: Convergence in law.}
As in the proof of \autoref{thm:convergence}, in order to prove the first part of the statement it is enough to show that for each sequence $\varepsilon_n\rightarrow 0$ there exists a (non-relabeled) subsequence such that $\rho^{\varepsilon_n,\rho_0^{\varepsilon_n},g^{\varepsilon_n}}$ converges in law to $\rho^{\rho_0,g}$ in the topology of $\mathcal{E}$. This implies the validity of the claim due to the uniqueness of the limit.  

Let us fix a small parameter $0<\delta \ll 1$ (not the same as in \autoref{prop_well_posed}). Due to the stability of renormalized solutions given by \cite[Theorem II.7, Theorem III.1]{diperna1989ordinary}  we can find $0<\theta \ll 1$ small enough and $\rho_0^1\in L^1_x\cap L^{\infty}_x,\ \rho_0^2\in L^2_x\cap L^p_x$ such that 
\begin{align*}
    \rho_0=\rho_0^1+\rho_0^2,\quad \norm{\rho_0^1}_{L^2_x\cap L^p_x}\leq 1,\quad \norm{\rho_0^2}_{L^2_x\cap L^p_x}\leq \theta,
\end{align*}
and moreover, denoting by ${\rho}^{1,g}$ the unique renormalized solution of the transport equation on $[0,T+1]$ with initial condition ${\rho}_0^1$ and advection $b^*+g^*$:
\begin{align} \label{eq:distance_renormali}
d_{\mathscr{E}_*}(\rho^g,\rho^{1,g})
=
 d_{\mathscr{E}}({\rho}^g,{\rho}^{1,g})\leq \exp(\|\dvg \, b \|_{L^1_tL^\infty_x})\theta \leq \frac{\delta}{8}.
\end{align}
Secondly, let us choose a sequence $\varepsilon_n$ such that $\varepsilon_n<\overline{\varepsilon}$ for some $\overline{\varepsilon}$ sufficiently small such that $\lVert \rho_0-\rho_0^{\varepsilon_n}\rVert_{L^2_x \cap L^p_x} \leq \theta$ and let us introduce $\rho_0^{\varepsilon_n,1} := \rho_0^1\in L^1_x\cap L^{\infty}_x$ and $\rho_0^{\varepsilon_n,2}:=\rho_0^{\varepsilon_n}-\rho_0+\rho_0^2\in L^2_x\cap L^p_x$. 
Let us denote by $\rho^{\varepsilon_n,g^{\varepsilon_n,*},1}$ (resp. $\rho^{\varepsilon_n,g^{\varepsilon_n,*},2}$) the unique weak solution of the stochastic transport equation on $[0,T+1]$ with initial condition $\rho_{0}^{\varepsilon_n,1}$ (resp. $\rho_0^{\varepsilon_n,2}$) and drift $b^*+g^{\varepsilon_n,*}$,  given by \autoref{prop_well_posed}. 
Bounds analogous to 
\eqref{estimate_stochastc_splitting}, \eqref{estimates_original_spaces}, \eqref{bounds_original_0}, and \eqref{bounds_original_1} hold true $\mathbb{P}^{\varepsilon_n}$-almost surely for $\rho^{\varepsilon_n,g^{\varepsilon_n,*},1}$ and $\rho^{\varepsilon_n,g^{\varepsilon_n,*},2}$.
Furthermore, by linearity and uniqueness of \eqref{eq:rho_general}, we have almost surely
\begin{align} \label{eq:decomposition_ic_aux}
\rho^{\varepsilon_n,\rho_0^{\varepsilon_n},g^{\varepsilon_n}}=\rho^{\varepsilon_n,g^{\varepsilon_n,*},1}|_{[0,T]}+\rho^{\varepsilon_n,g^{\varepsilon_n,*},2}|_{[0,T]}.
\end{align}

Since $g^{\varepsilon_n} \to g$ in law by assumption, arguing as in Step 2 of \autoref{weak_convergence_eps} the family of laws of the processes
\begin{align*}
X^n := (\rho^{\varepsilon_n,\rho_0^{\varepsilon_n},g^{\varepsilon_n}},\rho^{\varepsilon_n,g^{\varepsilon_n,*},1},\rho^{\varepsilon_n,g^{\varepsilon_n,*},2}, g^{\varepsilon_n},g^{\varepsilon_n,*},\{W^{\varepsilon_n,k}\}_{k\in \N})
\end{align*}
is tight in $\mathbb{X} := \mathcal{E}\times \mathcal{E}_* \times \mathcal{E}_* \times L^2_t\mathcal{H}_0\times L^2_{t,*}\mathcal{H}_0\times C_t\R^{\N}$.
By Jakubowski-Skorokhod Representation Theorem, up to choosing a (non-relabeled) subsequence $\varepsilon_n=\varepsilon_n(\delta)$ possibly depending on the parameter $\delta>0$, we can find an auxiliary filtered probability space $(\Omega^{\delta},\mathcal{F}^{\delta},\{\mathcal{F}_t^{\delta}\}_{t \geq 0},\mathbb{P}^{\delta})$ and random variables $\tilde{X}^n$ and $\tilde{X}$ such that $X^n$ and $\tilde{X}^n$ have the same law on $\mathbb{X}$ and $\tilde{X}^n \to \tilde{X}$ $\PP^\delta$-almost surely in $\mathbb{X}$.
We point out that, a priori, the space $(\Omega^{\delta},\mathcal{F}^{\delta},\mathcal{F}_t^{\delta},\mathbb{P}^{\delta})$ depends on the random variables $\rho^{\varepsilon_n,g^{\varepsilon_n,*},1}$ and $ \rho^{\varepsilon_n,g^{\varepsilon_n,*},2}$, defined in the  original probability spaces, and in particular by their $\delta$-dependent initial conditions $\rho_0^{\varepsilon_n,1}$ and $\rho_0^{\varepsilon_n,2}$.

Let us rewrite the processes $\tilde{X}^n$, $\tilde{X}$ in coordinates with respect to $\mathbb{X}$ as follows:
\begin{align*}
\tilde{X}^n 
&=: 
(\tilde{\rho}^{\varepsilon_n},\tilde{\rho}^{\varepsilon_n,1},\tilde{\rho}^{\varepsilon_n,2}, \tilde{g}^{\varepsilon_n},\tilde{g}^{\varepsilon_n,*},\{\tilde{W}^{\varepsilon_n,k}\}_{k\in \N}),
\\
\tilde{X} 
&=:
(\tilde{\rho},\tilde{\rho}^{1},\tilde{\rho}^{2}, \tilde{g},\tilde{g}^*,\{\tilde{W}^{k}\}_{k\in \N}),
\end{align*}
with $\PP^\delta$-almost sure convergence in $\mathbb{X}$ meaning convergence of each component in the respective space. In addition, by the definition of $g^{\varepsilon_n,*}$ and \eqref{eq:decomposition_ic_aux} we have $\PP^\delta$-almost surely 
\begin{align*}
\tilde{g}^{\varepsilon_n,*}|_{[0,T]}=\tilde{g}^{\varepsilon_n},
\qquad
\tilde{\rho}^{\varepsilon_n}
=
\tilde{\rho}^{\varepsilon_n,1}|_{[0,T]}
+
\tilde{\rho}^{\varepsilon_n,2}|_{[0,T]}. 
\end{align*}

By the same arguments presented in Step 1 in \autoref{weak_convergence_eps}, $\tilde{\rho}^{\varepsilon_n}$ is the unique probabilistically strong, analytically weak solution of the stochastic transport equation on the time interval $[0,T]$, with initial condition $\rho_0^{\varepsilon_n}$ and advecting velocity field $b+\tilde{g}^{\varepsilon_n}$ and noise $\tilde{W}^{\varepsilon_n}:=\sqrt{\mathcal{Q}}\tilde{\mathcal{W}}^{\varepsilon_n}$, where $\tilde{\mathcal{W}}^{\varepsilon_n} = \{ \tilde{W}^{\varepsilon_n,k} \}_{k \in \N}$ is a cylindrical Wiener process on $\mathbb{H}$.
Similarly, $\tilde{\rho}^{\varepsilon_n,1}$ (resp. $\tilde{\rho}^{\varepsilon_n,2}$) is the unique solution of the stochastic transport equation on the time interval $[0,T+1]$, with initial condition $\rho_0^{\varepsilon_n,1}$ (resp. $ \rho_0^{\varepsilon_n,2}$) and advecting velocity field $ b^*+\tilde{g}^{\varepsilon_n,*}$ and noise $\tilde{W}^{\varepsilon_n}_t$.

Moreover, following Step 2 of \autoref{weak_convergence_eps} we can show that
$\tilde{\rho}$ is a weak solution of the transport equation on the time interval $[0,T]$, with initial condition $\rho_0$ and advecting velocity field $b+\tilde{g}$.
The only difference with respect to the aforementioned argument consists in the identification of the limit 
\begin{align*}
 \lim_{n \to \infty}   \int_0^T \langle \tilde{\rho}^{\varepsilon_n}_s,\tilde{g}^{\varepsilon_n}_s\cdot\nabla\phi\rangle ds
 =
 \int_0^T \langle \tilde{\rho}_s,\tilde{g}_s\cdot\nabla\phi\rangle ds,
\end{align*}
which can be checked as follows. Rewrite
\begin{align*}
    \left| \int_0^T \langle \tilde{\rho}^{\varepsilon_n}_s,\tilde{g}^{\varepsilon_n}_s\cdot\nabla\phi\rangle ds
    -
    \int_0^T \langle \tilde{\rho}_s,\tilde{g}_s\cdot\nabla\phi\rangle ds \right| 
    &\leq 
     \left|\int_0^T \langle \tilde{\rho}_s,(\tilde{g}_s-\tilde{g}^{\varepsilon_n}_s)\cdot\nabla\phi\rangle ds \right| 
     \\
     &+
      \left| \int_0^T \langle \tilde{\rho}^{\varepsilon_n}_s-\tilde{\rho}_s,\tilde{g}^{\varepsilon_n}_s\cdot\nabla\phi\rangle ds \right| ,
\end{align*}
and notice that both terms at the right-hand-side above approach $0$ almost surely with respect to the measure $\mathbb{P}^{\delta}$, the first one due to the weak convergence of $\tilde{g}^{\varepsilon_n} \rightharpoonup \tilde{g}$ in $L^2_t\mathcal{H}_0$, the second one due to the strong convergence of $\tilde{\rho}^{\varepsilon_n}$ to $\tilde{\rho}$ in $C_t\tilde{H}^{-}_x$ and the uniform boundedness of 
$\frac{{\nabla}\phi}{w} \cdot \tilde{g}^{\varepsilon_n}$ in $L^2_tH^{\eta}_x$ for each $\eta \leq d/2+\alpha$ (here $w$ is the weight appearing in the definition of the weighted Sobolev space $\tilde{H}^-_x$).

Similarly, $\tilde{\rho}^{1}$ (resp. $\tilde{\rho}^{2}$) is a weak solution of the transport equation on the time interval $[0,T+1]$, with initial condition $\rho_0^1$ (resp. $\rho_0^2$) and advecting velocity field $b^*+\tilde{g}^*$.

Due to the regularity of $\tilde{\rho}^1, b^*, \tilde{g}^*$, by DiPerna-Lions theory $\tilde{\rho}^{1}$ is $\PP^\delta$-almost surely the unique renormalized solution of the transport equation with initial condition $\rho_0^1$ and (random) drift $b^*+\tilde{g}^*$. 

Take any Lipschitz continuous $h\in C_b(\mathcal{E})$ of Lipschitz constant less or equal than $L \in (0,\infty)$. Then
\begin{align}\label{estimate_convergence_law}
\left|\mathbb{E}^{\varepsilon_n}
\left[h(\rho^{\varepsilon_n,\rho_0^{\varepsilon_n},g^{\varepsilon_n}})\right]-\mathbb{E}^{0}\left[h(\rho^{\rho_0,g})\right] \right|
&=
\left\lvert\mathbb{E}^{\delta}
\left[h(\tilde{\rho}^{\varepsilon_n})-h(\rho^{\rho_0,\tilde{g}})\right]\right\rvert 
\\
& \leq  
\left\lvert\mathbb{E}^{\delta}\left[h(\tilde{\rho}^{\varepsilon_n})-h(\tilde{\rho}^{\varepsilon_n,1}|_{[0,T]})\right]\right\rvert\notag
\\
& \quad+
\left\lvert\mathbb{E}^{\delta}\left[h(\tilde{\rho}^{\varepsilon_n,1}|_{[0,T]})-h(\tilde{\rho}^{1}|_{[0,T]})\right]\right\rvert\notag
\\ 
& \quad +
\left\lvert\mathbb{E}^{\delta}\left[h(\tilde{\rho}^{1}|_{[0,T]})-h(\rho^{\rho_0,\tilde{g}})\right]\right\rvert\notag
\\ 
& \leq  \notag
L\delta
+
\left\lvert\mathbb{E}^{\delta}\left[h(\tilde{\rho}^{\varepsilon_n,1}|_{[0,T]})-h(\tilde{\rho}^{1}|_{[0,T]})\right]\right\rvert,
\end{align}
We can control two out of the three summand above by a multiple of $\delta$, because we have chosen the initial conditions $\rho_0^{\varepsilon_n,2}$ and $\rho_0^2$ properly small, cf. equations \eqref{estimate_stochastc_splitting} and \eqref{eq:distance_renormali}. Here we have used that $(\rho^{\rho_0,g},\rho^{\rho_0^1,g^*})$ has the same law of $(\rho^{\rho_0,\tilde{g}},\tilde{\rho}^1)$ since $(g,g^*)$ has the same law of $(\tilde{g},\tilde{g}^*)$.
The remaining term at the right-hand-side of \eqref{estimate_convergence_law} is infinitesimal as $n \to \infty$ by the $\PP^\delta$-almost sure convergence $\tilde{\rho}^{\varepsilon_n,1} \to \tilde{\rho}$ in $\mathcal{E}_*$ given by Jakubowski-Skorokhod. 
Therefore, taking the supremum limit as $n \to \infty$ of both sides, by the discussion above we deduce the following. For each $\delta>0$ there exists a sub-subsequence $\varepsilon_n=\varepsilon_n(\delta)$ such that for each $h\in C_b(\mathcal{E})$ with Lipschitz constant less or equal than $L$
\begin{align*}
    \limsup_{n\rightarrow +\infty} \left\lvert\mathbb{E}^{\varepsilon_n}
\left[h(\rho^{\varepsilon_n,\rho_0^{\varepsilon_n},g^{\varepsilon_n}})\right]-\mathbb{E}^{0}\left[h(\rho^{\rho_0,g})\right]\right\rvert\leq L\delta.
\end{align*}
As $\delta$ is arbitrary, by a diagonal argument we can find a further sub-subsequence such that
\begin{align*}
    \lim_{n\rightarrow +\infty}\left\lvert\mathbb{E}^{\varepsilon_n}
\left[h(\rho^{\varepsilon_n,\rho_0^{\varepsilon_n},g^{\varepsilon_n}})\right]-\mathbb{E}^{0}\left[h(\rho^{\rho_0,g})\right] \right\rvert=0.
\end{align*}
The latter implies the first part of the statement of \autoref{stability_theorem}.

\emph{Step 2: Strong convergence in the auxiliary probability space.} Now we assume $\operatorname{div}\,b=0$. We want to show that up to non-relabeled subsequences, $\tilde{\rho}^{\varepsilon_n,1}\rightarrow \tilde{\rho}^{1}$ in $\mathscr{E},\ \mathbb{P}^{\delta}-a.s.$ This step follows some ideas of \autoref{strong_convergence_eps}, however some changes are required in order to treat the poor analytical convergence of $\tilde{g}^{\varepsilon_n,*}$ to $\tilde{g}^*$. 

Step 1 of the mentioned argument proceeds without any change. 
Secondly, we fix $\eta>0$. Arguing exactly as in Step 4 of the mentioned proof, for each $\eta>0$ we can find $R=R(\eta)$ and a subsequence $\varepsilon_n=\varepsilon_n(\eta)$ such that 
\begin{align*}
    \limsup_{n\rightarrow+\infty }\sup_{t\in \tau_\omega\cap [0,T+\frac{1}{4}]}\int_{\R^d\setminus O_R} \lvert \tilde{\rho}^{\varepsilon_n,1}_t(x)\rvert^2dx\leq \eta,
    \quad  \sup_{t\in \tau_\omega\cap [0,T+\frac{1}{4}]}\int_{\R^d\setminus O_R} \lvert \tilde{\rho}^{1}_t(x)\rvert^2dx\leq \eta .
\end{align*}
Now we continue with {Step 2} of \autoref{strong_convergence_eps}.
Defining the functions $f^n_{\phi},\ f_{\phi}$ in the same way, since $\tilde{\rho}^{\varepsilon_n,1}$ is approaching a renormalized solution, due to the strong convergence in $L^2_{t,*}L^2_x$ and \eqref{bounds_renormalized},
\begin{align*}
     f^n_{\phi}(0)&+\int_0^{T+1}\psi_s \langle (b^*_s+\tilde{g}^{\varepsilon_n,*})\cdot\nabla\phi, \lvert \tilde{\rho}^{\varepsilon_n,1}_s\rvert^2\rangle ds+\int_0^{T+1} \partial_s \psi_s f^n_{\phi}(s)ds
     \\  
    & \rightarrow  
     f_{\phi}(0)+\int_0^{T+1} \psi_s \langle (b^*_s+\tilde{g}^*)\cdot\nabla\phi, \lvert \tilde{\rho}^{1}_s\rvert^2\rangle ds+\int_0^{T+1} \partial_s \psi_s f_{\phi}(s)ds.
\end{align*}
Since $\tilde{\rho}^{\varepsilon_n,1}$ is converging to a renormalized solution, \autoref{lem:beta_rho} yields 
\begin{align*}
\varepsilon_n \sum_{k\in \N}\int_0^{T+1} \psi_s \langle \sigma_k\cdot\nabla\phi, \lvert \tilde{\rho}^{\varepsilon_n,1}_s\rvert^2\rangle dW^{\varepsilon_n,k}_s+\varepsilon_n^2\int_0^{T+1} \psi_s \langle\lvert \tilde{\rho}^{\varepsilon_n,1}_s\rvert^2,\Delta\phi\rangle ds-\langle \langle d\mathcal{D}^{\varepsilon_n,*}, \psi \phi\rangle\rangle   \rightarrow 0. 
\end{align*}
Up to passing to further non-relabeled subsequences, the analogue of \eqref{eq:each_term_individually} holds for $\tilde{\rho}^{\varepsilon_n,1}$ on the time interval $[0,T+1]$.
Therefore, for each $t\in [0,T+\frac{1}{2}],$ also the signed quantity 
\begin{align}\label{convergence_timet_2}
    0\leq \langle \langle d\mathcal{D}^{\varepsilon_n,*}, \one_{[0,t]}\phi\rangle\rangle 
    \leq 
    \langle \langle d\mathcal{D}^{\varepsilon_n,*}, \psi\phi\rangle\rangle   \rightarrow 0.
\end{align}
By similar arguments as those in the proof of the second part of \autoref{lem_Backward_convergence}, there exists $t'\in \tau_\omega\cap (T,T+\frac{1}{4}]$ such that
\begin{align*}
    \lvert f^n_{\phi}(t')-f_{\phi}(t')\rvert 
    &\leq 
    \int_0^{T+1} \lvert\langle b_s\cdot\nabla \phi, \lvert \tilde{\rho}^{\varepsilon_n,1}_s \rvert^2-\lvert \tilde{\rho}^{1}_s \rvert^2\rangle\rvert ds
    + 
    \sup_{t\in [0,T+1]}\varepsilon_n \left\lvert\sum_{k\in \N}\int_0^t \langle \sigma_k\cdot\nabla\phi, \lvert \tilde{\rho}^{\varepsilon_n,1}_s\rvert^2\rangle dW^k_s\right\rvert
    \\
    &+
    \varepsilon_n^2\int_0^{T+1}  \lvert\langle\lvert \tilde{\rho}^{\varepsilon_n,1}_s\rvert^2,\Delta\phi\rangle \rvert^2 ds+\langle \langle d\mathcal{D}^{\varepsilon_n}, \psi\phi\rangle\rangle 
    \\ 
    &+
    \left\lvert \int_0^{T+1} \one_{[0,t']}(s)\left(\langle \tilde{g}^{\varepsilon_n,*}_s\cdot\nabla \phi, \lvert \tilde{\rho}^{\varepsilon_n,1}_s \rvert^2\rangle-\langle \tilde{g}^{*}_s\cdot\nabla \phi, \lvert \tilde{\rho}^{1}_s \rvert^2\rangle\right) ds  \right\rvert\rightarrow 0.
\end{align*}
The latter is a consequence of uniform bounds in $L^\infty_t L^\infty_x$ for $\tilde{\rho}^{\varepsilon_n,1}$ and $\tilde{\rho}^1$, almost sure convergence $\tilde{X}^n \to \tilde{X}$ in the space $\mathbb{X}$ given by Jakubowski-Skorokhod Representation Theorem, the convergence \eqref{eq:each_term_individually}  for $\tilde{\rho}^{\varepsilon_n,1}$ on the time interval $[0,T+1]$, the previous bound \eqref{convergence_timet_2}, and the following computation
\begin{align*}
    &\left\lvert \int_0^{T+1} \one_{[0,t']}(s)\left(\langle \tilde{g}^{\varepsilon_n,*}_s\cdot\nabla \phi, \lvert \tilde{\rho}^{\varepsilon_n,1}_s \rvert^2\rangle-\langle \tilde{g}^{*}_s\cdot\nabla \phi, \lvert \tilde{\rho}^{1}_s \rvert^2\rangle\right) ds  \right\rvert\\ & \leq \int_0^{T+1}\lvert \langle \tilde{g}^{\varepsilon_n,*}_s\cdot\nabla \phi, \lvert \tilde{\rho}^{\varepsilon_n,1}_s \rvert^2-\lvert \tilde{\rho}^{1}_s \rvert^2\rangle\rvert ds+ \left\lvert \int_0^{T+1} \one_{[0,t']}(s)\langle (\tilde{g}^{\varepsilon_n,*}_s-\tilde{g}^{*}_s)\cdot\nabla \phi, \lvert \tilde{\rho}^{1}_s \rvert^2\rangle ds\right\rvert\\ & \leq\lVert \tilde{g}^{\varepsilon_n,*}\cdot\nabla \phi ( \tilde{\rho}^{\varepsilon_n,1}+\tilde{\rho}^{1})  \rVert_{L^2_{t,*}L^2_x}\lVert \tilde{\rho}^{\varepsilon_n,1}-\tilde{\rho}^{1}  \rVert_{L^2_{t,*}L^2_x}\\ & +\left\lvert \int_0^{T+1} \one_{[0,t']}(s)\langle (\tilde{g}^{\varepsilon_n,*}_s-\tilde{g}^{*}_s)\cdot\nabla \phi, \lvert \tilde{\rho}^{1}_s \rvert^2\rangle ds\right\rvert\rightarrow 0.
\end{align*}
Hence, by approximating the indicator function of the ball of radius $R$ we get that for such $t'>T$
\begin{align*}
\left|\int_{O_R}\lvert\tilde{\rho}^{\varepsilon_n,1}_{t'}(x)\rvert^2-\lvert \tilde{\rho}^{1}_{t'}(x)\rvert^2dx \right|\rightarrow 0.
\end{align*}
Let us now consider a generic $t\in [0,t']\cap \tau_\omega$.
In the following we use that the $L^2_x$ norm of $\tilde{\rho}^{\varepsilon_n,1}$ is non-increasing in time by \autoref{lem:beta_rho}, up to taking $t \in \tau_\omega$, whereas the $L^2_x$ norm of renormalized solutions is constant with respect to time.
On the one hand, since at most $\eta$ norm is concentrated outside the ball of radius $R$ and $\tilde{\rho}^{1}$ is renormalized we have
\begin{align*}
    \int_{O_R}\lvert\tilde{\rho}^{\varepsilon_n,1}_{t}(x)\rvert^2 dx&\geq \int_{\R^d}\lvert\tilde{\rho}^{\varepsilon_n,1}_{t}(x)\rvert^2 dx-\int_{\R^d\setminus O_R}\lvert\tilde{\rho}^{\varepsilon_n,1}_{t}(x)\rvert^2 dx\\ & \geq \int_{\R^d}\lvert\tilde{\rho}^{\varepsilon_n,1}_{t'}(x)\rvert^2 dx-\int_{\R^d\setminus O_R}\lvert\tilde{\rho}^{\varepsilon_n,1}_{t}(x)\rvert^2 dx\\ & \geq \int_{O_R}\lvert \tilde{\rho}^{1}_{t'}(x)\rvert^2 dx+ \int_{O_R}\left(\lvert\tilde{\rho}^{\varepsilon_n,1}_{t'}(x)\rvert^2-\lvert \tilde{\rho}^{1}_{t'}(x)\rvert^2\right) dx-\int_{\R^d\setminus O_R}\lvert\tilde{\rho}^{\varepsilon_n,1}_{t}(x)\rvert^2 dx\\ & \geq \int_{O_R}\lvert \tilde{\rho}^{1}_{t}(x)\rvert^2 dx+ \int_{O_R}\left(\lvert\tilde{\rho}^{\varepsilon_n,1}_{t'}(x)\rvert^2-\lvert \tilde{\rho}^{1}_{t'}(x)\rvert^2\right) dx-\int_{\R^d\setminus O_R}\lvert\tilde{\rho}^{\varepsilon_n,1}_{t}(x)\rvert^2 dx-\eta.
\end{align*}
On the other hand 
\begin{align*}
    \int_{O_R}\lvert\tilde{\rho}^{\varepsilon_n,1}_{t}(x)\rvert^2 dx& \leq \int_{\R^d}\lvert\tilde{\rho}^{1}_{t}(x)\rvert^2 dx  \leq \int_{O_R}\lvert\tilde{\rho}^{1}_{t}(x)\rvert^2 dx +\eta. 
\end{align*}
Combining the two above we obtain 
\begin{align*}
\limsup_{n\rightarrow +\infty}\sup_{t\in [0,t']\cap \tau_\omega}\left| \int_{O_R}\lvert\tilde{\rho}^{\varepsilon_n,1}_{t}(x)\rvert^2-\lvert \tilde{\rho}^{1}_{t}(x)\rvert^2dx \right| 
& \leq 
2\eta,
\end{align*}
and consequently, arguing as in {Step 3} of \autoref{strong_convergence_eps},
\begin{align*}
    \limsup_{n\rightarrow +\infty}\sup_{t\in [0,t']\cap \tau_\omega}\int_{O_R}\lvert\tilde{\rho}^{\varepsilon_n,1}_{t}(x)- \tilde{\rho}^{1}_{t}(x)\rvert^2dx& \leq 2\eta.
\end{align*}
Lastly, as in {Step 5} of we obtain \autoref{strong_convergence_eps}
\begin{align*}
     \limsup_{n\rightarrow +\infty}\sup_{t\in [0,T]}\lVert \tilde{\rho}^{\varepsilon_n,1}_{t}- \tilde{\rho}^{1}_{t}\rVert_{L^2_x}^2\leq 6\eta, \quad \mathbb{P}^{\delta} \mbox{-almost surely.}
     \end{align*}
Due to the arbitrariness of $\eta$, by a diagonal argument we can find a subsequence converging almost surely to $\tilde{\rho}^{1}_{t}$ in $L^\infty_tL^2_x$ (actually with the supremum in time on the full $[0,T])$. The convergence takes place also in $L^{\infty}_tL^p_x$, following verbatim the last part of the proof of \autoref{thm:convergence}.

\emph{Step 3: End of the proof.} Concerning the second part of the statement, again it is enough to show that for each subsequence $\varepsilon_n$ there exists a non-relabeled sub-subsequence such that the claim holds. We start observing that due to previous step and by triangle inequality, arguing as in \eqref{estimate_convergence_law}, there exists for each $\delta$ a sub-subsequence, depending on $\delta$, such that
\begin{align}\label{pointwise_inequality_delta}
\limsup_{n\rightarrow+\infty}\norm{\tilde{\rho}^{\varepsilon_n}-\rho^{\rho_0,\tilde{g}}}_{\mathscr{E}}\leq \delta,\quad \mathbb{P}^{\delta}-a.s.
\end{align}
Secondly, due to \autoref{corollary_integrals}
\begin{align*}
\mathbb{E}^{\varepsilon_n}[ d_{\mathscr{E}}(\rho^{{\varepsilon_n},g^{\varepsilon_n}},F) \wedge 1]
&=
{\mathbb{E}}^{\delta}[ d_{\mathscr{E}}(\tilde{\rho}^{\varepsilon_n},F) \wedge 1]
\\
&=
\int_0^1 
{\PP^{\delta}}\{d_{\mathscr{E}}(\tilde{\rho}^{\varepsilon_n},F) > \lambda\} d\lambda
\\&=
\int_0^1  \left( 1-  
{\PP}^{\delta}\{d_{\mathscr{E}}(\tilde{\rho}^{\varepsilon_n},F) \leq \lambda\} \right)  d\lambda.
\end{align*}
Obviously it holds 
\begin{align*}
  \int_{\sqrt{\delta}}^1 \left( 1-  
{\PP}^{\delta}\{d_{\mathscr{E}}(\tilde{\rho}^{\varepsilon_n},F) \leq \lambda\} \right) d\lambda 
&\leq  
\int_0^1  \left( 1-  
{\PP}^{\delta}\{d_{\mathscr{E}}(\tilde{\rho}^{\varepsilon_n},F) \leq \lambda\}\right)  d\lambda
\\
&\leq 
\sqrt{\delta}+\int_{\sqrt{\delta}}^1 \left( 1-  
{\PP}^{\delta}\{d_{\mathscr{E}}(\tilde{\rho}^{\varepsilon_n},F) \leq \lambda\} \right) d\lambda. 
\end{align*}

In addition, it is elementary to check to following set inclusions: 
\begin{align*}
\{d_{\mathscr{E}}({\rho}^{\rho_0,\tilde{g}},\tilde{\rho}^{\varepsilon_n}) \leq \sqrt{\delta}\lambda\}
\cap
\{d_{\mathscr{E}}(\tilde{\rho}^{\varepsilon_n},F) \leq \lambda\}
&\subset 
\{d_{\mathscr{E}}({\rho}^{\rho_0,\tilde{g}},F) \leq (1+\sqrt{\delta})\lambda\},
\\
\{d_{\mathscr{E}}({\rho}^{\rho_0,\tilde{g}},\tilde{\rho}^{\varepsilon_n}) \leq \sqrt{\delta}\lambda\}
\cap
\{d_{\mathscr{E}}({\rho}^{\rho_0,\tilde{g}},F) \leq (1-\sqrt{\delta}) \lambda\}
&\subset 
\{d_{\mathscr{E}}(\tilde{\rho}^{\varepsilon_n},F) \leq \lambda\}.
\end{align*}
Let us observe that for each $\lambda\geq \sqrt{\delta}$, the event $\{d_{\mathscr{E}}({\rho}^{\rho_0,\tilde{g}},\tilde{\rho}^{\varepsilon_n}) > \sqrt{\delta}\lambda\}$ is $\mathcal{F}^{\delta}$-measurable and its probability goes to zero as $n \to \infty$ by Dominated Convergence Theorem and \eqref{pointwise_inequality_delta}.
The measurability of the event follows arguing similarly to \autoref{lem:|rho-f|_closed} and \autoref{lem:same_law}.  
Indeed, since $\mathcal{E}$ is Polish and $\mathcal{E}=\mathscr{E}$ as sets, by those arguments there exists a closed set in $G_{\delta}\subset \mathscr{B}(\mathcal{E}\times \mathcal{E})=\mathscr{B}(\mathcal{E})\otimes \mathscr{B}(\mathcal{E})$ such that 
\begin{align*}
 \{({\rho}^{\rho_0,\tilde{g}},\tilde{\rho}^{\varepsilon_n} )\in \mathscr{E}\times \mathscr{E}:d_{\mathscr{E}}({\rho}^{\rho_0,\tilde{g}},\tilde{\rho}^{\varepsilon_n} ) \leq  \sqrt{\delta}\lambda\}=\{({\rho}^{\rho_0,\tilde{g}},\tilde{\rho}^{\varepsilon_n} )\in G_{\delta}\}   
\end{align*}
which is $\mathcal{F}^{\delta}$-measurable since both $\rho^{\rho_0,\tilde{g}}$ and $ \tilde{\rho}^{\varepsilon_n}:\Omega^{\delta}\rightarrow \mathcal{E}$ are random variables.
Therefore we have
\begin{align*}
{\PP}^{\delta}\{d_{\mathscr{E}}(\tilde{\rho}^{\varepsilon_n},F) \leq \lambda\}
\leq
{\PP}^{\delta}\{d_{\mathscr{E}}(\rho^{\rho_0,\tilde{g}},F) \leq (1+\sqrt{\delta})\lambda\}
+
{\PP}^{\delta}\{d_{\mathscr{E}}({\rho}^{\rho_0,\tilde{g}},\tilde{\rho}^{\varepsilon_n} ) > \sqrt{\delta}\lambda\},\\
{\PP}^{\delta}\{d_{\mathscr{E}}(\tilde{\rho}^{\varepsilon_n},F) \leq \lambda\}\geq {\PP}^{\delta}\{d_{\mathscr{E}}(\rho^{\rho_0,\tilde{g}},F) \leq (1-\sqrt{\delta})\lambda\}-{\PP}^{\delta}\{d_{\mathscr{E}}({\rho}^{\rho_0,\tilde{g}},\tilde{\rho}^{\varepsilon_n} ) > \sqrt{\delta}\lambda\}.
\end{align*}
Consequently, we have
\begin{align*}
\limsup_{ n \to \infty}
{\PP^{\delta}}\{d_{\mathscr{E}}(\tilde{\rho}^{\varepsilon_n},F) \leq \lambda\}
\leq
{\PP}^{\delta}\{d_{\mathscr{E}}({\rho}^{\rho_0,\tilde{g}},F) \leq (1+\sqrt{\delta})\lambda\}.
\end{align*}
On the other hand, by Fatou's lemma and change of variables:
\begin{align*}
\liminf_{n \to \infty}
\mathbb{E}^{\varepsilon_n}[ d_{\mathscr{E}}(\rho^{{\varepsilon_n},g^{\varepsilon_n}},F) \wedge 1]
&=
\liminf_{n \to \infty}
\int_{0}^1 \left( 1-  
{\PP}^{\delta}\{d_{\mathscr{E}}(\tilde{\rho}^{\varepsilon_n},F)) \leq \lambda\}  \right) d\lambda
\\
&\geq
\int_{\sqrt{\delta}}^1 \left(  1- \limsup_{n \to \infty}  
{\PP}^{\delta}\{d_{\mathscr{E}}(\tilde{\rho}^{\varepsilon_n},F)) \leq \lambda\}  \right) d\lambda
\\
&\geq
\int_{\sqrt{\delta}}^1 \left( 1-   
{\PP}^{\delta}\{d_{\mathscr{E}}({\rho}^{\rho_0,\tilde{g}},F) \leq (1+\sqrt{\delta})\lambda\} \right) d\lambda
\\
&=
\int_{\sqrt{\delta}}^1 {\PP}^{\delta}\{d_{\mathscr{E}}({\rho}^{\rho_0,\tilde{g}},F) > (1+\sqrt{\delta})\lambda\}  d\lambda
\\
&=
\frac{1}{1+\sqrt{\delta}}
\int_{\sqrt{\delta}(1+\sqrt{\delta})}^{1+\sqrt{\delta}} {\PP}^{\delta}\{d_{\mathscr{E}}({\rho}^{\rho_0,\tilde{g}},F) > \lambda\}  d\lambda
\\
&\geq
\int_0^{1} {\PP}^{\delta}\{d_{\mathscr{E}}({\rho}^{\rho_0,\tilde{g}},F) > \lambda\}  d\lambda-\frac{\sqrt{\delta}}{1+\sqrt{\delta}}\\ & =\mathbb{E}^{0}[ d_{\mathscr{E}}(\rho^{\rho_0,g},F) \wedge 1]-\frac{\sqrt{\delta}}{1+\sqrt{\delta}}.
\end{align*}

Concerning the other inequality, we also have for each $n\in \N$
\begin{align*}
\mathbb{E}^{\varepsilon_n}[ d_{\mathscr{E}}(\rho^{{\varepsilon_n},g^{\varepsilon_n}},F) \wedge 1]& \leq 
\sqrt{\delta}+\int_{\sqrt{\delta}}^1 \left( 1-  
{\PP}^{\delta}\{d_{\mathscr{E}}(\tilde{\rho}^{\varepsilon_n},F) \leq \lambda\} \right) d\lambda 
\\ 
& \leq  
\sqrt{\delta}+\int_{\sqrt{\delta}}^1 {\PP}^{\delta}\{d_{\mathscr{E}}({\rho}^{\rho_0,\tilde{g}},\tilde{\rho}^{\varepsilon_n}) >\sqrt{\delta}\lambda\}d\lambda
\\ 
&+\int_{\sqrt{\delta}}^1 \left( 1-  
{\PP}^{\delta}\{d_{\mathscr{E}}({\rho}^{\rho_0,\tilde{g}},F) \leq (1-\sqrt{\delta})\lambda\} \right) d\lambda
\\ 
& \leq 
\sqrt{\delta}+\int_{\sqrt{\delta}}^1 {\PP}^{\delta}\{d_{\mathscr{E}}({\rho}^{\rho_0,\tilde{g}},\tilde{\rho}^{\varepsilon_n}) >\sqrt{\delta}\lambda\}d\lambda+{\mathbb{E}}^0[ d_{\mathscr{E}}(\rho^{\rho_0,g},F) \wedge 1].
\end{align*}
Taking the limsup of both sides, by Dominated Convergence  
\begin{align*}
    \limsup_{n\rightarrow+\infty} \mathbb{E}^{\varepsilon_n}[ d_{\mathscr{E}}(\rho^{{\varepsilon_n},g^{\varepsilon_n}},F) \wedge 1] \leq \lambda\}  d\lambda\leq \sqrt{\delta}+{\mathbb{E}}^0[ d_{\mathscr{E}}(\rho^{\rho_0,g},F) \wedge 1].
\end{align*}
Due to the arbitrariness of $\delta$, by a diagonal argument we can find a sub-subsequence such that both of the following are valid:
\begin{align*}
\liminf_{n \to \infty}
\mathbb{E}^{\varepsilon_n}[ d_{\mathscr{E}}(\rho^{{\varepsilon_n},g^{\varepsilon_n}},F) \wedge 1]&\geq \mathbb{E}^{0}[ d_{\mathscr{E}}(\rho^{\rho_0,g},F) \wedge 1],
\\
   \limsup_{n\rightarrow+\infty} \mathbb{E}^{\varepsilon_n}[ d_{\mathscr{E}}(\rho^{{\varepsilon_n},g^{\varepsilon_n}},F) \wedge 1] &\leq {\mathbb{E}}^0[ d_{\mathscr{E}}(\rho^{\rho_0,g},F) \wedge 1].
\end{align*}
Combining the two above, the last claim of \autoref{stability_theorem} follows.
\end{proof}

\begin{prop}\label{compactness_transport_map}
Let $K$ a compact subset of $B$ and let $N\in (0,\infty)$. Denote 
\begin{align*}
\Gamma_{K,N} := \{ \rho=\rho^{\rho_0,g} \,: \, \rho_0 \in K, \, g \in S^N \}.
\end{align*}
Then $\Gamma_{K,M}$ is compact both in $\mathcal{E}$ and $\mathscr{E}$.
\end{prop}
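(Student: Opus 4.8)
\emph{Strategy.} The plan is to realize $\Gamma_{K,N}$ as a continuous image of a compact set. Since $K$ is compact in the strong topology of $L^2_x\cap L^p_x$ and $S^N$ — being a closed bounded ball of the separable Hilbert space $L^2_t\mathcal{H}_0$ — is compact and metrizable for the weak topology, the product $K\times S^N$ is compact and metrizable; it will therefore be enough to prove that the solution map
\[
\Theta:K\times S^N\longrightarrow\mathcal{E},\qquad\Theta(\rho_0,g):=\rho^{\rho_0,g},
\]
is well defined and continuous, and that it remains continuous with the finer topology of $\mathscr{E}$ on the target, since compactness of $\Gamma_{K,N}=\Theta(K\times S^N)$ in $\mathcal{E}$ and in $\mathscr{E}$ is then immediate. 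Well-posedness of $\rho^{\rho_0,g}$ follows from \cite[Theorem II.3]{diperna1989ordinary}, because $b+g\in L^1_tW^{1,q}_{x,\mathrm{loc}}$ and $\dvg(b+g)=\dvg b\in L^1_tL^\infty_x$; I would record at the outset the uniform a priori bounds $\sup_t\|\rho^{\rho_0,g}_t\|_{L^2_x\cap L^p_x}\le e^{\|\dvg b\|_{L^1_tL^\infty_x}}$ and $\|\rho^{\rho_0,g}\|_{C^\gamma_t\tilde{H}^{-\sigma}_x}\le C_{K,N}$, valid for all $\rho_0\in K$, $g\in S^N$ and suitable $\gamma<1/2$, $\sigma>d/2+1$ — the first by the renormalization identity, the second as in the proof of \autoref{prop_well_posed} with $\varepsilon=0$ by estimating $\partial_t\rho=-\dvg((b+g)\rho)+(\dvg b)\rho$.

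\emph{Continuity into $\mathcal{E}$.} Given $(\rho_0^n,g^n)\to(\rho_0,g)$ in $K\times S^N$, I would set $\rho^n:=\rho^{\rho_0^n,g^n}$; by the a priori bounds and \autoref{Compactness_Criterion} the family $\{\rho^n\}$ is relatively compact in $\mathcal{E}$, so it suffices to show that any subsequential limit coincides with $\rho^{\rho_0,g}$. If $\rho^n\to\bar\rho$ in $\mathcal{E}$ along a subsequence, I would pass to the limit in the weak formulation of \eqref{eq:transport} with drift $b+g$ exactly as in Step 2 of \autoref{weak_convergence_eps}; the only term requiring care is $\int_0^T\langle\rho^n_s,g^n_s\cdot\nabla\phi\rangle\,ds$, which I would handle as in Step 1 of the proof of \autoref{stability_theorem}, splitting it as $\langle\rho^n_s-\bar\rho_s,g^n_s\cdot\nabla\phi\rangle+\langle\bar\rho_s,(g^n_s-g_s)\cdot\nabla\phi\rangle+\langle\bar\rho_s,g_s\cdot\nabla\phi\rangle$: the first term vanishes by the strong convergence in $C_t\tilde{H}^-_x$ together with the uniform bound on $(\nabla\phi/w)\cdot g^n$ in $L^2_tH^{\eta}_x$ furnished by Sobolev embedding, the second by the weak convergence $g^n\rightharpoonup g$ in $L^2_t\mathcal{H}_0$ paired against $\bar\rho\,\nabla\phi\in L^2_tL^2_x$, which lies in the predual of $\mathcal{H}_0$. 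This identifies $\bar\rho$ as a weak solution of \eqref{eq:transport} with drift $b+g$ and datum $\rho_0$. When $\rho_0\in L^1_x\cap L^\infty_x$ the limit inherits $\bar\rho\in L^\infty_tL^\infty_x$, so \cite[Theorem II.3]{diperna1989ordinary} forces $\bar\rho=\rho^{\rho_0,g}$; for general $\rho_0\in B$ I would use the splitting $\rho_0=\rho_0^1+\rho_0^2$ with $\rho_0^1\in L^1_x\cap L^\infty_x$ and $\|\rho_0^2\|_{L^2_x\cap L^p_x}$ as small as desired, exploiting linearity of \eqref{eq:transport} and the a priori bound to absorb the $\rho_0^2$-contribution, just as in the proof of \autoref{thm:convergence}. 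As the limit does not depend on the subsequence, $\rho^n\to\rho^{\rho_0,g}$ in $\mathcal{E}$; hence $\Theta$ is continuous into $\mathcal{E}$ and $\Gamma_{K,N}$ is compact in $\mathcal{E}$.

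\emph{Continuity into $\mathscr{E}$.} In the case $\dvg b=0$ — the one in which $\mathscr{E}$ is relevant — I would then upgrade $\rho^n\to\rho^{\rho_0,g}$ from $\mathcal{E}$ to $\mathscr{E}$, that is, prove $\sup_t\|\rho^n_t-\rho^{\rho_0,g}_t\|_{L^2_x}+\sup_t\|\rho^n_t-\rho^{\rho_0,g}_t\|_{L^p_x}\to0$. By the same splitting-and-linearity reduction it is enough to treat $\rho_0\in L^1_x\cap L^\infty_x$. Since $\dvg b=0$, renormalized solutions conserve the $L^2_x$ norm, so $\|\rho^n_t\|_{L^2_x}=\|\rho_0\|_{L^2_x}=\|\rho^{\rho_0,g}_t\|_{L^2_x}$ for every $t$; combining this with the uniform weak convergence supplied by the convergence in $C_t\mathcal{B}$ and with the compactness of $\{\rho^{\rho_0,g}_t:t\in[0,T]\}$ in $L^2_x$ (because $\rho^{\rho_0,g}\in C_tL^2_x$), a polarization argument — writing $\|\rho^n_t-\rho^{\rho_0,g}_t\|_{L^2_x}^2=2\|\rho_0\|_{L^2_x}^2-2\langle\rho^n_t,\rho^{\rho_0,g}_t\rangle$ and testing the uniform weak convergence against a finite net of that compact set — gives $\langle\rho^n_t,\rho^{\rho_0,g}_t\rangle\to\|\rho_0\|_{L^2_x}^2$ uniformly in $t$, hence $\sup_t\|\rho^n_t-\rho^{\rho_0,g}_t\|_{L^2_x}\to0$. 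The $L^p_x$ part would then follow from the interpolation inequality $\|\cdot\|_{L^p_x}\lesssim\|\cdot\|_{L^1_x\cap L^\infty_x}^{\beta}\|\cdot\|_{L^2_x}^{1-\beta}$ with $\beta=\beta(p)\in(0,1)$ together with the uniform $L^\infty_t(L^1_x\cap L^\infty_x)$ bound, exactly as at the end of the proof of \autoref{thm:convergence}. This shows $\Theta$ is continuous into $\mathscr{E}$ and $\Gamma_{K,N}$ is compact in $\mathscr{E}$.

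\emph{Main obstacle.} As everywhere in this paper, the real difficulty is the passage from $\mathcal{E}$ to $\mathscr{E}$: the non-separability of $\mathscr{E}$ rules out any abstract compactness shortcut, and converting weak uniform-in-time convergence into strong convergence relies decisively on the structural fact that renormalized solutions conserve their $L^2_x$ norm when $\dvg b=0$. A milder point is the limit in the drift term $\langle\rho^n_s,g^n_s\cdot\nabla\phi\rangle$, where only weak $L^2_t\mathcal{H}_0$ convergence of $g^n$ is at hand; this is absorbed by the Sobolev regularity encoded in $\mathcal{H}_0$.
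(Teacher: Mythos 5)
Your argument is essentially correct, but it takes a much longer road than the paper does. The paper's proof is three lines: after extracting subsequences $\rho_0^n\to\rho_0$ in $K$ and $g^n\rightharpoonup g$ in $S^N$, it invokes the DiPerna--Lions stability theorem \cite[Theorem II.7]{diperna1989ordinary} directly, which yields $\rho^{\rho_0^n,g^n}\to\rho^{\rho_0,g}$ \emph{strongly} in $C_t(L^2_x\cap L^p_x)$; this single convergence dominates both the $\mathcal{E}$ and the $\mathscr{E}$ topologies, so compactness in both spaces follows at once, with no splitting of the initial datum, no compactness criterion, and no polarization step. What you do instead is re-derive, in the deterministic setting, the continuity of the solution map by hand: tightness-type bounds plus \autoref{Compactness_Criterion} and identification of the limit in the weak formulation for the $\mathcal{E}$ part, then conservation of the $L^2_x$ norm and polarization against the compact set $\{\rho^{\rho_0,g}_t\}_{t\in[0,T]}$ for the $\mathscr{E}$ part. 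This is a legitimate and more self-contained route --- in particular it handles explicitly the fact that $g^n$ converges only weakly, by pairing it against the strongly convergent $\rho^n$ --- and it mirrors the machinery the paper builds in \autoref{sec:convergence} and \autoref{stability_theorem} for the stochastic problem. What the paper's approach buys is brevity and, importantly, generality: it does not need $\dvg\,b=0$.

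That last point is the one genuine gap in your write-up. Your upgrade from $\mathcal{E}$ to $\mathscr{E}$ explicitly assumes $\dvg\,b=0$ (so that renormalized solutions conserve the $L^2_x$ norm), whereas \autoref{compactness_transport_map} asserts compactness in $\mathscr{E}$ under the standing hypotheses of \autoref{thm:convergence}, i.e.\ only $\dvg\,b\in L^1_tL^\infty_x\cap L^\infty_tH^\vartheta_x$. Without the divergence-free assumption the $L^2_x$ norm is no longer constant in time and your polarization identity breaks; one would have to replace exact conservation by the renormalization balance involving $\dvg\,b$ and run a Gr\"onwall-type comparison, or simply fall back on \cite[Theorem II.7]{diperna1989ordinary} as the paper does. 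Since the proposition is only ever applied with the $\mathscr{E}$ topology in the divergence-free setting of \autoref{thm:strong_LDP}, the restriction is harmless for the paper's purposes, but as a proof of the stated proposition it is incomplete on this point.
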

\begin{proof}
    Since $\mathcal{E}$ and $\mathscr{E}$ are metric spaces it is enough to show sequential compactness. Let $\rho^n\in \Gamma_{K,N}$ and $\rho_0^n,\ b+g^n$ the corresponding initial condition and advecting velocity fields. By compactness of $K$ and $S^N$, up to passing to non-relabeled subsequences, we can assume that \begin{align}\label{convergence_data_compactness_lemma}
        \rho_0^n\rightarrow \rho_0\in K,
        \quad
        g^n\rightharpoonup g\in S^N.  
    \end{align}
    Let $\rho\in \mathscr{E}\cap C_t(L^2_x\cap L^p_x)$ be the unique renormalized solution of \eqref{eq:transport} with initial condition $\rho_0$ and advecting velocity fields $b+g$. Then, by \cite[Theorem II.7]{diperna1989ordinary}, relation \eqref{convergence_data_compactness_lemma} implies
    \begin{align*}
        \rho^n\rightarrow \rho\quad \text{in }C_t(L^2_x\cap L^p_x).
    \end{align*}
    The latter readily implies the convergence both in either $\mathcal{E}$ and $\mathscr{E}$ and the claim.
\end{proof}
\section{Large Deviations Principles}\label{Sec_LDP}
\subsection{The Uniform Laplace Principle in $\mathcal{E}$ and its Consequences}\label{Laplace_mathcalE}

We are ready to give the proof of \autoref{thm:LDP}. As already discussed in the Introduction, we apply the weak convergence approach by \cite{Bud_Dup}.

\begin{proof}[Proof of \autoref{thm:LDP}]
    The two conditions of \cite[Theorem 5, Assumption 1]{Bud_Dup} are provided by \autoref{compactness_transport_map} and the first part of \autoref{stability_theorem}. It remains to show that for each $v\in \mathcal{E}$ the map $\rho_0\rightarrow I_{\rho_0}(v)$ is a lower continuous map from $B$ to $[0,+\infty]$. 
    The arguments goes in this way. 
    
    First, let us observe that the infimum appearing in \eqref{def_functional} is attained, if finite. 
    Indeed, let $I_{\rho_0}(v)=M<+\infty$ and $g^n$ be a minimizing sequence. 
    Without loss of generality, we can assume $2M< \lVert g^n \rVert_{L^2_t\mathcal{H}_0}^2\leq 2M+1$ and $\lVert g^n \rVert_{L^2_t \mathcal{H}_0}^2\rightarrow 2M$. Then, up to passing to non-relabeled subsequences, there exists $ g\in L^2_t \mathcal{H}_0$ such that 
    \begin{align*}
        g^n\rightharpoonup g\quad \text{in } L^2_t\mathcal{H}_0, \quad \lVert g \rVert_{L^2_t \mathcal{H}_0}^2\leq \liminf_{n\rightarrow +\infty} \lVert g^n \rVert_{L^2_t \mathcal{H}_0}^2=2M.
    \end{align*}
    Due to the convergence of $g^n$ to $g$, by \cite[Theorem II.7]{diperna1989ordinary}, $v$ is the unique renormalized solution of the transport equation \eqref{eq:transport} with initial condition $\rho_0$ and advecting velocity field $b+g$.
    Therefore $g$ realizes the infimum in \eqref{def_functional}.
    
    Now we are ready to prove the lower-semicontinuity of the map $\rho_0\rightarrow I_{\rho_0}(v)$. 
    Fix $\rho_0\in B$ and a family $\{\rho_0^n\}_{n\in \mathbb{N}}\subset B $ converging to $\rho_0$. Without loss of generality we assume $\liminf_{n\rightarrow +\infty}I_{\rho_0^n}(v)=M<+\infty$, otherwise we have nothing to prove. Therefore, since the infimum in \eqref{def_functional} is attained, there exists a subsequence $n_k$ and family $\{g^{n_k}\}_{n_k\in\mathbb{N}}\subset S^{2M+1}$ such that, for each $k$, $v$ is the unique renormalized solution of \eqref{eq:transport} with initial condition $\rho_0^{n_k}$ and advecting velocity field $b+g^{n_k}$.
Up to passing to a further subsequence, which we continue to denote by $g^{n_k}$ for simplicity of notation, there exists $g\in S^{2M+1}$ such that $g^{n_k}\rightharpoonup g$ in $L^2_t \mathcal{H}_0$. 

Due to the convergence of $g^n \rightharpoonup g$ and $\rho_0^{n_k} \to \rho_0$, by \cite[Theorem II.7]{diperna1989ordinary}, $v$ is the unique renormalized solution of\eqref{eq:transport} with initial condition $\rho_0$ and advecting velocity field $b+g$.
Hence, from the lower-semicontinuity of the norm with respect to the weak convergence: 
\begin{align*}
    I_{\rho_0}(v)\leq \frac{1}{2} \lVert g\rVert^2_{L^2_t \mathcal{H}_0}  
    \leq 
    \liminf_{k\rightarrow +\infty}
    \frac{1}{2}\int_0^T \lVert g^{n_k}\rVert^2_{L^2_t \mathcal{H}_0} \leq M=\liminf_{n\rightarrow +\infty} I_{\rho_0^n}(v).
\end{align*}
The proof is complete.
\end{proof}

\begin{rmk}
A close inspection of the proof of \cite{Bud_Dup} and \cite[Chapter 3, Theorem 3.3]{FrWe_book} shows that the family of processes $\{\rho^{\varepsilon,\rho_0}\}$ also satisfies the pointwise LDP on $\mathcal{E}$ with speed $\varepsilon^2$ and rate function $I_{\rho_0}$.
Moreover, also the alternative versions of pointwise LDP  hold true:
\begin{itemize}
\item 
For every $\rho_0^\varepsilon \to \rho_0$ in $B$ and bounded and continuous functions $h: \mathcal{E} \to \R$ it holds
\begin{align*} 
\lim_{\varepsilon \downarrow 0}
\varepsilon^2 \log \mathbb{E} \left[ \exp(-\varepsilon^{-2}h(\rho^{\varepsilon,\rho_0^\varepsilon}))  \right]
=
-
\inf_{\rho \in \mathscr{X}} \left\{ h(\rho) + I_{\rho_0}(\rho)\right\}.
\end{align*}
\item 
For every $\rho_0^\varepsilon \to \rho_0$ in $B$, $\delta>0$, and $M \in (0,\infty)$ it holds
\begin{align*} 
\limsup_{\varepsilon \downarrow 0}
\sup_{m \leq M}
\left( \varepsilon^2 \log \PP \{ d_\mathcal{E} (\rho^{\varepsilon,\rho_0^\varepsilon}, \Phi_{\rho_0}^m) \geq \delta\} + m \right) 
\leq 0,
\end{align*}
\begin{align*} 
\liminf_{\varepsilon \downarrow 0}
\inf_{\rho \in \Phi_{\rho_0}^{M}}
\left( \varepsilon^2 \log \PP (d_\mathcal{E}(\rho^{\varepsilon,\rho_0^\varepsilon},\rho)<\delta) 
+ I_{\rho_0}(\rho)
\right)
\geq 0.
\end{align*}
\end{itemize}
\end{rmk}

\begin{rmk} \label{rmk:LDP}
Let the family $\{\rho^{\varepsilon,\rho_0}\}$ satisfies the LDP on $\mathscr{X}$ uniformly on compacts with speed $\varepsilon^2$ and rate function $I_{\rho_0}$.
Then for every $F \subset \mathscr{X}$ closed and $G \subset \mathscr{X}$ open it holds
\begin{align*} 
\limsup_{\varepsilon \downarrow 0}
\sup_{\rho_0 \in K} \left( \varepsilon^2 \log \PP \{ \rho^{\varepsilon,\rho_0} \in F \}  + \inf_{\rho \in F} I_{\rho_0}(\rho) \right) \leq 0,
\end{align*}
\begin{align*} 
\liminf_{\varepsilon \downarrow 0}
\inf_{\rho_0 \in K} \left( \varepsilon^2 \log \PP \{ \rho^{\varepsilon,\rho_0} \in G \}  + \inf_{\rho \in G} I_{\rho_0}(\rho) \right) \geq 0.
\end{align*}

To see this, one can adapt the argument of \cite[Chapter 3, Theorem 3.3]{FrWe_book}. We omit the proof.

\end{rmk}

As a direct consequence of \autoref{thm:LDP} we obtain 
a pointwise LDP also for the dissipation measure $\mathcal{D}^\varepsilon$ appearing in the local energy balance of $\rho^\varepsilon$, that is \autoref{cor:LDP_dissipation}.
\begin{proof}[Proof of \autoref{cor:LDP_dissipation}]
Extending the drift $b$ to times $t \in (T,T+1]$ with $b_t \equiv 0$, without loss of generality we can suppose that the solutions $\rho^\varepsilon$ are defined on the time interval $[0,T+1]$, and \autoref{thm:LDP} holds for $\rho^\varepsilon$ on the space $\mathcal{E}_*$ defined in the proof of \autoref{stability_theorem}.
Let $R:=\| \rho_0\|_{L^2_x}^2$. For every $\delta > 0$, let us consider the set
\begin{align*}
T_\delta := \{ \rho \in \mathcal{E}_* : \| \rho_{T} \|_{L^2_x}^2 \leq R - \delta/2 \}.
\end{align*}
By \autoref{lem:beta_rho}, there exists a random measure ${\mathcal{D}}^{\varepsilon,*,T+1}$ taking values in $\mathcal{M}_+([0,T+1] \times \R^d)$ such that 
\begin{align*}
\langle|\rho^\varepsilon_t|^2 , \phi \rangle
&=
\langle|\rho_0^\varepsilon|^2 , \phi \rangle
+ 
\int_0^t  \langle b_s |\rho^\varepsilon_s|^2 , \nabla \phi \rangle ds 
\\
&\quad+  
\sum_{k \in \N}\varepsilon \int_0^t \langle \sigma_k  |\rho^\varepsilon_s|^2 , \nabla \phi \rangle dW^k_s 
+
\varepsilon^2 \int_0^t \langle|\rho^\varepsilon_s|^2 ,\Delta \phi \rangle ds
-
\langle \langle d{\mathcal{D}}^{\varepsilon,*,T+1} , \mathbf{1}_{[0,t]}\phi \rangle \rangle,
\end{align*}
$\PP$-almost surely for almost every $t \in [0,T+1]$ and for every $\phi \in C^\infty_c(\R^d)$, and moreover $\mathcal{D}^\varepsilon$ is the restriction of $\mathcal{D}^{\varepsilon,*,T+1}$ on $[0,T] \times \R^d$, namely ${\mathcal{D}}^{\varepsilon,*,T+1} |_{[0,T] \times \R^d} = \mathcal{D}^\varepsilon$.  
Taking a sequence $\{\phi^n\}_{n \in \N} \subset C^\infty_c(\R^d)$ such that $0 \leq \phi^n \leq 1$ and $\phi^n \uparrow 1$, from the previous line and the fact that ${\mathcal{D}}^{\varepsilon,*,T+1} \geq 0$  we deduce
\begin{align*}
\| \rho_0^\varepsilon \|_{L^2_x}^2 - \| \rho_{t}^\varepsilon \|_{L^2_x}^2
=
\langle \langle d{\mathcal{D}}^{\varepsilon,*,T+1}, \mathbf{1}_{[0,t]} \rangle \rangle,
\end{align*}
$\PP$-almost surely for almost every $t \in [0,T+1]$.
Now take a sequence $t_n \downarrow T$ in this full-measure set.
Since the $L^2_x$ norm is lower-semicontinuous with respect to the convergence in $\mathcal{B}$ and we know that $\rho^\varepsilon \in \mathcal{E}_*$, from the previous identity and the fact that ${\mathcal{D}}^{\varepsilon,*,T+1} |_{[0,T] \times \R^d} = \mathcal{D}^\varepsilon$ we deduce the $\PP$-almost sure global energy inequality at time $t=T$
\begin{align*}
\| \rho_0^\varepsilon \|_{L^2_x}^2 - \| \rho_T^\varepsilon \|_{L^2_x}^2
\geq
\| \rho_0^\varepsilon \|_{L^2_x}^2 - \liminf_{n \to \infty} \| \rho_{t_n}^\varepsilon \|_{L^2_x}^2
=
\limsup_{n \to \infty} \,
\langle \langle d{\mathcal{D}}^{\varepsilon,*,T+1}, \mathbf{1}_{[0,t_n]} \rangle \rangle
\geq
\langle \langle d{\mathcal{D}}^{\varepsilon}, \mathbf{1}_{[0,T]} \rangle \rangle.
\end{align*}
Since $\mathcal{D}^\varepsilon \geq 0$ we have $\langle \langle d{\mathcal{D}}^{\varepsilon}, \mathbf{1}_{[0,T]} \rangle \rangle =  d_{TV}(\mathcal{D}^\varepsilon,0)$.
In addition, since $\rho^\varepsilon_0 \to \rho_0$ in $B$, for every $\delta>0$ there exists $\varepsilon_\delta>0$ such that $\| \rho^\varepsilon_0 \|_{L^2_x}^2 \leq R+\delta/2$ for every $\varepsilon < \varepsilon_\delta$.
In particular, up to $\PP$-negligible sets we have for every $\varepsilon < \varepsilon_\delta$ 
\begin{align*}
\{ \omega \in \O : d_{TV}(\mathcal{D}^\varepsilon,0) \geq \delta \}
\subset 
\{ \omega \in \Omega : \rho^\varepsilon \in T_{\delta} \}
.
\end{align*}
By lower-semicontinuity of the $L^2_x$ norm with respect to the convergence in $\mathcal{B}$, the set $T_\delta$ is closed in $\mathcal{E}_*$. In particular, by \autoref{rmk:LDP} and the pointwise LPD upper bound \eqref{eq:upper_V} for $\rho^\varepsilon$ we get for every $\delta>0$
\begin{align} \label{eq:diss_upper}
\limsup_{\varepsilon \downarrow 0} \varepsilon^2 \log \PP \{ d_{TV}(\mathcal{D}^\varepsilon,0) \geq \delta \}
\leq
\limsup_{\varepsilon \downarrow 0} \varepsilon^2 \log \PP \{ \rho^\varepsilon \in T_\delta \}
\leq
-\inf_{\rho \in T_\delta} I(\rho) = - \infty,
\end{align}
because every renormalized solution of the transport equation with drift $b+g$, $g \in S^N$, satisfies $\| \rho_T\|_{L^2_x} = \| \rho_0 \|_{L^2_x}$.
Since $\PP \{ d_{TV}(\mathcal{D}^\varepsilon,0) \geq \delta \} + \PP \{ d_{TV}(\mathcal{D}^\varepsilon,0) < \delta \} = 1$, \eqref{eq:diss_upper} also implies
\begin{align} \label{eq:diss_lower}
\liminf_{\varepsilon \downarrow 0}
\varepsilon^2 \log \PP \{ d_{TV}(\mathcal{D}^\varepsilon,0) < \delta \}
=
0.
\end{align}
Indeed, suppose instead there is a sequence $\varepsilon_n \downarrow 0$ and $L \in (0,\infty)$ such that $
\PP \{ d_{TV}(\mathcal{D}^{\varepsilon_n},0) < \delta \}
\leq \exp(-\varepsilon_n^{-2}L)$ for every $\varepsilon_n$. 
Since $\limsup_{\varepsilon \downarrow 0} \varepsilon^2 \log \PP \{ d_{TV}(\mathcal{D}^\varepsilon,0) \geq \delta \} = - \infty$ there exists $n_L \in \N$ such that 
\begin{align*}
\PP \{ d_{TV}(\mathcal{D}^{\varepsilon_n},0) \geq \delta \}
\leq \exp(-\varepsilon_n^{-2}L),
\qquad
\forall n \geq n_L,
\end{align*}
implying for every $n \geq n_L$
\begin{align*}
1 
= 
\PP \{ d_{TV}(\mathcal{D}^{\varepsilon_n},0) \geq \delta \} + \PP \{ d_{TV}(\mathcal{D}^{\varepsilon_n},0) < \delta \}
\leq
2  \exp(-\varepsilon_n^{-2}L),
\end{align*}
which is a contradiction since $\varepsilon_n^2 \to \infty$ and $L>0$.

Next we show how \eqref{eq:diss_upper} and \eqref{eq:diss_lower} readily imply a pointwise LDP for $\mathcal{D}^\varepsilon$ with speed $\varepsilon^2$ and rate function $J$.
We will show that \eqref{eq:upper_V} and \eqref{eq:lower_V} hold.
Indeed, let $F$ be a closed set in $\mathcal{M}_+$ and $0 \notin F$ (otherwise $\inf_{\mu \in F}J(\mu) = 0$ and there is nothing to prove). 
Since $F$ is closed, $d_{TV}(0,F) \geq \delta$ for some $\delta>0$ and therefore 
\begin{align*}
\limsup_{\varepsilon \downarrow 0}
\varepsilon^2 \log \PP\{ \mathcal{D}^\varepsilon \in F \} \leq \limsup_{\varepsilon \downarrow 0}
\varepsilon^2 \log \PP\{ d_{TV}(\mathcal{D}^\varepsilon,0) \geq \delta \}=-\infty.
\end{align*}
On the other hand, let $G$ be an open set in $\mathcal{M}_+$ and suppose without loss of generality $0 \in G$. 
Then there exists $\delta>0$ such that the ball in $\mathcal{M}_+$ of radius $\delta$ centered in $0$ is contained in $G$, and therefore
\begin{align*}
\liminf_{\varepsilon \downarrow 0}
\varepsilon^2 \log \PP\{ \mathcal{D}^\varepsilon \in G \}
\geq
\liminf_{\varepsilon \downarrow 0}
\varepsilon^2 \log \PP\{ d_{TV}(\mathcal{D}^\varepsilon,0) < \delta \} = 0.
\end{align*}
\end{proof}

\begin{rmk}
As the rate function $J$ equals $+\infty$ except at $\mu=0$, we can heuristically interpret \autoref{cor:LDP_dissipation} as smallness of $L^2_x$ dissipation for solutions of \eqref{eq:stoch_transport}, up to events of probability $o(\exp(-\varepsilon^{-2}))$.
In our opinions, it would be interesting to understand if a different LDP holds with different speed and non trivial rate function.
\end{rmk}

\subsection{Uniform Large Deviations Principle in $\mathscr{E}$}\label{LDP_mathscr_E}

It only remains to give the:

\begin{proof}[Proof of \autoref{thm:strong_LDP}]
 
First of all, lower-semicontinuity of the map $\rho\rightarrow I_{\rho}(v)$ is valid for each $v\in \mathscr{E}$, by the same argument of the proof of \autoref{thm:LDP}.
Next, for each compact $K \subset B$ and $N \in (0,\infty)$, the set $\Gamma_{K,N}$ is compact in $\mathscr{E}$ by \autoref{compactness_transport_map}.
Moreover, repeating verbatim the argument of the proof of the first part of \cite[Theorem 5]{Bud_Dup}, one can show that the family of maps $I_{\rho_0} : \mathscr{E} \to [0,\infty]$ has compact level sets on compacts. 

Let us finally move to the Large Deviations estimates. 
Let us define the set $\mathscr{K} \subset C_b(\mathscr{E})$ as follows:
\begin{align*}
    \mathscr{K} := \{ h \in C_b(\mathscr{E}) \,:\, h(\rho) := d_\mathscr{E}(\rho,F) \wedge L, \,F \subset \mathscr{E} \mbox{ compact}, \,L > 0\}.
\end{align*} 
Notice that the second part of \autoref{stability_theorem} applies to every function $h \in \mathscr{K}$, since any compact set $F \subset \mathscr{E}$ is a fortiori closed and separable, and one can replace the constant $1$ with $L$ therein without any additional difficulty. 
The family $\mathscr{K}$ is rich enough in terms of Laplace asymptotics, at least when one is interested in Large Deviations estimates. Indeed, by looking at the proof of \cite[Proposition 1.14]{BuDu19}, the LDP uniformly on compacts is implied by the validity of 
\begin{align} \label{eq:Laplace_unif}
 \lim_{\varepsilon \downarrow 0}
 \sup_{\rho_0 \in K}
 \left|
\varepsilon^2 \log \mathbb{E} \left[ \exp(-\varepsilon^{-2}h(\rho^{\varepsilon,\rho_0}))  \right]
+
\inf_{\rho \in \mathscr{E}} \left\{ h(\rho) + I_{\rho_0}(\rho)\right\}
\right|
=
0,
\quad
\forall h \in \mathscr{K}.
\end{align} 
By \cite[Proposition 1.12]{BuDu19}, in order to prove \eqref{eq:Laplace_unif} it suffices to check the pointwise condition
\begin{align} \label{eq:Laplace}
 \lim_{\varepsilon \downarrow 0}
\varepsilon^2 \log \mathbb{E} \left[ \exp(-\varepsilon^{-2}h(\rho^{\varepsilon,\rho_0^\varepsilon}))  \right]
=-
\inf_{\rho \in \mathscr{E}} \left\{ h(\rho) + I_{\rho_0}(\rho)\right\},
\quad
\forall h \in \mathscr{K},
\end{align}
whenever $\rho^\varepsilon_0 \to \rho_0$ in $B$.
To prove the latter, we follow the proof of \cite[Theorem 5]{Bud_Dup}, for which we need the results of \autoref{Sec_Meas_Stab} since the space $\mathscr{E}$ is not Polish.

From now on, let $h \in \mathscr{K}$ be fixed and consider the map $h(\rho^{\varepsilon,\rho_0^\varepsilon}) : (\Omega,\mathcal{F},\PP) \to \R$. 
By \autoref{corollary_integrals}, this map is measurable with respect to the augmented $\sigma$-field generated by $\varepsilon W : (\Omega,\mathcal{F},\PP) \to C_t H$, where $H$ is an abstract Hilbert space hosting the trajectories of the Wiener process $W = \sqrt{\mathcal{Q}}\mathcal{W}$.
By Doob measurability theorem there exists a Borel measurable map $H^\varepsilon : C_t H \to \R$ such that $H^\varepsilon(\varepsilon W) = h(\rho^{\varepsilon,\rho_0^\varepsilon})$, $\PP$-almost surely. 
Moreover, since $h$ is bounded, $H^\varepsilon$ is bounded on a subset of $C_tH$ which has full-measure with respect to the law of $\varepsilon W$. 
Without any loss of generality, we can modify $H^\varepsilon$ in a measurable way so that $H^\varepsilon$ is bounded on the whole $C_t H$, and the $\PP$-almost sure relation $H^\varepsilon(\varepsilon W) = h(\rho^{\varepsilon,\rho_0^\varepsilon})$ continues to hold.

We shall make use of the following fundamental representation formula \cite[Theorem 3]{Bud_Dup}:
\begin{align*}
- \varepsilon^2 \log \mathbb{E}[\exp(-\varepsilon^{-2}H^\varepsilon(\varepsilon W))] 
=
\inf_{g \in \mathcal{P}_2^N}
\mathbb{E} \left[ 
\frac12 \|g \|_{L^2_t \mathcal{H}_0}^2
+
H^\varepsilon \left( \varepsilon W + \int_0^\cdot g_t dt \right)
\right],
\end{align*}
where $N$ is finite and large enough, depending only on $\sup |h|$, and $\mathcal{P}_2^N$ has been defined in the Introduction. 
By \autoref{prop_well_posed} and \cite[Theorem 3.14]{kurtz}, for every $\varepsilon \in (0,1)$ and $g \in \mathcal{P}_2^N \cap \mathcal{P}^N_{prog}$ uniqueness in law holds for $\rho^{\varepsilon,\rho_0^\varepsilon,g}$ as $\mathcal{E}$-valued random variable.
Moreover, by \autoref{corollary_integrals}, the law of the real-valued random variable $h(\rho^{\varepsilon,\rho_0^\varepsilon,g})$ only depends on the law of $\rho^{\varepsilon,\rho_0^\varepsilon,g}$ on $\mathcal{E}$. Therefore, by Girsanov Theorem, for every $\varepsilon \in (0,1)$ and $g \in \mathcal{P}_2^N \cap \mathcal{P}^N_{prog}$ it holds
\begin{align} \label{eq:kurtz}
\mathbb{E} \left[
 H^\varepsilon \left( \varepsilon W + \int_0^\cdot g_t dt \right)
 \right]
=
\mathbb{E} \left[ h (\rho^{\varepsilon,\rho_0^\varepsilon,g}) \right].
\end{align}

By definition of $H^{\varepsilon}$ and the previous two displays, we can rewrite
\begin{align*}
- \varepsilon^2 \log \mathbb{E}[\exp(-\varepsilon^{-2}h(\rho^{\varepsilon,\rho_0^\varepsilon}))] 
=
\inf_{g \in \mathcal{P}_2^N}
\mathbb{E} \left[ 
\frac12 \|g \|_{L^2_t \mathcal{H}_0}^2
+
h(\rho^{\varepsilon,\rho_0^\varepsilon,g})
\right].
\end{align*}
The proof of \eqref{eq:Laplace} comes in two steps. 

\emph{Upper bound.}
For every $\delta>0$ and $\varepsilon \in (0,1)$, we can find $g^\varepsilon \in \mathcal{P}_2^N \cap \mathcal{P}^N_{prog}$ such that
\begin{align*}
 \inf_{g \in \mathcal{P}_2^N}
\mathbb{E} \left[ 
\frac12 \|g \|_{L^2_t \mathcal{H}_0}^2
+
h(\rho^{\varepsilon,\rho_0^\varepsilon,g})
\right]
\leq
\mathbb{E} \left[ 
\frac12 \|g^\varepsilon \|_{L^2_t \mathcal{H}_0}^2
+
h(\rho^{\varepsilon,\rho_0^\varepsilon,g^\varepsilon})
\right] - \delta.
\end{align*}    

Up to the extraction of a (non-relabeled) subsequence, $g^\varepsilon$ converges in law as random variables in $S^N$, towards a stochastic process $g$ defined on a probability space $(\Omega^0,\mathcal{F}^0,\PP^0)$.
By lower-semicontinuity of the $L^2_t \mathcal{H}_0$ norm with respect to weak convergence and previous \autoref{stability_theorem}, we have
\begin{align*}
\liminf_{\varepsilon \downarrow 0}
\mathbb{E} \left[ 
\frac12 \|g^\varepsilon \|_{L^2_t \mathcal{H}_0}^2
+
h (\rho^{\varepsilon,\rho_0^\varepsilon,g^\varepsilon})
\right] 
\geq
\mathbb{E}^0 \left[ 
\frac12 \|g \|_{L^2_t \mathcal{H}_0}^2
+
h (\rho^{\rho_0,g})
\right] 
\geq
\inf_{\rho \in \mathscr{E}}
\left\{ I_{\rho_0}(\rho) + h(\rho) \right\}.
\end{align*}
Since $\delta>0$ is arbitrary this proves the upper bound.

\emph{Lower bound.}
Fix $\delta>0$ and let $\rho^\star \in \mathscr{E}$, $g^\star \in S^N$ be such that $\rho^\star = \rho^{\rho_0,g^\star}$ and
\begin{align*}
 I_{\rho_0}(\rho^\star) + h(\rho^\star) 
\leq
\inf_{\rho \in \mathscr{E}}
\left\{ I_{\rho_0}(\rho) + h(\rho) \right\} + \delta,
\qquad
\frac12\|g^\star \|_{L^2_t \mathcal{H}_0}^2
\leq
I_{\rho_0}(\rho^\star)+\delta.
\end{align*}
Clearly $g^\star \in \mathcal{P}^N_2 \cap \mathcal{P}^N_{prog}$. Then by \autoref{stability_theorem} we have the following chain of inequalities: 
\begin{align*}
\limsup_{\varepsilon \downarrow 0} & \inf_{g \in \mathcal{P}_2^N}
\mathbb{E} \left[ 
\frac12 \|g \|_{L^2_t \mathcal{H}_0}^2
+
h(\rho^{\varepsilon,\rho_0^\varepsilon,g^\varepsilon})
\right]
\\&\leq
\limsup_{\varepsilon \downarrow 0}
\mathbb{E} \left[ 
\frac12 \|g^\star \|_{L^2_t \mathcal{H}_0}^2
+
h(\rho^{\varepsilon,\rho_0^\varepsilon,g^\star})
\right]
\\
&= 
\frac12 \|g^\star \|_{L^2_t \mathcal{H}_0}^2
+\limsup_{\varepsilon \downarrow 0}\mathbb{E} \left[ 
h (\rho^{\varepsilon,\rho_0^{\varepsilon},g^\star})
\right]
\\
&=
\frac12 \|g^\star \|_{L^2_t \mathcal{H}_0}^2
+ 
h (\rho^{\star})
\leq
\inf_{\rho \in \mathscr{E}}
\left\{ I_{\rho_0}(\rho) + h(\rho) \right\} + 2\delta,
\end{align*}
The lower bound descends from arbitrariness of $\delta$.
\end{proof}

\begin{acknowledgements}
    We are grateful to Amarjit Budhiraja for pointing out reference \cite{BuDu19} and to Mario Maurelli for the many stimulating discussions.  GC is supported by SNF Project 212573 FLUTURA – Fluids, Turbulence, Advection and by the SPP 2410 “Hyperbolic Balance Laws in Fluid Mechanics: Complexity, Scales, Randomness (CoScaRa)” funded by the Deutsche Forschungsgemeinschaft (DFG, German Research Foundation) through the project 200021E\_217527 funded by the Swiss National Science Foundation.
	EL and UP have received funding from the European Research Council (ERC) under the European Union’s Horizon 2020 research and innovation programme (grant agreement No. 949981).  
\end{acknowledgements}

\appendix

\section{Proof of \autoref{prop_well_posed}}

\begin{definition} \label{def:weak_sol}
Let $\rho$ be a $\{ \mathcal{F}_t \}_{t \geq 0}$-progressively measurable process of class $L^\infty_\omega L^\infty_t L^2_x$ and with trajectories in $C_t H^{-\sigma}_x$ $\PP$-almost surely for some $\sigma>0$.
We say that $\rho$ is a probabilistically strong, analytically weak solution of \eqref{eq:rho_general} if for every test function $\varphi \in C_c^\infty(\R^d)$, it holds $\PP$-almost surely for every $t \in [0,T]$
\begin{align} \label{eq:notion_solution}
\langle \rho_t , \varphi \rangle
&=
\langle \rho_0 , \varphi \rangle
+
\int_0^t \langle (b_s+g_s) \rho_s , \nabla \varphi  \rangle ds
+
\int_0^t \langle \dvg(b_s)\rho_s , \varphi \rangle ds
\\ \nonumber
\quad&+
 \varepsilon \sum_{k \in \N}
 \int_0^t  \langle \sigma_k \rho_s, \nabla \varphi \rangle dW^k_s
+
(1+\kappa)\varepsilon^2 
\int_0^t \langle \rho_s , \Delta \varphi \rangle ds.
\end{align}
\end{definition}
By standard argument, see for example \cite[Theorem 1.7]{flandoli2023stochastic}, the definition above implies that for each $t\in [0,T]$ and $\varphi\in C^\infty([0,T]\times \R^d)$ it holds
\begin{align*}
\langle \rho_t , \varphi_t \rangle
&=
\langle \rho_0 , \varphi_0 \rangle+\int_0^t \langle\rho_s,\partial_s\varphi_s\rangle ds
+
\int_0^t \langle (b_s+g_s) \rho_s , \nabla \varphi_s  \rangle ds
+
\int_0^t \langle \dvg(b_s)\rho_s , \varphi_s \rangle ds
\\ \nonumber
\quad&+
 \varepsilon \sum_{k \in \N}
 \int_0^t  \langle \sigma_k \rho_s, \nabla \varphi_s \rangle dW^k_s
+
(1+\kappa)\varepsilon^2 
\int_0^t \langle \rho_s , \Delta \varphi_s \rangle ds\quad\mathbb{P}-a.s.    
\end{align*}

Existence and uniqueness in $L^\infty_\omega L^\infty_t L^2_x \cap L^{\infty}_\omega L^2_t H^1_x$ of solutions to \eqref{eq:rho_general} with $\kappa>0$ and smooth noise (obtained e.g. by taking only finitely many non-zero $\sigma_k$'s) is classical \cite[Chapters 3-5]{Flandoli_Book_95}.
Existence of solutions in the general case of rough noise is obtained by approximation and standard compactness arguments, and the bound in $L^2_t H^1_x$ guarantees uniqueness of solutions with $\kappa>0$ by standard Grownwall-type estimates.
In this section we present a proof of \autoref{prop_well_posed}. We focus on the case $\kappa=0$, the other being easier.

\subsection{Pathwise uniqueness when $g=0$} \label{ssec:pathwise_uniqueness}

The proof basically replicates the proof of the energy balance \cite[Theorem 5.1]{GaGrMa24+}, with the only difference that we prove an estimate in the inhomogeneous Sobolev space $H^{1-\alpha-\delta}_x$ and we do not assume that the initial condition $\rho_0$ belongs to a homogeneous Sobolev space of negative regularity.

Let $e_\xi := (2\pi)^{-d/2} e^{-i \xi \cdot x}$ and define $a_t(\xi) := \mathbb{E} [|\hat{\rho}_t(\xi)|^2]$, where the Fourier transform $\hat{\rho}(\xi) = \langle  \rho , e_\xi \rangle$ is a $L^\infty_\omega L^\infty_t L^2_\xi$-valued process in virtue of the assumption $\rho \in L^\infty_\omega L^\infty_t L^2_x$.

Notice that, with our choice of the coefficient in front of $e_\xi$, the Fourier transform of $Q$ satisfies $(2\pi)^{-d/2} \int_{\R^d} \hat{Q}(\eta) d\eta = Q(0) = 2 I_d$ by construction.
By It\=o formula and the same passages of \cite[Lemma 3.1]{GaGrMa24+}, we get for every compactly supported test function $\psi \in C^\infty_c(\R^d)$:
\begin{align} \label{eq:a_psi}
 \int_{\mathbb{R}^d}  &a_t(\xi) \psi(\xi) d\xi 
 - 
 \int_{\mathbb{R}^d} a_0(\xi)\psi(\xi) d\xi 
\\ \nonumber
&= 
\varepsilon^2 (2\pi)^{-d/2} \int_0^t \int_{\mathbb{R}^d}\int_{\mathbb{R}^d}  \frac{|P^\perp_{\xi-\eta} \xi|^2 a_s(\xi)}{\langle \xi - \eta \rangle^{d+2\alpha}} (\psi(\eta) - \psi(\xi)) d\xi d\eta ds
\\ \nonumber
   &\quad
 2\int_0^t \int_{\mathbb{R}^d} 
 \mathbb{E}\left[\mathfrak{Re} \left(\overline{\hat{\rho}_s(\xi)} \langle b_s \cdot \nabla \rho_s , e_\xi \rangle \right)\right] \psi(\xi) d\xi ds.
\end{align}

We are eventually interested in fixing $\psi(\xi) := \langle \xi \rangle^{-2\delta} := (1+|\xi|^2)^{-\delta}$ in \eqref{eq:a_psi}, so that we get an equation for the evolution of the negative Sobolev norm $\int_{\mathbb{R}^d}  a_t(\xi) \psi(\xi) d\xi = \mathbb{E}\| \rho_t \|_{H^{-\delta}_x}^2$.
Since $\langle \xi \rangle^{-2\delta}$ is not compactly supported, we need to argue by approximation. To do that, we follow the same steps of \cite[Proposition 3.2]{GaGrMa24+}. 
Let $\varphi \in C^\infty_c(\R_+)$ be a non-increasing function such that $0 \leq \varphi \leq 1$, $\varphi \equiv1$ on the interval $[0,1]$, and $\varphi \equiv 0$ on the interval $[2,\infty)$. Define $\varphi_n(r):=\varphi(r/n)$ and $\psi_n (\xi) := \varphi_n(|\xi|) \langle \xi \rangle^{-2\delta}$. 
Now we evaluate $\eqref{eq:a_psi}$ with $\psi = \psi_n$ and send $n \to \infty$. 
Recall that $\rho$ has continuous trajectories in $H^{-\delta}_x$ $\PP$-almost surely. In particular, for every $t \in [0,T]$ the left-hand-side of \eqref{eq:a_psi} converges to
\begin{align} \label{eq:conv_1}
\ \int_{\mathbb{R}^d}  a_t(\xi) \psi_n(\xi) d\xi
 -
\int_{\mathbb{R}^d} a_0(\xi)\psi_n(\xi) d\xi
 &\to
 \mathbb{E} \| \rho_t \|_{H^{-\delta}_x}^2
-
 \| \rho_0 \|_{H^{-\delta}_x}^2.
\end{align}

In order to study the convergence of the right-hand-side, we need the following lemmas.
\begin{lem} \label{lem:F_fullspace}
Let $\delta \in (0,1)$ and let us define the functions
\begin{align*}
    F^n(\xi) &:= (2\pi)^{-d/2}\int_{\mathbb{R}^d}  \frac{|P^\perp_{\xi-\eta} \xi|^2}{\langle \xi - \eta \rangle^{d+2\alpha}} 
    \left( \frac{\varphi_n(|\eta|)}{\langle \eta \rangle^{2\delta}} -\frac{\varphi_n(|\xi|)}{\langle \xi \rangle^{2\delta}} \right) d\eta,
    \\
     \dot{F}^n(\xi) &:= (2\pi)^{-d/2}\int_{\mathbb{R}^d}  \frac{|P^\perp_{\xi-\eta} \xi|^2}{\langle \xi - \eta \rangle^{d+2\alpha}} 
     \left( \frac{\varphi_n(|\eta|)}{| \eta |^{2\delta}} -\frac{\varphi_n(|\xi|)}{| \xi |^{2\delta}} \right) d\eta.
\end{align*}
Define similarly $F,\dot{F}$ replacing $\varphi_n$ with the constant $1$ in the previous lines. Then there exists $R\gg 1$ such that we can bound 
    \begin{align*}
     |F(\xi)-\dot{F}(\xi)| + \sup_{n \geq 1}  |F^n(\xi)-\dot{F}^n(\xi)|
        \lesssim
        \langle \xi \rangle^{-2\delta \left( \frac{d+2\alpha}{d+2}\right)} ,
        \quad
        \forall |\xi| \geq R.
    \end{align*}
\end{lem}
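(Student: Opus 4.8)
The goal is to compare the weighted kernel integrals built from $\langle\cdot\rangle^{-2\delta}$ with those built from $|\cdot|^{-2\delta}$, and show the difference decays polynomially for large frequencies. The plan is to write $F(\xi)-\dot F(\xi)$ and $F^n(\xi)-\dot F^n(\xi)$ as a single integral against the difference of the two radial profiles, namely against
$$
 D(\eta) := \frac{\varphi_n(|\eta|)}{\langle\eta\rangle^{2\delta}} - \frac{\varphi_n(|\eta|)}{|\eta|^{2\delta}} - \left(\frac{\varphi_n(|\xi|)}{\langle\xi\rangle^{2\delta}} - \frac{\varphi_n(|\xi|)}{|\xi|^{2\delta}}\right),
$$
(and the analogous expression with $\varphi_n\equiv 1$), so that
$$
 F^n(\xi)-\dot F^n(\xi) = (2\pi)^{-d/2}\int_{\R^d} \frac{|P^\perp_{\xi-\eta}\xi|^2}{\langle\xi-\eta\rangle^{d+2\alpha}} D(\eta)\,d\eta.
$$
First I would record the elementary pointwise bounds: for $|\eta|\ge 1$ one has $0\le |\eta|^{-2\delta}-\langle\eta\rangle^{-2\delta}\lesssim |\eta|^{-2\delta-2}$, since $\langle\eta\rangle^{-2\delta}-|\eta|^{-2\delta}$ is, up to constants, $|\eta|^{-2\delta}((1+|\eta|^{-2})^{-\delta}-1)$ and a first-order Taylor expansion gives the extra $|\eta|^{-2}$; near the origin both functions are bounded (the $|\eta|^{-2\delta}$ singularity is integrable as $2\delta<d$ when $d\ge 2$), so $|\eta|^{-2\delta}-\langle\eta\rangle^{-2\delta}$ is bounded on $|\eta|\le 1$ and in fact $\lesssim |\eta|^{-2\delta}$ there. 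Multiplying by $\varphi_n$ only improves matters and introduces no $n$-dependence in these bounds. This controls the "diagonal'' term $D$ evaluated at $\xi$ by $\lesssim \langle\xi\rangle^{-2\delta-2}$, which is better than the claimed rate, so the real content is the $\eta$-integral of the first piece.

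Next I would split the $\eta$-integral into the two standard regimes used in \cite{GaGrMa24+}: the region $|\eta-\xi|\le |\xi|/2$ (so $|\eta|\sim|\xi|$ and the kernel's numerator is controlled using $|P^\perp_{\xi-\eta}\xi|\le |\xi-\eta|$, giving $|P^\perp_{\xi-\eta}\xi|^2/\langle\xi-\eta\rangle^{d+2\alpha}\lesssim |\xi-\eta|^{2-d-2\alpha}$, integrable near $\eta=\xi$) and the far region $|\eta-\xi|>|\xi|/2$ (where $\langle\xi-\eta\rangle^{-d-2\alpha}\lesssim \langle\xi\rangle^{-d-2\alpha}$ plus a Schur-type tail bound, and the numerator is at most $|\xi|^2$). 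In the near region, using $|\eta|\sim|\xi|$ we get the contribution bounded by $|\xi|^{2}\cdot |\eta|^{-2\delta-2}\big|_{|\eta|\sim|\xi|}$ times $\int_{|\xi-\eta|\le|\xi|/2}|\xi-\eta|^{2-d-2\alpha}d\eta\lesssim |\xi|^{2-2\alpha}$, total $\lesssim |\xi|^{2-2\alpha-2\delta-2+ \text{(something)}}$ — here one must be slightly careful and actually this naive estimate is too lossy, which is precisely why the exponent in the statement is the mollified power $2\delta\frac{d+2\alpha}{d+2}$ rather than $2\delta$. So the correct approach in the near region is to interpolate: bound $|\eta|^{-2\delta-2}-$type quantities by splitting further according to whether $|\xi-\eta|$ is large or small compared to $|\xi|^{\theta}$ for an optimized $\theta$, balancing the gain $|\xi-\eta|^{2-d-2\alpha}$ against the loss from the region where $|\eta|$ can be as small as $|\xi|-|\xi-\eta|$; optimizing $\theta$ is what produces the exponent $\frac{d+2\alpha}{d+2}$. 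The far region is handled more crudely: there $D(\eta)$ is bounded (uniformly in $n$) by a constant plus $\langle\xi\rangle^{-2\delta}$, so the integral against $|\xi|^2\langle\xi-\eta\rangle^{-d-2\alpha}$ over $|\xi-\eta|>|\xi|/2$ is $\lesssim |\xi|^2\langle\xi\rangle^{-2\alpha}$ from the tail, which after pairing with the overall structure still must be reconciled with the claimed rate — here one uses that the far-field piece actually splits into a part where $\varphi_n(|\eta|)=\varphi_n(|\xi|)$ (cancellation) and a part where $|\eta|$ and $|\xi|$ straddle the cutoff scale $n$, handled by the same polynomial decay of $|\eta|^{-2\delta}-\langle\eta\rangle^{-2\delta}$.

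The main obstacle is the bookkeeping in the near region: getting the sharp exponent $2\delta\frac{d+2\alpha}{d+2}$ rather than a lossier power requires the right dyadic decomposition in $|\xi-\eta|$ and a careful optimization, and one must do all of this uniformly in the mollification parameter $n$, which forces one to check that the cutoff $\varphi_n$ never creates spurious contributions (it does not, because $\varphi_n$ is monotone and $0\le\varphi_n\le 1$, so $|\varphi_n(|\eta|)-\varphi_n(|\xi|)|\le 1$ and $\varphi_n$-differences are supported where $|\eta|\wedge|\xi|\gtrsim n$, hence in a region where all the power-type estimates are already very favorable). I would therefore first prove the bound for $F-\dot F$ (the $\varphi_n\equiv 1$ case) in full detail, then observe that every estimate used only relied on $0\le\varphi_n\le 1$ and monotonicity, so the same argument gives the uniform-in-$n$ bound for $F^n-\dot F^n$ with the same constant, completing the proof. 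This mirrors \cite[Proposition 3.2]{GaGrMa24+} closely, the only genuinely new point being that we never invoke a homogeneous negative-Sobolev assumption on $\rho_0$, which is irrelevant at this purely kernel-analytic stage.
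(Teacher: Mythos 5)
Your proposal contains a genuine error at its core: the inequality $|P^\perp_{\xi-\eta}\xi|\le|\xi-\eta|$ is false. Here $P^\perp_{\xi-\eta}=I-\frac{(\xi-\eta)\otimes(\xi-\eta)}{|\xi-\eta|^2}$ projects onto the hyperplane orthogonal to $\xi-\eta$; taking $d=2$, $\xi=(N,0)$, $\eta=(N,1)$ gives $P^\perp_{\xi-\eta}\xi=\xi$ with $|P^\perp_{\xi-\eta}\xi|=N$ while $|\xi-\eta|=1$. The correct and essential fact, which you never invoke, is the cancellation $P^\perp_{\xi-\eta}\xi=P^\perp_{\xi-\eta}\eta$, whence $|P^\perp_{\xi-\eta}\xi|\le|\xi|\wedge|\eta|$. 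Because your ``gain'' $|\xi-\eta|^{2-d-2\alpha}$ in the near region does not exist, the interpolation you sketch there cannot produce the exponent $2\delta\frac{d+2\alpha}{d+2}$. In fact your near region $|\xi-\eta|\le|\xi|/2$ forces $|\eta|\sim|\xi|$, so there the integrand is harmless (it yields $\lesssim\langle\xi\rangle^{-2\delta}$ with the crude bound $|P^\perp_{\xi-\eta}\xi|^2\le|\xi|^2$ and integrability of $\langle\cdot\rangle^{-d-2\alpha}$); the whole difficulty sits in your far region, precisely where $|\eta|$ is small and the profile difference behaves like $|\eta|^{-2\delta}$. Your far-region estimate $\lesssim|\xi|^2\langle\xi\rangle^{-2\alpha}$ grows in $|\xi|$ and does not close, and the remark about $\varphi_n$-cancellations does not repair this, since the problem has nothing to do with the cutoff.

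The paper's proof decomposes in the other variable: after isolating the elementary bound $|G(t)|\le\delta t^{-1-\delta}$ for $G(t)=\frac{t^{\delta}-(1+t)^{\delta}}{t^{\delta}(1+t)^{\delta}}$ (the ``diagonal'' $\xi$-term then contributes $\lesssim\langle\xi\rangle^{-2\delta}$, as you also found), it splits the $\eta$-integral into $\{|\eta|>r\}$ and $\{|\eta|\le r\}$ for a parameter $r\ll\langle\xi\rangle$. On $\{|\eta|>r\}$ the identity $|P^\perp_{\xi-\eta}\xi|=|P^\perp_{\xi-\eta}\eta|$ gives the bound $r^{-2\delta}$; on $\{|\eta|\le r\}$ the same identity bounds the numerator by $|\eta|^2$, producing $\int_{|\eta|\le r}|\eta|^{2-2\delta}d\eta\sim r^{d+2-2\delta}$ against $\langle\xi\rangle^{-d-2\alpha}$; equating $r^{-2\delta}$ with $r^{d+2-2\delta}\langle\xi\rangle^{-d-2\alpha}$ yields $r=\langle\xi\rangle^{\frac{d+2\alpha}{d+2}}$ and hence the stated exponent. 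Without the projection identity and the split in $|\eta|$, your argument cannot be completed as written.
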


\begin{proof}
For notational simplicity, we write $\varphi_\infty \equiv 1$. In this way, $F = F^\infty$ and $\dot{F} = \dot{F}^\infty$. Now we have for every $n \in [1,\infty]$:
\begin{align*}
F^n(\xi)-\dot{F}^n(\xi) 
&= 
(2\pi)^{-d/2}\int_{\mathbb{R}^d}  \frac{|P^\perp_{\xi-\eta} \xi|^2}{\langle \xi - \eta \rangle^{d+2\alpha}} 
\left( \frac{\varphi_n(|\eta|)}{\langle \eta \rangle ^{2\delta}}-\frac{\varphi_n(|\eta|)}{| \eta |^{2\delta}} -\frac{\varphi_n(|\xi|)}{\langle \xi \rangle^{2\delta}} + \frac{\varphi_n(|\xi|)}{| \xi |^{2\delta}} \right) d\eta
\\
&= 
(2\pi)^{-d/2}\int_{\mathbb{R}^d}  \frac{|P^\perp_{\xi-\eta} \xi|^2}{\langle \xi - \eta \rangle^{d+2\alpha}} 
\left( 
\varphi_n(|\eta|)\frac{| \eta |^{2\delta}-(1+| \eta |^2)^{\delta}}{| \eta |^{2\delta}\langle \eta \rangle^{2\delta}}
-
\varphi_n(|\xi|)\frac{| \xi |^{2\delta}-(1+| \xi |^2)^{\delta}}{| \xi |^{2\delta}\langle \xi \rangle ^{2\delta}}
\right) d\eta.    
\end{align*}    
Now we use that $|\varphi_n| \leq 1$ and for the function
\begin{align*}
    G(t) := \frac{t^{{\delta}}-(1+t)^{{\delta}}}{t^{{\delta}}(1+t)^{{\delta}}},
    \quad
    t > 0,
\end{align*}
we have the elementary inequality 
\begin{align*}
|G(t)| \leq \delta t^{-1-\delta},
\quad
\forall t>0.
\end{align*}
For $|\xi| \geq R$ the term with $G(|\xi|^2)$ is then controlled by a constant times $|\xi|^{-2-2\delta}$, thus using that $\alpha>0$ we have uniformly in $n \in [1,\infty]$:
\begin{align*}
\int_{\mathbb{R}^d}  \frac{|P^\perp_{\xi-\eta} \xi|^2}{\langle \xi - \eta \rangle^{d+2\alpha}} 
\varphi_n(|\xi|)\left|    \frac{| \xi |^{2\delta}-(1+| \xi |^2)^{{\delta}}}{| \xi |^{2\delta}\langle \xi \rangle ^{2\delta}} \right|d\eta
\lesssim
\left|    \int_{\mathbb{R}^d}  
\frac{|\xi|^{-2\delta}}{\langle \xi - \eta \rangle^{d+2\alpha}}  d\eta
\right|
\lesssim
\langle \xi \rangle^{-2\delta}.  
\end{align*}
On the other hand, for some $r \ll \langle \xi \rangle$ to be chosen later, we can decompose the integral over the regions $\{|\eta|>r\}$ and $\{ |\eta| \leq r\}$, and bound uniformly in $n \in [1,\infty]$:
\begin{align*}
\int_{\mathbb{R}^d}  \frac{|P^\perp_{\xi-\eta} \xi|^2}{\langle \xi - \eta \rangle^{d+2\alpha}} 
\varphi_n(|\eta|)\left| 
\frac{| \eta |^{2\delta}-(1+| \eta |^2)^{{\delta}}}{| \eta |^{2\delta}\langle \eta \rangle ^{2\delta}}
\right| d\eta
&\leq
    \int_{|\eta| > r}  \frac{|P^\perp_{\xi-\eta} \xi|^2}{\langle \xi - \eta \rangle^{d+2\alpha}} 
\left| 
\frac{| \eta |^{2\delta}-(1+| \eta |^2)^{{\delta}}}{| \eta |^{2\delta}\langle \eta \rangle ^{2\delta}}
\right| d\eta
\\
\quad&+
    \int_{|\eta| \leq  r}  \frac{|P^\perp_{\xi-\eta} \xi|^2}{\langle \xi - \eta \rangle^{d+2\alpha}} 
\left| 
\frac{| \eta |^{2\delta}-(1+| \eta |^2)^{{\delta}}}{| \eta |^{2\delta}\langle \eta \rangle ^{2\delta}}
\right| d\eta.
\end{align*}
Using $|P^\perp_{\xi-\eta} \xi|^2= |P^\perp_{\xi-\eta} \eta|^2$, the former can be controlled by
\begin{align*}
    \int_{|\eta| > r}  \frac{|P^\perp_{\xi-\eta} \eta|^2}{\langle \xi - \eta \rangle^{d+2\alpha}} 
\left| 
\frac{| \eta |^{2\delta}-(1+| \eta |^2)^{{\delta}}}{| \eta |^{2\delta}\langle \eta \rangle ^{2\delta}}
\right| d\eta
\lesssim 
\frac{1}{r^{2\delta}} \int_{|\eta| > r}  \frac{1}{\langle \xi - \eta \rangle^{d+2\alpha}} d\eta \lesssim \frac{1}{r^{2\delta}}.
\end{align*}
The latter can be controlled by
\begin{align*}
\int_{|\eta| \leq r}  \frac{|P^\perp_{\xi-\eta} \eta|^2}{\langle \xi - \eta \rangle^{d+2\alpha}} 
\left| 
\frac{| \eta |^{2\delta}-(1+| \eta |^2)^{{\delta}}}{| \eta |^{2\delta}\langle \eta \rangle ^{2\delta}}
\right| d\eta
&\lesssim
\int_{|\eta| \leq r}  \frac{|P^\perp_{\xi-\eta} \eta|^2}{\langle \xi - \eta \rangle^{d+2\alpha}} 
| \eta |^{-2\delta} d\eta
\\
&\lesssim
\frac{r^{d+2-2\delta}}{\langle \xi \rangle^{d+2\alpha}}.
\end{align*}
Next we optimize over $r \ll \langle \xi \rangle$, namely we have to choose $r$ depending on $\langle \xi \rangle$ such that $r^{-2\delta} = r^{d+2-2\delta}{\langle \xi \rangle^{-d-2\alpha}}$. This gives $r = \langle \xi \rangle^\frac{d+2\alpha}{d+2}$ and we notice that $r \ll \langle \xi \rangle$ is indeed satisfied by this choice since $\alpha<1$. 
Plugging back this choice of $r$ into either of the two bounds above we arrive to
\begin{align*}
\int_{\mathbb{R}^d}  \frac{|P^\perp_{\xi-\eta} \xi|^2}{\langle \xi - \eta \rangle^{d+2\alpha}} 
\left| 
\frac{| \eta |^{2\delta}-(1+| \eta |^2)^{{\delta}}}{| \eta |^{2\delta}\langle \eta \rangle ^{2\delta}}
\right| d\eta
\lesssim
\langle \xi \rangle^{-2\delta\left(\frac{d+2\alpha}{d+2}\right)}.
\end{align*}
As $\langle \xi \rangle^{-2\delta} \ll \langle \xi \rangle^{-2\delta\left(\frac{d+2\alpha}{d+2}\right)}$ for $\langle \xi \rangle \gg 1$, we get the desired result.
\end{proof}

\begin{cor} \label{cor:conv_2}
Let $\delta \in (1-\alpha,1)$, then:
\begin{align*}
\lim_{n \to \infty}
&\varepsilon^2 (2\pi)^{-d/2} \int_0^t \int_{\mathbb{R}^d}\int_{\mathbb{R}^d}  \frac{|P^\perp_{\xi-\eta} \xi|^2 a_s(\xi)}{\langle \xi - \eta \rangle^{d+2\alpha}} (\psi_n(\eta) - \psi_n(\xi)) d\xi d\eta ds
\\
&=
\varepsilon^2 (2\pi)^{-d/2} \int_0^t \int_{\mathbb{R}^d}\int_{\mathbb{R}^d}  \frac{|P^\perp_{\xi-\eta} \xi|^2 a_s(\xi)}{\langle \xi - \eta \rangle^{d+2\alpha}} (\psi(\eta) - \psi(\xi)) d\xi d\eta ds.
\end{align*}
\end{cor}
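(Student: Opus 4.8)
\emph{The plan is the following.} The inner double integral over $(\xi,\eta)$ is, up to the $(2\pi)$-factors, exactly an integral of $a_{s}$ against the functions $F^{n}$ (resp.\ $F$) introduced in \autoref{lem:F_fullspace}; so I would (i) justify this rewriting by Fubini, (ii) prove the uniform bound $\sup_{n}|F^{n}(\xi)|\lesssim 1$ and the pointwise convergence $F^{n}\to F$, and (iii) conclude by dominated convergence, using that $\sup_{s\in[0,T]}\|a_{s}\|_{L^{1}_\xi}=\sup_{s}\mathbb{E}\|\rho_{s}\|_{L^{2}_x}^{2}<\infty$ by Plancherel and the standing assumption $\rho\in L^{\infty}_\omega L^{\infty}_t L^{2}_x$.

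\emph{Step 1 (reduction).} By Tonelli, for each fixed $n$ the triple integral $\int_{0}^{t}\!\int\!\int \frac{|P^\perp_{\xi-\eta}\xi|^{2}a_{s}(\xi)}{\langle\xi-\eta\rangle^{d+2\alpha}}\,|\psi_{n}(\eta)-\psi_{n}(\xi)|\,d\eta\,d\xi\,ds$ equals $(2\pi)^{d/2}\int_{0}^{t}\!\int a_{s}(\xi)\widetilde{F}^{n}(\xi)\,d\xi\,ds$, where $\widetilde{F}^{n}$ is $F^{n}$ with its $\eta$-integrand replaced by the absolute value. Granted the bound of Step 2 this is finite, so Fubini applies and the inner $(\xi,\eta)$-integral in the statement equals $(2\pi)^{d/2}\int a_{s}(\xi)F^{n}(\xi)\,d\xi$, and likewise with $F$, $\psi$ in the limit; here $\psi=\langle\cdot\rangle^{-2\delta}$ and $\psi_{n}=\varphi_{n}(|\cdot|)\langle\cdot\rangle^{-2\delta}$, matching the notation of \autoref{lem:F_fullspace}.

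\emph{Step 2 (uniform bound).} I would estimate $\dot F^{n}$ region by region, using $|P^\perp_{\xi-\eta}\xi|=|P^\perp_{\xi-\eta}\eta|\le\min(|\xi|,|\eta|)$, the bound $\langle\xi-\eta\rangle^{-d-2\alpha}\lesssim 1\wedge|\xi-\eta|^{-d-2\alpha}$, the first-order cancellation along the diagonal (the role of $\alpha<1/2$ being the local integrability of $|\xi-\eta|^{1-d-2\alpha}$ in $\R^{d}$), and — crucially — never splitting $\varphi_{n}(|\eta|)|\eta|^{-2\delta}-\varphi_{n}(|\xi|)|\xi|^{-2\delta}$, so that the angular integration of the projection kernel produces an extra $\langle\xi-\eta\rangle^{-2\alpha}$. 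This should give
\begin{equation*}
\sup_{n}|\dot F^{n}(\xi)|\lesssim |\xi|^{2-2\delta}\mathbf{1}_{\{|\xi|\le 1\}}+\langle\xi\rangle^{2-2\alpha-2\delta},\qquad \xi\ne 0,
\end{equation*}
hence $\sup_{n}|\dot F^{n}(\xi)|\lesssim 1$ since $\delta\in(1-\alpha,1)$ forces $2-2\delta>0$ and $2-2\alpha-2\delta<0$. Combining with \autoref{lem:F_fullspace} (which bounds $\sup_{n}|F^{n}-\dot F^{n}|$ by $\langle\xi\rangle^{-2\delta(d+2\alpha)/(d+2)}\lesssim 1$ for $|\xi|\ge R$) and with the boundedness of $\sup_{n}|F^{n}-\dot F^{n}|$ on $\{|\xi|\le R\}$ (the same computation, now with the smooth weight $\langle\cdot\rangle^{-2\delta}$) yields $\sup_{n}|F^{n}(\xi)|\lesssim 1$ for all $\xi$; the same bounds hold for $\widetilde{F}^{n}$, which justifies the finiteness invoked in Step 1 via $\sup_{s}\|a_{s}\|_{L^{1}_\xi}<\infty$.

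\emph{Step 3 (limit).} For fixed $\xi\ne 0$, $\psi_{n}(\eta)-\psi_{n}(\xi)\to\psi(\eta)-\psi(\xi)$ for every $\eta$, and the $\eta$-integrand is dominated uniformly in $n$ by an integrable function (the estimates of Step 2 with $\xi$ frozen), so $F^{n}(\xi)\to F(\xi)$. Since $|a_{s}(\xi)F^{n}(\xi)|\lesssim a_{s}(\xi)\in L^{1}_\xi$, dominated convergence gives $\int a_{s}(\xi)F^{n}(\xi)\,d\xi\to\int a_{s}(\xi)F(\xi)\,d\xi$ for each $s$; and as $|\int a_{s}(\xi)F^{n}(\xi)\,d\xi|\lesssim\|a_{s}\|_{L^{1}_\xi}$ with the right-hand side bounded uniformly for $s\in[0,t]$, a further dominated convergence in $s$ over $[0,t]$ concludes. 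The only non-routine point is the uniform-in-$n$ control of $\dot F^{n}$ near the origin and at infinity in Step 2: this is exactly the estimate underpinning the energy computations of \cite{GaGrMa24+}, and the hypothesis $\delta>1-\alpha$ enters precisely there.
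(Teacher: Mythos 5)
Your proposal is correct and follows essentially the same route as the paper: reduce to $\int a_s(\xi)F^n(\xi)\,d\xi$, establish $\sup_{n,\xi}|F^n(\xi)|\lesssim 1$ via the decomposition $F^n=\dot F^n+(F^n-\dot F^n)$ together with \autoref{lem:F_fullspace}, and conclude by dominated convergence using $\sup_s\|a_s\|_{L^1_\xi}<\infty$. The only difference is cosmetic: the paper imports the uniform bound on $\dot F^n$ for $|\xi|\ge R$ from \cite[Proposition 3.2, Case 2]{GaGrMa24+} and disposes of the region $|\xi|<R$ with the one-line trivial bound $|\psi_n(\eta)-\psi_n(\xi)|\le\langle\eta\rangle^{-2\delta}+\langle\xi\rangle^{-2\delta}$ applied directly to $F^n$, whereas you sketch the $\dot F^n$ estimate from scratch and route the near-origin region through $\dot F^n$ as well.
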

\begin{proof}
By \autoref{lem:F_fullspace} and the proof of \cite[Proposition 3.2, Case 2]{GaGrMa24+}, the functions $F^n(\xi)$ are uniformly bounded by a finite constant for every $n \geq 1$ and $|\xi| \geq R$. On the other hand, for $|\xi|<R$ the trivial inequality holds
\begin{align*}
|F^n(\xi)| 
&\lesssim 
\int_{\mathbb{R}^d}  \frac{|P^\perp_{\xi-\eta} \xi|^2}{\langle \xi - \eta \rangle^{d+2\alpha}} 
    \left( \frac{1}{\langle \eta \rangle^{2\delta}} +\frac{1}{\langle \xi \rangle^{2\delta}} \right) d\eta
\\
&\lesssim 
R^2 \int_{\mathbb{R}^d}  \frac{1}{\langle \xi - \eta \rangle^{d+2\alpha}\langle \eta \rangle^{2\delta}} 
    d\eta
+
R^{2-2\delta} \int_{\mathbb{R}^d}  \frac{1}{\langle \xi - \eta \rangle^{d+2\alpha}} 
    d\eta
\lesssim 1,
\end{align*}
with implicit constant depending on the parameter $R$ from previous lemma. Since $a(\xi) \in L^\infty_t L^1_x$ we obtain the thesis by Dominated Convergence.
\end{proof}

We also need to control the term that involves the drift $b$.
\begin{lem} \label{lem:b}
There exists $\delta \in (1-\alpha,1)$, $0<\lambda \ll 1$ and an implicit constant depending on $d,\alpha,\delta,\lambda,q$ and the norms $\| b \|_{L^\infty_t W^{1,q}_x}$ and $\| \dvg \, b \|_{L^\infty_t H^{\vartheta}_x}$, such that for every $t \in [0,T]$:
\begin{align} \label{eq:ERe(b+g)}
\sup_{n \geq 1}\int_0^t \int_{\mathbb{R}^d} 
\left| \mathbb{E}\left[\mathfrak{Re} \left( \overline{\hat{\rho}_s(\xi)} \langle b_s \cdot \nabla  \rho_s , e_\xi \rangle \right)\right] \right| \psi_n(\xi) d\xi ds
& \lesssim
\int_0^t \mathbb{E} \| \rho_s \|_{H^{1-\alpha-\delta-\lambda}_x}^2 ds.
\end{align}
\end{lem}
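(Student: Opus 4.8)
My plan is to remove the derivative from $\rho$ by integration by parts, reduce the whole quantity to weighted $L^2_\xi$ pairings, and then close with two Sobolev product estimates; the decisive one is a bound for $b_s\rho_s$ in a negative Sobolev space which uses the hypothesis $q>\frac{d}{2(1-\alpha)}$ in an essential way. Since $b_s\in W^{1,q}_x\embed L^{q^*}_x$ with $\frac1{q^*}=\frac1q-\frac1d$ and $q>\frac{d}{2(1-\alpha)}\geq\frac{2d}{d+2}$ (the last inequality holds for $d\geq2$), both $(\dvg\,b_s)\rho_s$ (using also $\dvg\,b_s\in L^\infty_x$) and $b_s\rho_s$ are well-defined with well-defined Fourier transforms. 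Writing $\langle b_s\cdot\nabla\rho_s,e_\xi\rangle$ in its weak sense $-\langle\rho_s,\dvg(b_s e_\xi)\rangle$ and using $\nabla\overline{e_\xi}=i\xi\,\overline{e_\xi}$, one gets $\langle b_s\cdot\nabla\rho_s,e_\xi\rangle=-\widehat{(\dvg\,b_s)\rho_s}(\xi)-i\,\xi\cdot\widehat{b_s\rho_s}(\xi)$. Then, bounding $|\mathbb{E}[\cdot]|\leq\mathbb{E}|\cdot|$, using $0\leq\psi_n(\xi)\leq\langle\xi\rangle^{-2\delta}$ (so $\sup_{n\geq1}$ costs nothing) and $|\xi|\leq\langle\xi\rangle$, and applying Tonelli, the left-hand side of \eqref{eq:ERe(b+g)} is bounded by
$$\int_0^t\mathbb{E}\int_{\mathbb{R}^d}\Big(\langle\xi\rangle^{-2\delta}|\hat\rho_s(\xi)|\,|\widehat{(\dvg\,b_s)\rho_s}(\xi)|+\langle\xi\rangle^{1-2\delta}|\hat\rho_s(\xi)|\,|\widehat{b_s\rho_s}(\xi)|\Big)d\xi\,ds.$$
I then apply Cauchy--Schwarz in $\xi$ with weight splittings. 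Set $\beta:=1-\alpha-\delta-\lambda$, with $\delta\in(1-\alpha,1)$ and $\lambda>0$ chosen small enough that $\alpha-\delta+\lambda<0$ (possible since $\delta>1-\alpha>\alpha$ for $\alpha<\tfrac12$): for the main term split $\langle\xi\rangle^{1-2\delta}=\langle\xi\rangle^\beta\langle\xi\rangle^{1-2\delta-\beta}$ to obtain $\norm{\rho_s}_{H^\beta_x}\norm{b_s\rho_s}_{H^{\alpha-\delta+\lambda}_x}$, and for the other term split $\langle\xi\rangle^{-2\delta}=\langle\xi\rangle^\beta\langle\xi\rangle^{-2\delta-\beta}$ to obtain $\norm{\rho_s}_{H^\beta_x}\norm{(\dvg\,b_s)\rho_s}_{H^{\alpha-\delta-1+\lambda}_x}$.

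The core is the product estimate $\norm{b_s\rho_s}_{H^{\alpha-\delta+\lambda}_x}\lesssim\norm{b_s}_{W^{1,q}_x}\norm{\rho_s}_{H^{1-\alpha-\delta-\lambda}_x}$. By duality this is equivalent to boundedness of multiplication by $b_s$ from $H^{\delta-\alpha-\lambda}_x$ to $H^{\delta+\alpha+\lambda-1}_x$, and both exponents are positive (using $\delta>\alpha+\lambda$ and $\delta>1-\alpha-\lambda$). Decomposing the product à la Bony and estimating each paraproduct and the resonant term by Bernstein inequalities together with $W^{1,q}_x\embed L^{q^*}_x$, every piece is controlled provided $\tfrac{d}{q}\leq2-2\alpha-2\lambda$, i.e. $q\geq\frac{d}{2(1-\alpha-\lambda)}$; since $q>\frac{d}{2(1-\alpha)}$ strictly, this holds once $\lambda$ is taken small enough. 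This is the only point where the lower bound on $q$ is used, and I expect the exponent bookkeeping and the selection of the right Kato--Ponce/paraproduct statement for merely $W^{1,q}$-coefficients to be the main technical obstacle of the whole lemma.

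For the lower-order term with $\dvg\,b_s$ I would dualize once more: $\norm{(\dvg\,b_s)\rho_s}_{H^{\alpha-\delta-1+\lambda}_x}\leq\norm{\rho_s}_{H^\beta_x}\sup_{\norm{\varphi}_{H^{1-\alpha+\delta-\lambda}_x}\leq1}\norm{(\dvg\,b_s)\varphi}_{H^{\alpha+\delta+\lambda-1}_x}$, so it suffices to bound multiplication by $\dvg\,b_s$ from $H^{1-\alpha+\delta-\lambda}_x$ to $H^{\alpha+\delta+\lambda-1}_x$. Here I use that, whenever the hypotheses are not vacuous, $\tfrac{d}{2(1-\alpha)}<q\leq2$ forces $d<4(1-\alpha)$, hence $d\in\{2,3\}$ and $\tfrac d2<2(1-\alpha)$; therefore, choosing $\delta$ close enough to $1-\alpha$ and $\lambda$ small, the index $1-\alpha+\delta-\lambda$ exceeds $d/2$ and $H^{1-\alpha+\delta-\lambda}_x\embed L^\infty_x$. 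Taking moreover $\lambda<\vartheta$ and $\delta$ so close to $1-\alpha$ that $\alpha+\delta+\lambda-1\leq\vartheta$, the claim follows from the elementary bound $\norm{fg}_{H^\vartheta_x}\lesssim\norm{f}_{L^\infty_x}\norm{g}_{H^\vartheta_x}+\norm{g}_{L^\infty_x}\norm{f}_{H^\vartheta_x}$ with $f=\dvg\,b_s\in L^\infty_x\cap H^\vartheta_x$ and $g=\varphi$.

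Putting these together, both integrands are bounded $\PP$-almost surely by $C\,\norm{\rho_s}^2_{H^{1-\alpha-\delta-\lambda}_x}$ with $C=C(d,\alpha,\delta,\lambda,q,\norm{b}_{L^\infty_tW^{1,q}_x},\norm{\dvg\,b}_{L^\infty_tH^\vartheta_x})$; taking expectations (the constant factors out since $b$ is deterministic) and integrating over $s\in[0,t]$ yields \eqref{eq:ERe(b+g)}. Note that the right-hand side is automatically finite, since $\beta<0$ and $\rho\in L^\infty_\omega L^\infty_tL^2_x$, so there is no degenerate case hidden in the statement.
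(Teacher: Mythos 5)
Your treatment of the main term is essentially the paper's argument: the same integration by parts, the same weighted Cauchy--Schwarz in $\xi$, and a paraproduct product estimate whose content is identical to the paper's key bound $\| b_s \rho_s \|_{H^{\beta'}_x} \lesssim \| b_s \|_{W^{1,q}_x} \| \rho_s \|_{H^{-\theta}_x}$ with $\beta' = 1-\theta-d/q$ (your target index $\alpha-\delta+\lambda$ equals $\beta'$ precisely when $d/q = 2-2\alpha-2\lambda$, and your admissibility condition $q \geq \frac{d}{2(1-\alpha-\lambda)}$ is the same place where $q>\frac{d}{2(1-\alpha)}$ enters in the paper). So for that part there is nothing to add.

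The divergence term is where you genuinely deviate, and where there is a small but real flaw. The paper handles $(\dvg b_s)\rho_s$ by paraproducts using only $\dvg b_s \in H^\vartheta_x \hookrightarrow L^{q_\vartheta}_x$, so its constant depends only on $\| \dvg\, b \|_{L^\infty_t H^\vartheta_x}$ as claimed in the statement. You instead invoke the fractional Leibniz rule $\| fg \|_{H^\vartheta_x} \lesssim \| f \|_{L^\infty_x}\| g\|_{H^\vartheta_x} + \| g\|_{L^\infty_x}\| f\|_{H^\vartheta_x}$ with $f = \dvg b_s$, whose first term produces a factor $\| \dvg\, b_s\|_{L^\infty_x}$. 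Under the standing hypotheses this quantity is only $L^1$ in time, so your final constant is not controlled by the norms listed in the lemma (you would get $\int_0^t \|\dvg b_s\|_{L^\infty_x}\, \mathbb{E}\|\rho_s\|^2_{H^{\beta}_x}\, ds$ rather than a constant times $\int_0^t \mathbb{E}\|\rho_s\|^2_{H^{\beta}_x}\, ds$). The fix is one line and stays within your framework: since in the non-vacuous regime $\vartheta + (1-\alpha+\delta-\lambda) - (\alpha+\delta+\lambda-1) = \vartheta + 2 - 2\alpha - 2\lambda > d/2$ and $0<\alpha+\delta+\lambda-1\leq\vartheta\leq 1-\alpha+\delta-\lambda$, the standard bilinear estimate $H^{\vartheta}_x \times H^{1-\alpha+\delta-\lambda}_x \to H^{\alpha+\delta+\lambda-1}_x$ applies directly and uses only $\| \dvg\, b_s \|_{H^\vartheta_x}$. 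With that replacement your proof is complete.
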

\begin{proof}

First of all, notice that by integration by parts
\begin{align*}
\int_0^t &\int_{\mathbb{R}^d} 
\left| \mathbb{E}\left[\mathfrak{Re} \left( \overline{\hat{\rho}_s(\xi)} \langle b_s \cdot \nabla  \rho_s , e_\xi \rangle \right)\right] \right| \psi_n(\xi) d\xi ds
\\ 
&\lesssim
\int_0^t \int_{\mathbb{R}^d} 
\left| \mathbb{E}\left[\mathfrak{Re} \left( \overline{\hat{\rho}_s(\xi)} \langle b_s  \rho_s , e_\xi \rangle \right)\right] \right| |\xi| \psi(\xi) d\xi ds
\\
&+
\int_0^t \int_{\mathbb{R}^d} 
\left| \mathbb{E}\left[\mathfrak{Re} \left( \overline{\hat{\rho}_s(\xi)} \langle \dvg( b_s) \rho_s , e_\xi \rangle \right)\right] \right| \psi(\xi) d\xi ds.
\end{align*}

We control each term separately.
Since $q \leq 2$, by Sobolev embedding, see for example \cite[Section 2.7]{Triebel83}, 
\begin{align*}
W^{1,q}_x \subset L^{q^*}_x, \quad\,\,
\, 1/q^* &:= 1/q - 1/d, 
\\
W^{1,q}_x \subset H^{\beta}_x, \quad
\, 0 < \beta &:=  1 + d/2 - d/q \leq 1.
\end{align*}
Notice that the condition $\beta>0$ comes from the assumptions $q > \frac{d}{2(1-\alpha)}$ and $\alpha > 0$.
Let us fix $0<\lambda < \theta \ll 1$ sufficiently small, in such a way that $\delta := 1- \alpha+\theta-\lambda \in (1-\alpha,1)$ and $\theta < \beta$.
For this choice of parameters it holds
\begin{align} \label{eq:key_estimate}
\| b_s \rho_s \|_{H^{\beta'}_x}
\lesssim
\| b_s \|_{W^{1,q}_x}
\| \rho_s \|_{H^{-\theta}_x},
\quad
\beta' := -\theta-d/q+1.
\end{align}
Indeed, denoting by $\varodot$, $\varolessthan$, and $\varogreaterthan$ the three paraproducts, according to \cite[Theorems 27.5-27.10]{van2022theory}, see also \cite[Theorem 3.17]{Paraprod_1}:
\begin{align*}
\| b \varodot \rho \|_{B^{\beta-\theta}_{1,1}}
&\lesssim
\| b \|_{H^\beta_x} \| \rho \|_{H^{-\theta}_x},
\\
\| b \varolessthan \rho \|_{B^{-\theta}_{p_*,2}}
&\lesssim
\| b \|_{L^{q*}_x} \| \rho \|_{H^{-\theta}_x},
\quad
1/p_* = 1/q^* + 1/2,
\\
\| b \varogreaterthan \rho \|_{B^{\beta-\theta}_{1,1}}
&\lesssim
\| b \|_{H^\beta_x} \| \rho \|_{H^{-\theta}_x},
\end{align*}
and by Besov embeddings, see for example \cite[Sections 2.3.2-2.7]{Triebel83}:
\begin{align*}
B^{\beta-\theta}_{1,1} &\subset B^{\beta'}_{2,2} = H^{\beta'}_x,
\\
B^{-\theta}_{p_*,2} &\subset B^{\beta'}_{2,2} = H^{\beta'}_x.
\end{align*}
 
Therefore, in this case we have by H\"older inequality 
\begin{align*}
\int_0^t &\int_{\mathbb{R}^d}  \left|
 \mathbb{E}\left[\mathfrak{Re} \left( \overline{\hat{\rho}_s(\xi)} \langle b_s \rho_s , e_\xi \rangle \right)\right] \right| |\xi|\psi(\xi) d\xi ds
\\
&\lesssim
\int_0^t \int_{\mathbb{R}^d} 
 \mathbb{E}\left[ |{\hat{\rho}_s(\xi)}|^2 \right]^{1/2}
 \mathbb{E}\left[ \langle b_s \rho_s , e_\xi \rangle^2  \right]^{1/2} |\xi|\psi(\xi) d\xi ds
 \\
&\lesssim
\int_0^t \int_{\mathbb{R}^d} 
 \mathbb{E}\left[ \langle \xi \rangle^{-2\theta}|{\hat{\rho}_s(\xi)}|^2 \right]^{1/2}
 \mathbb{E}\left[ | \xi |^{2\beta'} \langle b_s \rho_s , e_\xi \rangle ^2 \right]^{1/2} 
 \langle \xi \rangle^{\theta-\beta'}
 |\xi| \psi(\xi) d\xi ds
 \\
&\lesssim
\sup_{\xi \in \R^d} \langle \xi \rangle^{\theta-\beta'-2\delta}  |\xi|    \int_0^t 
 \mathbb{E}\left[ \|{\rho}_s\|_{H^{-\theta}_x}^2 \right]
\|b_s \|_{W^{1,q}_x} 
 ds.
\end{align*}
The term $\|b_s \|_{W^{1,q}_x}$ can be brought outside the integral up to paying a factor $\| b \|_{L^\infty_t W^{1,q}_x} \lesssim 1$.
Next we claim that it is possible to take $\theta$ and $\lambda$ as above, in such a way that:
\begin{align} \label{eq:sup_xi}
\sup_{\xi \in \R^d} \langle \xi \rangle^{\theta-\beta'-2\delta}  |\xi| < \infty.
\end{align}
This requires that
\begin{align*}
0 
&\geq 
1+\theta-\beta'-2\delta
= 
1+ 2\theta + d/q - 1 - 2 + 2\alpha - 2\theta + 2 \lambda
= 
d/q  - 2 + 2\alpha + 2 \lambda,
\end{align*}
which is possible for some $\lambda > 0$ if and only if $q > \frac{d}{2(1-\alpha)}$.

For the term with the divergence, we argue in a similar fashion. The same bound can be obtained if we can control the product $\dvg (b_s) \rho_s$ in $H^{\beta'-1}_x$.
This can be done if we assume $\dvg \, b \in L^\infty_t H^{\vartheta}_x$ for some $\vartheta>0$. Indeed, in this case Sobolev embedding gives $H^{\vartheta}_x \subset L^{q_\vartheta}_x$ for $1/q_\vartheta = 1/2 - \vartheta/d$. Notice that $q_\vartheta>2$. Let $\theta$ as above such that $0<\theta \ll \vartheta$. Then paraproduct estimates yield 
\begin{align*}
\| \dvg(b) \varodot \rho \|_{B^{\vartheta-\theta}_{1,1}}
&\lesssim
\| \dvg(b) \|_{H^{\vartheta}_x} \| \rho \|_{H^{-\theta}_x},
\\
\| \dvg(b) \varolessthan \rho \|_{B^{-\theta}_{p_\vartheta,2}}
&\lesssim
\| \dvg(b) \|_{L^{q_\vartheta}_x} \| \rho \|_{H^{-\theta}_x},
\quad
1/p_\vartheta = 1/q_\vartheta + 1/2,
\\
\| \dvg(b) \varogreaterthan \rho \|_{B^{\vartheta-\theta}_{1,1}}
&\lesssim
\| \dvg(b) \|_{H^{\vartheta}_x} \| \rho \|_{H^{-\theta}_x}.
\end{align*}
and, since $\beta \leq 1$ by the assumption $q \leq 2$, by Besov embeddings:
\begin{align*}
B^{\vartheta-\theta}_{1,1} &\subset B^{\beta'+\vartheta-\beta}_{2,2} = H^{\beta'+\vartheta-\beta}_x \subset H^{\beta'-1}_x,
\\
B^{-\theta}_{p_\vartheta,2} &\subset B^{\beta'+\vartheta-\beta}_{2,2} = H^{\beta'+\vartheta-\beta}_x \subset H^{\beta'-1}_x.
\end{align*}
Putting all together,
\begin{align*}
\| \dvg (b_s) \rho_s \|_{H^{\beta'-1}_x}
\lesssim
\| \dvg \,b_s \|_{H^{\vartheta}_x} \| \rho_s\|_{H^{-\theta}_x}.
\end{align*}
\end{proof}

We are finally ready to prove pathwise uniqueness when $g=0$.
\begin{proof}[Proof of pathwise uniqueness, $g=0$]
By linearity of \eqref{eq:rho_general} it suffices to show that $\rho_t \equiv 0$ for every $t>0$ if $\rho_0 = 0$.

Let $\delta, \lambda$ be given by \autoref{lem:b} and consider \eqref{eq:a_psi} with $\psi_n$ defined as above.
Sending $n \to \infty$ we obtain by \eqref{eq:conv_1}, \autoref{cor:conv_2}, and \autoref{lem:b}:
\begin{align} \label{eq:inequality_1}
\mathbb{E}\| \rho_t \|_{H^{-\delta}_x}^2
-
\| \rho_0 \|_{H^{-\delta}_x}^2
\leq
\varepsilon^2
\int_0^t \int_{\R^d}F(\xi) a_s(\xi) d\xi ds 
+
C_{d,\alpha,\delta,\lambda,q,b} \int_0^t \mathbb{E}\|{\rho}_s\|^2_{H^{1-\alpha-\delta-\lambda}_x} \,ds,
\end{align}
for some finite constant $C_{d,\alpha,\delta,\lambda,q,b}$.
Next, let $R$ be as in \autoref{lem:b} and $C_F$ the implicit constant therein. By \cite[Proposition 4.2]{GaGrMa24+} there exists $K_{d,\alpha,\delta} \in (0,\infty)$ such that
\begin{align*}
\varepsilon^2\int_0^t \int_{\R^d}F(\xi) a_s(\xi) d\xi ds 
&=
\varepsilon^2\int_0^t \int_{|\xi|\leq R}F(\xi) \mathbb{E}|\hat{\rho}_s(\xi)|^2 d\xi ds
+
\varepsilon^2\int_0^t \int_{|\xi|>R} (F(\xi)-\dot{F}(\xi)) \mathbb{E}|\hat{\rho}_s(\xi)|^2 d\xi ds
\\
&\quad+
\varepsilon^2\int_0^t \int_{|\xi| > R} \dot{F}(\xi) \mathbb{E}|\hat{\rho}_s(\xi)|^2 d\xi ds
\\
&\leq
\varepsilon^2 C_{R,d,\alpha,\delta} \int_0^t \mathbb{E}\|{\rho}_s\|^2_{H^{-\delta}_x} ds
+ 
\varepsilon^2 C_F \int_0^t \int_{|\xi|>R} \langle\xi\rangle^{-2\delta\left(\frac{d+2\alpha}{d+2} \right)}
\mathbb{E}|\hat{\rho}_s(\xi)|^2 d\xi ds 
\\
&\quad-
\varepsilon^2
K_{d,\alpha,\delta}
\int_0^t \int_{|\xi|>R} |\xi|^{2-2\alpha-2\delta}
\mathbb{E}|\hat{\rho}_s(\xi)|^2 d\xi ds .
\end{align*}
Choosing $\lambda$ and $\theta=\alpha+\delta+\lambda-1$ in \autoref{lem:b} small enough, we have
\begin{align}
-2\delta \left( \frac{d+2\alpha}{d+2} \right) < 2(1-\alpha-\delta-\lambda).
\end{align}
In particular,
\begin{align*}
\varepsilon^2\int_0^t \int_{\R^d}F(\xi) a_s(\xi) d\xi ds
&\leq
\varepsilon^2 C_{R,d,\alpha,\delta} \int_0^t \mathbb{E}\|{\rho}_s\|^2_{H^{-\delta}_x} ds
+ 
\varepsilon^2 C_F \int_0^t 
\mathbb{E}\|{\rho}_s\|_{H^{1-\alpha-\delta-\lambda}_x}^2 ds 
\\
&\quad-
\varepsilon^2
K_{d,\alpha,\delta}
\int_0^t 
\mathbb{E}\|{\rho}_s\|_{H^{1-\alpha-\delta}_x}^2 ds.
\end{align*}
Plugging the previous line into \eqref{eq:inequality_1}, we obtain
\begin{align*}
\mathbb{E}\| \rho_t \|_{H^{-\delta}_x}^2
-
\| \rho_0 \|_{H^{-\delta}_x}^2
&\leq
C_{d,\alpha,\delta,\lambda,q,b,F} \int_0^t \mathbb{E}\|{\rho}_s\|^2_{H^{1-\alpha-\delta-\lambda}_x} \,ds
+
\varepsilon^2 C_{R,d,\alpha,\delta} \int_0^t \mathbb{E}\|{\rho}_s\|^2_{H^{-\delta}_x} ds
\\
&\quad-
\varepsilon^2
K_{d,\alpha,\delta}
\int_0^t 
\mathbb{E}\|{\rho}_s\|_{H^{1-\alpha-\delta}_x}^2 ds
\\
&\leq
C_{d,\alpha,\delta,\lambda,q,b,F,\varepsilon,R} \int_0^t \mathbb{E}\|{\rho}_s\|^2_{H^{-\delta}_x} ds
-
\varepsilon^2
\frac{K_{d,\alpha,\delta}}{2}
\int_0^t 
\mathbb{E}\|{\rho}_s\|_{H^{1-\alpha-\delta}_x}^2 ds,
\end{align*}
where in the last line we have rewritten the term involving the $H^{1-\alpha-\delta-\lambda}_x$ norm using interpolation between the Sobolev spaces $H_x^{-\delta}$ and $H^{1-\alpha-\delta}_x$, and we have applied Young inequality in order to reabsorb the term with $\mathbb{E}\|{\rho}_s\|_{H^{1-\alpha-\delta}_x}^2$.
Since the map $t \mapsto \mathbb{E}\| \rho_t \|_{H^{-\delta}_x}^2$ is continuous for $\delta>0$, we can rearrange the terms above and apply Gronwall inequality to get:
\begin{align} \label{eq:gronwall}
\mathbb{E}&\| \rho_t \|_{H^{-\delta}_x}^2
+
K
\int_0^t \mathbb{E}\| \rho_s \|_{H^{1-\alpha-\delta}_x}^2 ds
\leq
e^{CT}
\| \rho_0 \|_{H^{-\delta}_x}^2.
\end{align}
We conclude by recalling that $\rho_0 \equiv 0$.
\end{proof}

\subsection{Strong existence when $g=0$} \label{ssec:strong_existence}
We set up a vanishing viscosity scheme. To simplify our proofs we approximate also the initial condition $\rho_0 = \rho_0^\kappa$ in such a way that $\rho_0^\kappa \in C^\infty_c(\R^d)$ and $\rho_0^\kappa \to \rho_0$ in $L^2_x \cap L^p_x$ as $\kappa \downarrow 0$.

Next we are going to show that solutions associated with $\varepsilon,\kappa\in(0,1)$ enjoy uniform-in-$(\varepsilon,\kappa)$ estimates in the spaces $L^\infty_\omega L^\infty_t (L^2_x \cap L^p_x)$ and $L^2_\omega L^2_t H^{1-\alpha-\delta}_x$.

\emph{Estimate in $L^\infty_\omega L^\infty_t (L^2_x \cap L^p_x)$}. 
Let us consider first the case $p \in [2,\infty)$.  
Testing $\rho$ against $p\rho|\rho|^{p-2}$ and using $\dvg g = 0$ and 
\begin{align*}
\langle (b+g) \cdot \nabla \rho , \rho |\rho|^{p-2}\rangle  &=
\langle \dvg((b+g) \rho ) , \rho |\rho|^{p-2}\rangle- \langle \dvg \,b, |\rho|^p \rangle
\\
&=
-(p-1) \langle (b+g) \rho, |\rho|^{p-2} \nabla \rho \rangle - \langle \dvg \,b, |\rho|^p \rangle,
\end{align*}
 we get the following inequality $\PP$-almost surely:
\begin{align*}
\|\rho_t\|_{L^p_x}^p
&\leq
\| \rho_0^\kappa \|_{L^p_x}^p
+
\int_0^t \| \dvg \,b_s\|_{L^\infty_x} \sup_{r \in [0,s]}\| \rho_r \|_{L^p_x}^p ds
.
\end{align*}
Taking the supremum over times $r \in [0,t]$ at the left-hand side above, by Gronwall Lemma we get the $\PP$-almost sure bound
\begin{align} \label{eq:bound_rho_Linfty_Lp}
\sup_{r \in [0,t]}
\| \rho_r \|_{L^p_x}^p
\leq
e^{\| \dvg \,b \|_{L^1_tL^\infty_x}}
\| \rho_0^\kappa\|_{L^p_x}^p.
\end{align}
The case $p \in [1,2)$ can be handled similarly, approximating the function $| \cdot |^p$ with an increasing sequence of smooth functions. Finally, the remaining case $p=\infty$ can be handled using Markov inequality and the bounds just obtained for the $L^p_x$ norms, similarly to what done in the proof of \cite[Proposition 2.7]{DrGaPa25+}.

\emph{Estimate in $L^n_\omega C^\gamma_t H^{-\sigma}_x$}.
Due to the regularity of $\rho$, $b$, and $g$, it immediately follows that for each $\varphi\in H^{\sigma}_x$,\ $\sigma>d/2+1$:
\begin{align*}
    \langle g\cdot\nabla\rho,\varphi\rangle &= \langle g\cdot\nabla\varphi,\rho\rangle  \leq \lVert \rho\rVert_{L^2_x} \lVert g\rVert_{H^{d/2+\alpha}_x}\norm{\varphi}_{H^{\sigma}_x},\\ 
    \langle b\cdot\nabla\rho,\varphi\rangle &= \langle b\cdot\nabla\varphi,\rho\rangle+\langle\operatorname{div}( b)\varphi,\rho\rangle\\ & \leq \lVert \rho\rVert_{L^2_x}(\lVert b\rVert_{W^{1,q}_x}+\lVert \operatorname{div}b\rVert_{H^{\vartheta}_x})\norm{\varphi}_{H^{\sigma}_x}.
\end{align*}
Therefore $(g+b)\cdot\nabla \rho\in L^{\infty}_tH^{-\sigma}_x$ and 
\begin{align}\label{gamma_holder_time}
    \norm{(g+b)\cdot\nabla \rho}_{ L^{\infty}_tH^{-\sigma}_x}\leq \lVert \rho\rVert_{L^{\infty}_t L^2_x}(\lVert g\rVert_{L^2_t H^{d/2+\alpha}_x}+\lVert b\rVert_{L^{\infty}_t W^{1,q}_x}+\lVert \operatorname{div}\,b\rVert_{L^{\infty}_t  H^{\vartheta}_x}).
\end{align}
Moreover, by \cite[Lemma 2.3]{GaGrMa24+}, the stochastic integral
$\int_0^\cdot dW_s\cdot\nabla\rho_s$ takes values in $C_tH^{-1}_x$ and it holds
\begin{align*}
\mathbb{E}\left[\sup_{t\in [0,T]}\norm{\int_0^t dW_s\cdot\nabla\rho_s}_{H^{-1}_x}^2\right]\lesssim \mathbb{E}\left[\int_0^T \norm{\rho_s}_{L^2_x}^2 ds\right]\lesssim 1.
\end{align*}
Easily $\Delta \rho_s\in L^{\infty}_tH^{-\sigma}_x $. As a consequence, by standard density argument, there exists a zero probability set $\mathcal{N}$ such that on its complementary, $\rho$ satisfies \eqref{eq:notion_solution} for each $t\in [0,T],\ \varphi\in H^{\sigma}_x $. Therefore $\rho$ satisfies
\begin{align}\label{weak_formulation_Hsigma}
\rho_t=\rho_0+\int_0^t(g_s+b_s)\cdot\nabla \rho_s ds+\varepsilon\int_0^t dW_s\cdot\nabla\rho_s+(1+\kappa)\varepsilon^2\int_0^t \Delta \rho_s ds,
\end{align}
seen as an equality in $H^{-\sigma}_x$. The latter and the continuity in $H^{-\sigma}_x$ of all the terms appearing in the right hand side imply that $\rho\in C_tH^{-\sigma}_x$. Let us now fix $n\geq 2$ and consider $0\leq s\leq t\leq T$. Equations \eqref{weak_formulation_Hsigma} and H\"older inequality combined with \eqref{gamma_holder_time}, Burkholder-Davis-Gundy inequality, \eqref{eq:bound_rho_Linfty_Lp} and \cite[Lemma 2.3]{GaGrMa24+} imply 
\begin{align}\label{time_difference}
    \mathbb{E}\left[\norm{\rho_t-\rho_s}_{H^{-\sigma}}^n\right]& \lesssim (1+\norm{g}^n_{L^{\infty}_\omega L^2_t H^{d/2+\alpha}}+\lVert b\rVert_{W^{1,q}_x}^n+\lVert \operatorname{div}\,b\rVert_{H^{\vartheta}_x}^n)\norm{\rho_0^\kappa}_{L^2_x}^n\lvert t-s\rvert^{\frac{n}{2}}.
\end{align}
As a consequence, for each $n\geq 2$ and $\gamma<1/2$, $\rho\in L^{n}_{\omega}W^{\gamma,n}_t H^{-\sigma}_x$ uniformly in $\varepsilon \in (0,1)$ and $\kappa \in [0,1)$. By arbitrariness of $n$ and Sobolev embdedding:
\begin{align} \label{eq:bound_rho_holder}
\mathbb{E}[\| \rho\|_{C^\gamma_tH^{-\sigma}_x}^n] 
\lesssim \| \rho_0^\kappa \|_{L^2_x}^n
< \infty ,
\end{align}
uniformly in $\varepsilon\in(0,1)$ and $\kappa \in [0,1)$, or otherwise said $\rho \in L^n_\omega C^\gamma_t H^{-\sigma}_x$.

\emph{Estimate in $L^2_\omega L^2_t H^{1-\alpha-\delta}_x$}.
Let us recall \eqref{eq:a_psi} from \autoref{ssec:pathwise_uniqueness}. In the presence of diffusivity, it takes the form 
\begin{align} \label{eq:a_psi_bis}
 \int_{\mathbb{R}^d}  &a_t(\xi) \psi(\xi) d\xi 
 - 
 \int_{\mathbb{R}^d} a_0(\xi)\psi(\xi) d\xi
 +
 2\kappa \varepsilon^2 \int_0^t \int_{\mathbb{R}^d} |\xi|^2 a_s(\xi) \psi(\xi) d\xi ds 
\\ \nonumber
&= 
\varepsilon^2 (2\pi)^{-d/2} \int_0^t \int_{\mathbb{R}^d}\int_{\mathbb{R}^d}  \frac{|P^\perp_{\xi-\eta} \xi|^2 a_s(\xi)}{\langle \xi - \eta \rangle^{d+2\alpha}} (\psi(\eta) - \psi(\xi)) d\xi d\eta ds
\\ \nonumber
   &\quad
 2\int_0^t \int_{\mathbb{R}^d} 
 \mathbb{E}\left[\mathfrak{Re} \left(\overline{\hat{\rho}_s(\xi)} \langle b_s \cdot \nabla \rho_s , e_\xi \rangle \right)\right] \psi(\xi) d\xi ds,
\end{align}
and is valid for every compactly supported
smooth test function $\psi$.
As before, we want to put $\psi(\xi) = \langle \xi \rangle^{-2\delta}$ in the previous line, but now we allow for every $\delta \in (0,1)$ thanks to the extra regularity of the diffusive approximations. Introducing a sequence $\psi_n \uparrow \psi$ as above, we have the following:
\begin{lem} \label{lem:b_bis}
For every $\delta \in (0,\alpha)$ and $0<\lambda \ll 1$, there exists an implicit constant depending on $d,\alpha,\delta,\lambda,q,r$ and the norm $\| b \|_{L^\infty_t W^{1,q}_x}$ such that
\begin{align} \label{eq:ERe(b+g)_bis}
\sup_{n \geq 1} \int_0^T \int_{\mathbb{R}^d} 
\left| \mathbb{E}  
\left[\mathfrak{Re} \left(\overline{\hat{\rho}_s(\xi)} \langle b_s \cdot \nabla  \rho_s , e_\xi \rangle \right)\right] \right| \psi_n(\xi) d\xi ds
& \lesssim
\| \rho \|_{L^2_\omega L^2_t H^{1-\alpha-\delta-\lambda}_x}^2.
\end{align}
\end{lem}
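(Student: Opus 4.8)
The plan is to mimic the proof of \autoref{lem:b} almost verbatim, the only structural change being the admissible range of the auxiliary parameter $\theta$. As there, the starting point is the integration-by-parts identity
\[
\langle b_s\cdot\nabla\rho_s,e_\xi\rangle
=
-\langle\dvg(b_s)\rho_s,e_\xi\rangle
-i\xi\cdot\langle b_s\rho_s,e_\xi\rangle,
\]
which splits the integrand in \eqref{eq:ERe(b+g)_bis} into a transport contribution controlled by $|\xi|\,|\langle b_s\rho_s,e_\xi\rangle|$ and a divergence contribution controlled by $|\langle\dvg(b_s)\rho_s,e_\xi\rangle|$. In each term I would apply Cauchy--Schwarz in $\omega$, insert the Bessel weight $\langle\xi\rangle^{-\theta}$ on the factor $\hat\rho_s(\xi)$ and the weight $\langle\xi\rangle^{\beta'}$ (respectively $\langle\xi\rangle^{\beta'-1}$) on the product factor, and bound the leftover power of $\langle\xi\rangle$ uniformly in $\xi$, exactly along the lines of the proof of \autoref{lem:b}.

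The parameter choice is now different. I would set $\theta:=\alpha+\delta+\lambda-1$; since $\delta<\alpha<1/2$ and $\lambda$ is small, we have $\theta<0$, so that $H^{-\theta}_x=H^{1-\alpha-\delta-\lambda}_x$ is a space of \emph{positive} regularity, which is precisely the norm of $\rho$ appearing on the right-hand side of \eqref{eq:ERe(b+g)_bis}. Recalling the Sobolev embeddings $W^{1,q}_x\subset L^{q^*}_x$ with $1/q^*=1/q-1/d$ and $W^{1,q}_x\subset H^\beta_x$ with $\beta:=1+d/2-d/q>0$, the constraint $\theta<\beta$ needed for the paraproduct estimates is trivially satisfied since $\theta<0<\beta$. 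The key product estimate
\[
\|b_s\rho_s\|_{H^{\beta'}_x}\lesssim\|b_s\|_{W^{1,q}_x}\|\rho_s\|_{H^{-\theta}_x},
\qquad
\beta':=-\theta-d/q+1,
\]
is then obtained exactly as \eqref{eq:key_estimate}, via the paraproduct decomposition $b\rho=b\varolessthan\rho+b\varodot\rho+b\varogreaterthan\rho$ together with the same Besov embeddings; the only difference is that the resonant piece $b\varodot\rho\in B^{\beta-\theta}_{1,1}$ now has regularity $\beta-\theta>0$, so the estimate is if anything more robust than in \autoref{lem:b}.

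For the divergence term, a completely parallel argument would give $\|\dvg(b_s)\rho_s\|_{H^{\beta'-1}_x}\lesssim\|b_s\|_{W^{1,q}_x}\|\rho_s\|_{H^{-\theta}_x}$; here one uses only $\dvg b\in L^q_x\subset B^0_{q,2}$, which is part of the $W^{1,q}_x$ norm, and the crucial point is that $-\theta>0$ makes the resonant product $\rho\varodot\dvg b$ well-defined (sum of regularities $-\theta+0>0$) without any extra regularity on $\dvg b$. This is why, in contrast with \autoref{lem:b}, the implicit constant does not involve $\|\dvg b\|_{H^\vartheta_x}$. It remains to control the leftover weight $\langle\xi\rangle^{\theta-\beta'}|\xi|\,\psi_n(\xi)$: since $0\le\psi_n(\xi)\le\psi(\xi)=\langle\xi\rangle^{-2\delta}$ uniformly in $n$, this is bounded by $\langle\xi\rangle^{\theta-\beta'-2\delta}|\xi|$, and one checks $\sup_{\xi\in\R^d}\langle\xi\rangle^{\theta-\beta'-2\delta}|\xi|<\infty$ as in \eqref{eq:sup_xi}, the exponent condition reducing to $d/q+2\alpha+2\lambda\le 2$, which is achievable for $\lambda$ small precisely because $q>\frac{d}{2(1-\alpha)}$ (the same $\lambda$ also ensuring $\theta<0$). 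Pulling this finite supremum out of the $\xi$-integral and applying Cauchy--Schwarz in $\xi$, both contributions are bounded by a constant times $\int_0^T\|b_s\|_{W^{1,q}_x}\,\mathbb{E}\|\rho_s\|_{H^{-\theta}_x}^2\,ds\lesssim\|b\|_{L^\infty_tW^{1,q}_x}\|\rho\|_{L^2_\omega L^2_tH^{1-\alpha-\delta-\lambda}_x}^2$, uniformly in $n$, which is the claim \eqref{eq:ERe(b+g)_bis}.

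I do not expect a serious obstacle: the whole argument is a bookkeeping variant of \autoref{lem:b}. The one point requiring care is the sign of $\theta$ — verifying that the paraproduct and Besov estimates used in \autoref{lem:b} remain valid when $\theta<0$ (they do, and in the favourable direction), and exploiting this to handle the divergence term with only the $L^q_x$ information on $\dvg b$ coming from the $W^{1,q}_x$ norm.
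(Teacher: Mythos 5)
Your argument is sound and reaches the right bound, but it is organized differently from the paper's proof. The paper does \emph{not} integrate by parts in \autoref{lem:b_bis}: it treats $b_s\cdot\nabla\rho_s$ as a single product and invokes the bilinear estimate in (homogeneous) Sobolev spaces (Lemma~2.55 of Bahouri--Chemin--Danchin) with $b_s\in \dot H^\beta_x$ and $\rho_s\in \dot H^{1-\alpha-\delta-\lambda}_x$, which is admissible precisely because $\delta<\alpha$ makes the regularity of $\rho$ positive; the divergence of $b$ therefore never appears, which is why $\|\dvg\, b\|_{L^\infty_t H^\vartheta_x}$ is absent from the constant. You instead reuse the skeleton of \autoref{lem:b}: integrate by parts, treat $|\xi|\langle b_s\rho_s,e_\xi\rangle$ and $\langle \dvg(b_s)\rho_s,e_\xi\rangle$ separately via paraproducts with $\theta:=\alpha+\delta+\lambda-1<0$, and observe that the positive regularity of $\rho$ lets the divergence piece be closed using only $\dvg b\in L^q_x\subset B^0_{q,2}$ (valid for $q\le 2$). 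Both routes hinge on the same two ingredients --- the positive regularity $1-\alpha-\delta-\lambda$ of $\rho$ on both sides of the Cauchy--Schwarz step, and the exponent condition equivalent to $q>\tfrac{d}{2(1-\alpha)}$ --- so your exponent bookkeeping matches the paper's. The paper's version is shorter (one product estimate, one term); yours makes explicit why no extra hypothesis on $\dvg b$ is needed.

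One caveat: your parenthetical claim that the paraproduct estimates of \autoref{lem:b} ``remain valid when $\theta<0$'' is not literally true for the term $b\varogreaterthan\rho=T_\rho b$. When $-\theta>0$ the low-frequency blocks $S_{j-1}\rho$ cannot gain the $-\theta$ derivatives on $b$ that the bound $\|b\varogreaterthan\rho\|_{B^{\beta-\theta}_{1,1}}\lesssim\|b\|_{H^\beta_x}\|\rho\|_{H^{-\theta}_x}$ would require; one must instead use $\rho\in B^{-\theta-d/2}_{\infty,2}$ (a negative index for $d\ge 2$) and land directly in $H^{\beta'}_x$. Since $\beta'=\beta-\theta-d/2$, the conclusion $\|b\rho\|_{H^{\beta'}_x}\lesssim\|b\|_{W^{1,q}_x}\|\rho\|_{H^{-\theta}_x}$ still holds --- indeed it is exactly the standard product estimate $H^{\beta}_x\cdot H^{-\theta}_x\subset H^{\beta-\theta-d/2}_x$ (sum of regularities positive, both below $d/2$), i.e.\ the very lemma the paper cites --- so this is an imprecision in the justification rather than a gap in the argument.
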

\begin{proof}
The proof is similar to the proof of \autoref{lem:b}. Since $\delta$ is now small, by \cite[Lemma 2.55]{BaChDa2011} we can make sense of the product $b_s \cdot \nabla \rho_s$ with
\begin{align*}
\| b_s \cdot \nabla \rho_s \|_{\dot{H}^{\beta'}_x} 
&\lesssim
\| b_s \|_{\dot{H}^\beta_x} \| \rho_s \|_{\dot{H}^{1-\alpha-\delta-\lambda}_x}
\lesssim
\| b_s \|_{H^\beta_x} \| \rho_s \|_{H^{1-\alpha-\delta-\lambda}_x},
\\
\beta' &= \beta+1-\alpha-\delta-\lambda-d/2,
\\
\beta &< 1 + d/2 - d/q,
\\
0 &< \beta + 1-\alpha-\delta-\lambda.
\end{align*}
Thus
\begin{align*}
\sup_{n \geq 1}& \int_0^T \int_{\mathbb{R}^d} 
\left| \mathbb{E} 
\left[\mathfrak{Re} \left(\overline{\hat{\rho}_s(\xi)} \langle b_s \cdot \nabla  \rho_s , e_\xi \rangle \right)\right] \right| \psi_n(\xi) d\xi ds
\\
&\lesssim
\int_0^T \int_{\mathbb{R}^d} 
 \mathbb{E}\left[ |{\hat{\rho}_s(\xi)}|^2 \right]^{1/2}
 \mathbb{E}\left[ \langle b_s \cdot \nabla  \rho_s  , e_\xi \rangle^2  \right]^{1/2} \psi(\xi) d\xi ds
 \\
&\lesssim
\int_0^T \int_{\mathbb{R}^d} 
 \mathbb{E}\left[ \langle \xi \rangle^{2-2\alpha-2\delta-2\lambda}|{\hat{\rho}_s(\xi)}|^2 \right]^{1/2}
 \mathbb{E}\left[ | \xi |^{2\beta'} \langle b_s \cdot \nabla  \rho_s , e_\xi \rangle ^2 \right]^{1/2} \langle \xi \rangle^{-1+\alpha+\delta+\lambda}|\xi|^{-\beta'} \psi(\xi) d\xi ds
 \\
&\lesssim
\langle \xi \rangle^{-1+\alpha+\delta+\lambda}|\xi|^{-\beta'} \psi(\xi)  
\int_0^T 
 \mathbb{E}\left[ \|{\rho}_s\|_{H^{1-\alpha-\delta-\lambda}_x}^2 \right]^{1/2}
 \mathbb{E}\left[ \| b_s \cdot \nabla \rho_s \|_{\dot{H}^{\beta'}_x}^2 \right]^{1/2} 
 ds
  \\
&\lesssim
\| b_s \|_{L^\infty_t W^{1,q}_x}
\int_0^T \mathbb{E}\left[ \|{\rho}_s\|_{H^{1-\alpha-\delta-\lambda}_x}^2 \right]
 ds,
\end{align*}
where we have chosen $\beta$ and $0<\lambda \ll 1$ such that
\begin{align*}
\sup_{\xi \in \R^d} \langle \xi \rangle^{-1+\alpha+\delta+\lambda}|\xi|^{-\beta'} \psi(\xi)  < \infty.
\end{align*}
We point out that such $\beta,\lambda$ can always be found since $q>\frac{d}{2(1-\alpha)}$. Indeed,
\begin{align*}
 -1 + \alpha - \delta +\lambda - \beta' \leq 0
 &\iff
 \beta  \geq d/2 -2+ 2\alpha+2\lambda,
 \\
 - \beta'  \geq 0 &\iff \beta \leq d/2 - 1+\alpha+\delta+\lambda,
\end{align*}
which can be satisfied for some choice of $\lambda$ and $\beta$ because
\begin{align*}
d/2 - 2 + 2\alpha + 2\lambda &< 1+ d/2 - d/q, \quad
\mbox{ since } q>\frac{d}{2(1-\alpha)} \mbox{ and } \lambda \ll 1,
\\
d/2 - 1+\alpha+\delta+\lambda &>  - 1+\alpha+\delta+\lambda.
\end{align*}
\end{proof}
Next, we evaluate \eqref{eq:a_psi_bis} with $\psi = \psi_n$ and take the limit $n \to \infty$. Arguing as in \cite[Proposition 3.3]{GaGrMa24+} and taking advantage of the extra regularity of solutions coming from positive diffusivity and $\rho_0^\kappa \in C^\infty_c(\R^d)$, we obtain the inequality (neglecting positive terms on the left-hand-side)
\begin{align*}
0
&\leq
\| \rho_0^\kappa \|_{H^{-\delta}_x}^2
+
\varepsilon^2 \int_0^T \int_{\R^d} F(\xi) a_s(\xi) d\xi ds
+
C_{d,\alpha,\delta,\lambda,q,b}
\int_0^T \mathbb{E} \|\rho_s \|_{H^{1-\alpha-\delta-\lambda}_x}^2 ds
\\
&\leq
\| \rho_0^\kappa \|_{H^{-\delta}_x}^2
+
C_{d,\alpha,\delta,\lambda,q,b,F,\varepsilon,R,T} \| \rho\|_{L^\infty_\omega L^\infty_t L^2_x}
-
\varepsilon^2 \frac{K_{d,\alpha,\delta}}{2} \int_0^T \mathbb{E} \|\rho_s \|_{H^{1-\alpha-\delta}_x}^2 ds.
\end{align*}
We conclude recalling the estimate \eqref{eq:bound_rho_Linfty_Lp} on $\rho$, i.e.
\begin{align} \label{eq:bound_rho_sobolev}
\varepsilon^2\| \rho \|_{L^2_\omega L^2_t H^{1-\alpha-\delta}_x} 
 \lesssim \| \rho_0^\kappa\|_{L^2_x}^2
 < \infty.
\end{align}
Notice that \eqref{eq:bound_rho_sobolev} is uniform in $\kappa \in (0,1)$ but degenerates as $\varepsilon \to 0$.

\begin{proof}[Proof of strong existence, $g=0$]
The case $p=1$ is slightly more complicated and must be treated separately, so let $p \neq 1$ for the time being.

Let us recall the estimates \eqref{eq:bound_rho_Linfty_Lp} and \eqref{eq:bound_rho_sobolev} proved above.
For fixed $\varepsilon \in (0,1)$, both estimates are uniform in $\kappa$ since we have assumed $\rho_0^\kappa \to \rho_0$ in $L^2_x \cap L^p_x$ as $\kappa \downarrow 0$.
With these estimates in hand, for every fixed $\varepsilon \in (0,1)$ we can find a subsequence $\kappa_n \downarrow 0$ such that the unique solution to \eqref{eq:rho_general} associated with $\kappa=\kappa_n$ converges (e.g. weakly-$\ast$ in $L^\infty_\omega L^\infty_t (L^2_x \cap L^p_x)$ and weakly in $L^2_\omega L^2_t H^{1-\alpha-\delta}_x$) towards a process $\tilde{\rho}$ satisfying the same bounds.
Furthermore, the convergence above allows to take the limit in \eqref{eq:notion_solution} when $\kappa_n \downarrow 0$.
Up to taking a modification $\rho$ of the process $\tilde{\rho}$, one can assume that $\rho$ is a weakly continuous, $\{\mathcal{F}_t\}_{t \geq 0}$-progressively measurable process satisfying \eqref{eq:notion_solution} with $\kappa=0$, of class $L^\infty_\omega L^\infty_t (L^2_x \cap L^p_x)$. Namely, strong existence of solutions holds. By \eqref{second_compact_embedding}, $\rho$ has weakly continuous trajectories in $L^p_x$ $\PP$-almost surely.

When $p = 1$, by interpolation we have $\rho_0 \in L^p_x$ for every $p \in (1,2]$. Therefore, one can repeat the same argument as above to obtain uniform bounds for $\rho$ in $L^\infty_\omega L^\infty_t L^{1+1/n}_x$ for every $n \in \N$. We conclude by Fatou's Lemma that $\|\rho\|_{L^\infty_\omega L^\infty_t L^1_x} \leq \liminf_{n \to \infty} \|\rho\|_{L^\infty_\omega L^\infty_t L^{1+1/n}_x}^{1+1/n} < \infty$. 

It only remains to show the H\"older regularity the trajectories in $H^{-\sigma}$, but this immediately descends from \eqref{eq:bound_rho_holder}, completing the proof.
\end{proof}

\subsection{Proof of \autoref{prop_well_posed}}
\begin{proof}
Let us consider a filtered probability space $(\Omega,\mathcal{F},\{\mathcal{F}_t\}_{t\geq 0},\mathbb{P})$, $W$ being a Wiener process with covariance operator $\mathcal{Q}$ adapted to $\{\mathcal{F}_t\}_{t \geq 0}$ and $g$ as in the statement. Let us denote by \begin{align*}
    \gamma_g:=\exp{\left(-\frac{1}{\sqrt{\varepsilon}}\int_0^T \langle g_s,dW_s\rangle_{\mathcal{H}_0}-\frac{1}{2\varepsilon}\int_0^T \norm{g_s}_{\mathcal{H}_0}^2 ds\right)}.
\end{align*}
By Girsanov theorem, see for example \cite[Theorem 10.14]{DaPZab}, there exists a probability measure $\mathbb{P}_g$ on $(\O,\mathcal{F},\PP)$ such that 
\begin{align*}
    \frac{d \mathbb{P}_g}{d\mathbb{P}}=\gamma_g
\end{align*}
and $\hat{W}_t:=W_t+\frac{1}{\sqrt{\epsilon}}\int_0^t g_s ds $ is a Wiener process with covariance operator $\mathcal{Q}$ on the auxiliary filtered probability space $(\Omega,\mathcal{F},\{\mathcal{F}_t\}_{t\geq 0},\mathbb{P}_g)$. 
Moreover, the two probability measures $\mathbb{P}$ and $\mathbb{P}_g$ are equivalent. 
Lastly, for each choice of $r<+\infty$, we have $\gamma_g\in L^r_{\omega}$ with respect to the probability $\mathbb{P}$, and $\gamma_g^{-1}\in L^r_{\omega}$ with respect to the probability $\mathbb{P}_g$. By our previous result in case of $g=0$, on the auxiliary probability space $(\Omega,\mathcal{F},\{\mathcal{F}_t\}_{t\geq 0},\mathbb{P}_g)$ there exists a unique $\rho\in L^{\infty}_{\omega}L^{\infty}_tL^2_x \cap L^{2}_{\omega}L^{2}_tH^{1-\alpha-\delta}_x  \cap L^n_\omega C^\gamma_t H^{-\sigma}_x $, with trajectories in $C_tH^{-s}_x$ $\PP_g$-almost surely, solving \eqref{eq:stoch_trans_Ito} with initial condition $\rho_0$ and noise $\hat{W}.$ Therefore, in our original probability space, $\rho$ solves \eqref{eq:rho_general} with initial condition $\rho_0$, advecting velocity field $b+g$ and Wiener process ${W}.$ 
Moreover, due to the integrability properties of $\gamma_g^{-1}$ with respect to $\mathbb{P}_g$, it holds 
\begin{align*}
\rho\in L^{\infty}_{\omega}L^{\infty}_tL^2_x\cap L^{\frac{2(1-\alpha-2\delta)}{1-\alpha-\delta}}_{\omega}L^{2}_tH^{1-\alpha-\delta}_x 
\cap L^{n/2}_\omega C^\gamma_t H^{-\sigma}_x,
\quad
\forall \delta<\frac{1}{3}\wedge\alpha,
\end{align*}
with respect to the original probability measure $\mathbb{P}.$ 
By interpolation $\rho\in L^{\infty}_{\omega}L^{\infty}_tL^2_x\cap L^{2}_{\omega}L^{2}_tH^{1-\alpha-2\delta}_x \cap L^{n/2}_\omega C^\gamma_t H^{-\sigma}_x$  for each $\delta<\frac{1}{3}\wedge \alpha$ and $n \geq 2$. Due to the arbitrariness of $\delta$ and $n$, we have strong existence of solutions of \eqref{eq:rho_general} with the regularity stated in \autoref{prop_well_posed}. Concerning uniqueness, we argue similarly. By linearity it is enough to show uniqueness assuming $\rho_0=0$. Let $\rho \in L^{\infty}_{\omega}L^{\infty}_t L^{2}_x$ with trajectories in $C_t H^{-s}_x$ $\PP$-almost surely solve \eqref{eq:rho_general} on $(\Omega,\mathcal{F},\{\mathcal{F}_t\}_{t\geq 0},\mathbb{P})$ with null initial condition, advecting velocity field $b+g$ and Wiener process ${W}$. Therefore $\rho$ solves \eqref{eq:stoch_trans_Ito} on the auxiliary probability space $(\Omega,\mathcal{F},\{\mathcal{F}_t\}_{t\geq 0},\mathbb{P}_g)$ and has regularity $L^{\infty}_{\omega}L^{\infty}_t L^{2}_x$ and trajectories in $C_t H^{-s}_x$ $\PP_g$-almost surely. By the uniqueness result for the case $g=0$ it follows that $\rho\equiv 0$, $\mathbb{P}_g$-almost surely. Since $\PP_g$ and $\PP$ are equivalent, $\rho\equiv 0$ almost surely with respect to $\mathbb{P}$ and the result follows.
\end{proof}

\section{Dissipation measure}
\begin{lem} \label{lem:beta_rho}
Under the assumptions of \autoref{prop_well_posed}, assume in addition $\dvg\, b = 0$. Then for every $\rho_0 \in L^2_x$ and $\varepsilon \in (0,1)$ there exists a non-negative random measure $\mathcal{D}^\varepsilon : \Omega \to \mathcal{M}_+([0,T] \times \R^d)$ such that the unique solution of \eqref{eq:rho_general} with $\kappa=0$ satisfies the local energy balance
\begin{align} \label{eq:local_balance_appendix}
\langle|\rho^\varepsilon_t|^2 , \phi_t \rangle
&=
\langle|\rho_0|^2 , \phi_0 \rangle
+ 
\int_0^t  \langle(b_s+g_s) |\rho^\varepsilon_s|^2 , \nabla \phi_s \rangle ds 
+ 
\int_0^t  \langle |\rho^\varepsilon_s|^2 ,\partial_s \phi_s \rangle ds
\\
&\quad+  \nonumber
\sum_{k \in \N}\varepsilon \int_0^t \langle \sigma_k  |\rho^\varepsilon_s|^2 , \nabla \phi_s \rangle dW^k_s 
+
\varepsilon^2 \int_0^t \langle|\rho^\varepsilon_s|^2 ,\Delta \phi_s \rangle ds
-
\langle \langle d\mathcal{D}^{\varepsilon} ,\mathbf{1}_{[0,t]}\phi \rangle \rangle,
\end{align}
$\PP$-almost surely for almost every $t \in [0,T]$ and $\phi \in C^\infty_c([0,T] \times \R^d)$.
Moreover, one can choose $\mathcal{D}^\varepsilon$ is a unique way such that the following property holds: if $b^*$, $g^*$ satisfy the assumptions of \autoref{prop_well_posed} on the time interval $[0,T+1]$, with $b^*_t = b_t$, $g^*_t = g_t$ for almost every $t \in [0,T]$, and $\mathcal{D}^{\varepsilon,*} : \O \to \mathcal{M}_+([0,T+1]\times \R^d)$ is such that \eqref{eq:local_balance_appendix} holds $\PP$-almost surely for almost every $t \in [0,T+1]$ and $\phi \in C^\infty_c([0,T+1] \times \R^d)$, then it holds $\mathcal{D}^{\varepsilon,*}|_{[0,T] \times \R^d} = \mathcal{D}^\varepsilon$ $\PP$-almost surely.
\end{lem}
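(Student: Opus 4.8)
The plan is to carry out the mollification argument of \cite{DrGaPa25+}, which establishes this in the case $b=0$, paying attention to the only genuinely new term --- the DiPerna--Lions commutator generated by the (divergence-free) drift $b+g$ --- whose vanishing in the limit is granted by the Sobolev regularity $\rho^\varepsilon\in L^2_tH^{1-\alpha-\delta}_x$ from \autoref{prop_well_posed}.

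First I would mollify in space, $\rho^\varepsilon_\eta:=\rho^\varepsilon\ast\theta_\eta$ for a standard mollifier, mollify equation \eqref{eq:rho_general} with $\kappa=0$, and apply It\^o's formula to the smooth function $|\rho^\varepsilon_\eta|^2$; testing against $\phi\in C^\infty_c([0,T]\times\R^d)$ and using $\dvg(b+g)=0$, $\dvg\sigma_k=0$, $\sum_k\sigma_k^i\sigma_k^j=Q^{ij}(0)=2\delta^{ij}$ and $2\rho^\varepsilon_\eta\Delta\rho^\varepsilon_\eta=\Delta|\rho^\varepsilon_\eta|^2-2|\nabla\rho^\varepsilon_\eta|^2$ yields \eqref{eq:local_balance_appendix} with $\rho^\varepsilon$ replaced by $\rho^\varepsilon_\eta$ and with defect
\begin{align*}
\langle\langle d\mathcal{D}^\varepsilon_\eta,\mathbf{1}_{[0,t]}\phi\rangle\rangle
=
2\int_0^t\langle\rho^\varepsilon_{\eta,s}R^\eta_s,\phi_s\rangle\,ds
+
2\varepsilon\sum_{k}\int_0^t\langle\rho^\varepsilon_{\eta,s}r^k_{\eta,s},\phi_s\rangle\,dW^k_s
+
\varepsilon^2\int_0^t\Big\langle 2|\nabla\rho^\varepsilon_{\eta,s}|^2-\sum_k|m^k_{\eta,s}|^2,\phi_s\Big\rangle ds,
\end{align*}
where $m^k_\eta:=(\sigma_k\cdot\nabla\rho^\varepsilon)\ast\theta_\eta$, $r^k_\eta:=m^k_\eta-\sigma_k\cdot\nabla\rho^\varepsilon_\eta$ and $R^\eta:=((b+g)\cdot\nabla\rho^\varepsilon)\ast\theta_\eta-(b+g)\cdot\nabla\rho^\varepsilon_\eta$ are the mollification commutators. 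Using $\dvg\sigma_k=0$, the last integrand equals $\sum_{ij}\int\int\bigl(Q^{ij}(0)-Q^{ij}(y-y')\bigr)\rho^\varepsilon(y)\rho^\varepsilon(y')(\partial_i\theta_\eta)(\cdot-y)(\partial_j\theta_\eta)(\cdot-y')\,dy\,dy'$, so it is expressed through the structure function $Q(0)-Q(\cdot)$ of the Kraichnan noise.

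Then I would let $\eta\to 0$. The terms carrying $|\rho^\varepsilon_\eta|^2$ converge to those with $|\rho^\varepsilon|^2$ using $\rho^\varepsilon_\eta\to\rho^\varepsilon$ in $L^\infty_tL^2_x$ and in $L^2_tL^2_{\mathrm{loc}}$ (from the $L^\infty_\omega L^\infty_tL^2_x$ bound and a.e.\ convergence) together with It\^o's isometry for the stochastic integral. In the defect: $(i)$ the drift commutator $2\rho^\varepsilon_\eta R^\eta\to 0$ in $L^1([0,T]\times K)$ and in expectation, by the DiPerna--Lions commutator lemma for the divergence-free field $b+g\in L^\infty_tW^{1,q}_x+L^2_tH^{d/2+\alpha}_x$ against $\rho^\varepsilon$, which belongs to $L^2_tL^r_x$ for some $r$ above the H\"older conjugate of $q$ precisely because $q>\tfrac{d}{2(1-\alpha)}$ (this is the only point where $b$ enters, and the one absent in \cite{DrGaPa25+}); $(ii)$ the noise commutator martingale vanishes uniformly on $[0,T]$ in probability, its quadratic variation being estimated via $\sum_k r^k_\eta(x)r^k_\eta(x')$, where the divergence-free structure of $Q$ (the Leray projector $I_d-\xi\otimes\xi/|\xi|^2$) removes the most singular contribution, leaving a bound of order $\eta^{c}$, $c>0$, after using once more $\rho^\varepsilon\in L^2_tH^{1-\alpha-\delta}_x$; $(iii)$ the remaining ``dissipative'' term is bounded uniformly: testing the mollified balance against a cutoff sequence $\uparrow 1$ and using $(i)$--$(ii)$ one gets $\varepsilon^2\int_0^t\langle 2|\nabla\rho^\varepsilon_{\eta}|^2-\sum_k|m^k_\eta|^2,1\rangle\,ds=\|\rho^\varepsilon_{\eta,0}\|_{L^2_x}^2-\|\rho^\varepsilon_{\eta,t}\|_{L^2_x}^2\le\|\rho_0\|_{L^2_x}^2$ for every $\eta,t$. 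Since all the other terms converge, for each $\phi$ and a.e.\ $t$ the defect $\langle\langle d\mathcal{D}^\varepsilon_\eta,\mathbf{1}_{[0,t]}\phi\rangle\rangle$ converges to a limit $\langle\langle d\mathcal{D}^\varepsilon,\mathbf{1}_{[0,t]}\phi\rangle\rangle$, which provides \eqref{eq:local_balance_appendix}. To see that $\mathcal{D}^\varepsilon$ is a non-negative measure, for $\phi\ge 0$ one checks $\limsup_{\eta\to0}\varepsilon^2\int_0^t\langle\sum_k|m^k_\eta|^2-2|\nabla\rho^\varepsilon_{\eta}|^2,\phi_s\rangle\,ds\le 0$; after integration in $x$ this is a positivity property of the Kraichnan covariance, transparent via the L\'evy--Khintchine representation $Q^{ij}(0)-Q^{ij}(z)=c_Q\int(1-\cos(z\cdot\xi))(\delta^{ij}-\xi^i\xi^j/|\xi|^2)|\xi|^{-d-2\alpha}\,d\xi$, which realizes the limiting defect as a non-negative quadratic form --- exactly the computation in \cite{DrGaPa25+}, unaffected by the drift. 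Hence $\phi\mapsto\langle\langle d\mathcal{D}^\varepsilon,\mathbf{1}_{[0,t]}\phi\rangle\rangle$ is linear, non-negative on $\{\phi\ge 0\}$ and non-decreasing in $t$, so by Riesz representation it is integration against a non-negative Radon measure $\mathcal{D}^\varepsilon$ on $[0,T]\times\R^d$ of total mass $\le\|\rho_0\|_{L^2_x}^2$.

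Finally, \eqref{eq:local_balance_appendix} determines $\mathcal{D}^\varepsilon$ on $[0,T)\times\R^d$ (testing against $\phi\in C^\infty_c(\R^d)$ at a dense set of times), and the construction above, together with the global balance at $t=T$, fixes it on $\{T\}\times\R^d$ as well; with this canonical choice the restriction property follows because, by pathwise uniqueness in \autoref{prop_well_posed}, any extended solution $\rho^{\varepsilon,\ast}$ on $[0,T+1]$ restricted to $[0,T]$ agrees $\PP$-a.s.\ with $\rho^\varepsilon$, so that $\mathcal{D}^{\varepsilon,\ast}|_{[0,T]\times\R^d}$ and $\mathcal{D}^\varepsilon$ both satisfy \eqref{eq:local_balance_appendix} on $[0,T]$ and therefore coincide. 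The hard part, exactly as in \cite{DrGaPa25+}, is step $(iii)$ together with the positivity: both $\varepsilon^2\sum_k|m^k_\eta|^2$ and $2\varepsilon^2|\nabla\rho^\varepsilon_\eta|^2$ diverge as $\eta\to0$ (since $\rho^\varepsilon\notin H^1_x$ and $\sum_k|\nabla\sigma_k|^2=+\infty$ for $\alpha<1$), and only their difference is controlled, through the cancellation of the $2\delta^{ij}$-part of $Q$ at the origin combined with the tensor structure of the structure function and the $H^{1-\alpha-\delta}_x$-regularity of $\rho^\varepsilon$; the additional difficulty here, with respect to the driftless case, is to justify the commutator of step $(i)$ at the borderline $q\le 2$, which is precisely where the Sobolev gain of \autoref{prop_well_posed} is indispensable.
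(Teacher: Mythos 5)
Your route is genuinely different from the paper's. You work directly at $\kappa=0$ and run the full mollification/commutator analysis of \cite{DrGaPa25+} (Itô formula for $|\rho^\varepsilon_\eta|^2$, the three commutators $R^\eta$, $r^k_\eta$ and the dissipative difference $2|\nabla\rho^\varepsilon_\eta|^2-\sum_k|m^k_\eta|^2$), with the drift commutator killed by DiPerna--Lions plus the $H^{1-\alpha-\delta}_x$ gain. The paper instead never touches the $\kappa=0$ equation directly: it invokes \cite[Section 2.4]{DrGaPa25+} only for the viscous approximations $\rho^{\varepsilon,\kappa}$, $\kappa>0$, where $\rho^{\varepsilon,\kappa}\in L^2_tH^1_x$ and the local balance holds with the explicit dissipation $2\varepsilon^2\kappa|\nabla\rho^{\varepsilon,\kappa}|^2$; it then obtains $\mathcal{D}^\varepsilon$ as a weak-$*$ limit of these non-negative measures via tightness and Jakubowski--Skorokhod, transfers the resulting two-sided bound on the defect back to the original space, and applies Riesz there. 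The paper's route buys two things for free that you have to work for: non-negativity of $\mathcal{D}^\varepsilon$ (a limit of non-negative measures, versus your Lévy--Khintchine positivity computation, which is the delicate core of \cite{DrGaPa25+}), and avoidance of the borderline cancellation between the two divergent terms $\varepsilon^2\sum_k|m^k_\eta|^2$ and $2\varepsilon^2|\nabla\rho^\varepsilon_\eta|^2$ at $\kappa=0$. Your route, conversely, is self-contained in $\kappa$ and makes the role of the Kraichnan structure function completely explicit. One bookkeeping point in your step $(i)$: for the term $\rho^\varepsilon_\eta R^\eta\to0$ in $L^1_{loc}$ you need slightly more than ``$r$ above the Hölder conjugate of $q$'', namely an exponent $s'$ with $1/s'=1-1/q-1/r\in[1/r,1/2]$ so that the extra factor $\rho^\varepsilon_\eta$ can be absorbed; this works out because $q>\frac{d}{2(1-\alpha)}$ is strict and $\delta$ can be taken small, but it should be checked.

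The one genuine gap is the endpoint $\{T\}\times\R^d$ and with it the restriction property, which is part of the statement. The balance \eqref{eq:local_balance_appendix} holds only for almost every $t$, so your construction pins down $\mathcal{D}^\varepsilon$ only on $[0,T)\times\R^d$. You propose to fix the mass on $\{T\}\times\R^d$ via ``the global balance at $t=T$'', but that determines at most the total mass $\mathcal{D}^\varepsilon(\{T\}\times\R^d)$, not its spatial distribution $\mathcal{D}^\varepsilon(\{T\}\times A)$; and defining it instead through the exact local balance at $t=T$ (using the value $\rho^\varepsilon_T$, which is well defined) does not obviously match what an extension $\mathcal{D}^{\varepsilon,*}$ on $[0,T+1]$ assigns to $[0,T]\times\R^d$, because the latter is computed from $\lim_{t_n\downarrow T}\langle|\rho^{\varepsilon,*}_{t_n}|^2,\phi\rangle$ along an a.e.\ set, and only weak continuity of $t\mapsto\rho^\varepsilon_t$ in $L^2_x$ is known, so this right limit need not equal $\langle|\rho^\varepsilon_T|^2,\phi\rangle$. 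This is exactly why the paper constructs the measure on the extended interval $[0,T+1]$ first (with $b,g$ extended by zero) and \emph{defines} $\mathcal{D}^\varepsilon$ as the restriction $\mathcal{D}^{\varepsilon,*,T+1}|_{[0,T]\times\R^d}$; the uniqueness of the restriction is then obtained by comparing the two defect identities for a.e.\ $t\in[0,T+1]$ and letting $t_n\downarrow T$. You should adopt the same device (your whole mollification argument goes through verbatim on $[0,T+1]$), since otherwise the canonical choice of $\mathcal{D}^\varepsilon$ at time $T$, and hence the ``Moreover'' part of the lemma, is not established.
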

\begin{rmk}
Since equation \eqref{eq:local_balance_appendix} only holds for almost every $t \in [0,T]$, it can give no information about how much of the $L^2_x$ norm of $\rho^\varepsilon$ is dissipated exactly at time $t=T$. On the other hand, the restriction property $\mathcal{D}^{\varepsilon,*}|_{[0,T] \times \R^d} = \mathcal{D}^\varepsilon$ $\PP$-almost surely guarantees that $\mathcal{D}^\varepsilon$ takes into account any dissipation happening at time $t=T$.  
\end{rmk}
\begin{proof}
Let us work on the time interval $[0,T+1]$ first.
By similar arguments as those in \cite[Section 2.4]{DrGaPa25+} we have for every $\varepsilon \in (0,1)$ and $\kappa \in (0,1)$
\begin{align*}
\langle|\rho^{\varepsilon,\kappa}_t|^2 , \phi_t \rangle
&=
\langle|\rho_0|^2 , \phi_0 \rangle
+ 
\int_0^t  \langle(b^*_s+g^*_s) |\rho^{\varepsilon,\kappa}_s|^2 , \nabla \phi_s \rangle ds 
+ 
\int_0^t  \langle |\rho^{\varepsilon,\kappa}_s|^2 ,\partial_s \phi_s \rangle ds
\\
&\quad+  \nonumber
\sum_{k \in \N}\varepsilon \int_0^t \langle \sigma_k  |\rho^{\varepsilon,\kappa}_s|^2 , \nabla \phi_s \rangle dW^k_s 
+
\varepsilon^2 \int_0^t \langle|\rho^{\varepsilon,\kappa}_s|^2 ,\Delta \phi_s \rangle ds
-
2 \varepsilon^2\kappa  \int_0^t \langle |\nabla \rho^{\varepsilon,\kappa}_s|^2 ,\phi_s \rangle ds.
\end{align*}
By \autoref{prop_well_posed}, the family $\{{\rho^{\varepsilon,\kappa}}\}_{\kappa \in (0,1)}$ is uniformly bounded in $L^\infty_t L^2_x \cap L^2_t H^{1-\alpha-\delta}_x \cap C^\gamma_t H^{-\sigma}_x$ with arbitrary large probability, hence it is tight in $\mathcal{E}_* \cap  L^2_t \tilde{H}^{1-\alpha-\delta-\lambda}_x$, for any $0 < \lambda \ll 1$.  
Furthermore, the family of the laws of the processes $\{ \varepsilon^2\kappa |\nabla \rho^{\varepsilon,\kappa}|^2\}_{ \kappa \in (0,1)}$ is tight as measures on $[0,T+1] \times \R^d$ by standard energy estimates applied to the processes $\{\rho^{\varepsilon,\kappa} \chi^R\}_{\kappa \in (0,1), R \geq 1}$, where $\chi^R$ is a smooth spatial cutoff vanishing on the set $\{|x| \leq R\}$.
Up to extracting a subsequence $\kappa_n \downarrow 0$ and passing to an auxiliary probability space $(\tilde{\Omega},\tilde{F}, \tilde{\PP})$, we can suppose that $\rho^{\varepsilon,\kappa_n} \to \rho^\varepsilon$ as $\kappa_n \downarrow 0$ $\tilde{\PP}$-almost surely in $\mathcal{E} \cap L^2_t \tilde{H}^{1-\alpha-\delta-\lambda}_x$ and for almost every $t,x$, whereas $
\varepsilon^2\kappa_n |\nabla \rho^{\varepsilon,\kappa_n}_t|^2 dt \to \tilde{\mathcal{D}}^{\varepsilon,*}$ $\tilde{\PP}$-almost surely almost surely as measures in $\mathcal{M}_+([0,T+1] \times \R^d)$. 
Moreover, $\tilde{\PP}$-almost surely we have
\begin{align} \label{eq:properties_D}
\tilde{\mathcal{D}}^{\varepsilon,*} \geq 0,
\qquad
\| \tilde{\mathcal{D}}^{\varepsilon,*}\|_{TV} \leq \| \rho_0 \|_{L^2_x}^2,
\end{align}
since the same holds for $\varepsilon^2\kappa_n |\nabla \rho^{\varepsilon,\kappa_n}_t|^2 dt$. 
Passing to the limit in the equation above, we get \eqref{eq:local_balance_appendix} $\tilde{\PP}$-almost surely for almost every $t \in [0,T+1]$ with measure $\tilde{\mathcal{D}}^{\varepsilon,*}$ instead of $\mathcal{D}^{\varepsilon}$.
Using \eqref{eq:local_balance_appendix} and \eqref{eq:properties_D}, we deduce $\tilde{\PP}$-almost surely for almost every $t \in [0,T+1]$ and $\phi \in C^\infty_c([0,T+1] \times \R^d)$, $\phi \geq 0$:
\begin{align} \label{eq:Riesz}
0
&\leq
-\langle|\rho^\varepsilon_t|^2 , \phi_t \rangle
+
\langle|\rho^\varepsilon_0|^2 , \phi_0 \rangle
+ 
\int_0^t  \langle(b^*_s+g^*_s) |\rho^\varepsilon_s|^2 , \nabla \phi_s \rangle ds
+ 
\int_0^t  \langle |\rho^\varepsilon_s|^2 ,\partial_s \phi_s \rangle ds
\\
&\quad+  \nonumber
\sum_{k \in \N}\varepsilon \int_0^t \langle \sigma_k  |\rho^\varepsilon_s|^2 , \nabla \phi_s \rangle dW^k_s 
+
\varepsilon^2 \int_0^t \langle|\rho^\varepsilon_s|^2 ,\Delta \phi_s \rangle ds
\leq
\| \rho_0 \|_{L^2_x}^2
\sup |\phi|
,
\end{align}
and the same relation transfers back on the original probability space $(\Omega,\mathcal{F},\PP)$.
For a given countable dense $\{\phi^n\} \subset C^\infty_c([0,T+1] \times \R^d)$ we can find a further full-measure subset of Lebesgue points $t \in [0,T+1]$ of all the maps $t \mapsto \langle|\rho^\varepsilon_t|^2, \phi^n_t \rangle$. By standard density arguments, the same full-measure set is of Lebesgue points for all the maps $t \mapsto \langle|\rho^\varepsilon_t|^2, \phi_t \rangle$, with arbitrary $\phi \in C^\infty_c([0,T+1] \times \R^d)$.
By Riesz Representation Theorem and the previous observation, \eqref{eq:Riesz} defines uniquely a random measure $\mathcal{D}^{\varepsilon,*,T+1}$ in $\mathcal{M}_+([0,T+1)\times \R^d) \subset \mathcal{M}_+([0,T+1] \times \R^d)$ such that 
\begin{align} \label{eq:D_T+1}
\langle \langle \mathcal{D}^{\varepsilon,*,T+1} &,\mathbf{1}_{[0,t]}\phi \rangle \rangle
=
-\langle|\rho^\varepsilon_t|^2 , \phi_t \rangle
+
\langle|\rho^\varepsilon_0|^2 , \phi_0 \rangle
+ 
\int_0^t  \langle(b^*_s+g^*_s) |\rho^\varepsilon_s|^2 , \nabla \phi_s \rangle ds
\\
&+  \nonumber 
\int_0^t  \langle |\rho^\varepsilon_s|^2 ,\partial_s \phi_s \rangle ds
+
\sum_{k \in \N}\varepsilon \int_0^t \langle \sigma_k  |\rho^\varepsilon_s|^2 , \nabla \phi_s \rangle dW^k_s 
+
\varepsilon^2 \int_0^t \langle|\rho^\varepsilon_s|^2 ,\Delta \phi_s \rangle ds,
\end{align}
$\PP$-almost surely for almost every $t \in [0,T+1]$. 
Even if not needed in our work, we notice that $\mathcal{D}^{\varepsilon,*,T+1}$ and $\tilde{\mathcal{D}}^{\varepsilon,*} = \lim_{n \uparrow\infty}
\varepsilon^2\kappa_n |\nabla \rho^{\varepsilon,\kappa_n}_t|^2 dt $ have the same law.

In order to conclude the proof, we define $\mathcal{D}^{\varepsilon} := \mathcal{D}^{\varepsilon,*,T+1}|_{[0,T] \times \R^d}$, which clearly satisfies \eqref{eq:local_balance_appendix} by \eqref{eq:D_T+1}, the relation $b^*_t = b_t$, $g^*_t = g_t$ for almost every $t \in [0,T]$, and uniqueness of solutions to \eqref{eq:rho_general}. 
It only remains to prove that, if $\mathcal{D}^{\varepsilon,*} : \O \to \mathcal{M}_+([0,T+1]\times \R^d)$ is such that \eqref{eq:local_balance_appendix} holds $\PP$-almost surely for almost every $t \in [0,T+1]$ and $\phi \in C^\infty_c([0,T+1] \times \R^d)$, then it holds $\mathcal{D}^{\varepsilon,*}|_{[0,T] \times \R^d} = \mathcal{D}^\varepsilon$ $\PP$-almost surely.
Comparing \eqref{eq:local_balance_appendix} for $\mathcal{D}^{\varepsilon,*}$ and \eqref{eq:D_T+1} for $\mathcal{D}^{\varepsilon,*,T+1}$, we deduce that for $\PP$-almost surely for almost every $t \in [0,T+1]$ and $\phi \in C^\infty_c([0,T+1] \times \R^d)$
\begin{align*}
\langle \langle \mathcal{D}^{\varepsilon,*} ,\mathbf{1}_{[0,t]}\phi \rangle \rangle
=
\langle \langle \mathcal{D}^{\varepsilon,*,T+1} ,\mathbf{1}_{[0,t]}\phi \rangle \rangle.
\end{align*}
Therefore, letting $t_n \downarrow T$ with $\{t_n\}_{n \in \N}$ in a full-measure set such that the previous identity holds, we deduce that the restrictions $\mathcal{D}^{\varepsilon,*}|_{[0,T] \times \R^d}$ and $\mathcal{D}^{\varepsilon,*,T+1}|_{[0,T] \times \R^d} = \mathcal{D}^{\varepsilon}$ coincide $\PP$-almost surely.
This concludes the proof. 
\end{proof} 

\bibliography{biblio}{}
\bibliographystyle{alpha}

\end{document}